\newcommand{\eq}{\begin{equation}}
\newcommand{\en}{\end{equation}}
\newcommand{\lb}[1]{\label{#1}}
\newcommand{\Rmnum}[1]{\expandafter\@slowromancap\romannumeral #1@}
\newtheorem{thm}{Theorem}[section]
\newtheorem{prop}[thm]{Proposition}
\newtheorem{lem}[thm]{Lemma}
\newtheorem{cor}[thm]{Corollary}
\theoremstyle{definition}
\newtheorem{remark}[thm]{Remark}
\newtheorem{defn}[thm]{Definition}
\newtheorem{hypothesis}[thm]{Hypothesis}
\newtheorem{notation}[thm]{Notation}
\numberwithin{equation}{section}
\newcommand{\reals}{\mathbb{R}}
\newcommand{\8}{\infty}
\newcommand{\ra}{\rightarrow}
\renewcommand{\P}{\mathbb{P}}
\newcommand{\Q}{\mathbb{Q}}
\newcommand{\E}{\mathbb{E}}
\newcommand{\eps}{\varepsilon}
\renewcommand{\and}{ \quad \text{and} \quad }
\begin{document}

\title[Lipschitz minorants]{Lipschitz minorants of \\
Brownian Motion and L\'evy processes}

\author{Joshua Abramson}
\address{J.~Abramson --- \textit{email:} josh@stat.berkeley.edu; \textit{address:} Department of Statistics, University of California, 367 Evans Hall, Berkeley, CA 94720-3860}

\author{Steven N. Evans}
\thanks{S.N.E. was supported in part by N.S.F.\ Grant DMS-0907630} 
\address{S.~N.~Evans --- \textit{email:} evans@stat.berkeley.edu; \textit{address:} Department of Statistics, University of California, 367 Evans Hall, Berkeley, CA 94720-3860}

\subjclass[2010]{60G51, 60G55, 60G17, 60J65}

\keywords{Fluctuation theory, regenerative set, subordinator, 
last exit decomposition, abrupt process, global minimum, 
$c$-convexity, Pasch-Hausdorff envelope}

\begin{abstract} 
For $\alpha > 0$, the $\alpha$-Lipschitz minorant of a function $f:
\reals \to \reals$ is the greatest function $m : \reals \to
\reals$ such that $m \leq f$ and $|m(s)-m(t)| \le \alpha |s-t|$ 
for all $s,t \in \reals$,  should
such a function exist.  
If $X=(X_t)_{t \in \reals}$ is a real-valued L\'evy process
that is not pure linear drift with slope $\pm \alpha$, then
the sample paths of $X$ have an $\alpha$-Lipschitz minorant
almost surely if and only if $| \E[X_1] | <
\alpha$.  Denoting the minorant by $M$, 
we investigate properties of the random closed set
$\mathcal{Z} := \{ t \in \reals : M_t = X_t \wedge X_{t-} \}$, which,
since it is regenerative and stationary, has the distribution of the
closed range of some subordinator ``made stationary'' in a suitable sense.  
We give conditions for the contact set $\mathcal{Z}$
to be countable or to  have zero Lebesgue measure, and
we obtain formulas that characterize the L\'evy measure
of the associated subordinator.
We study the limit of $\mathcal{Z}$ as $\alpha \ra \infty$ and
find for the so-called abrupt L\'evy processes introduced by Vigon
that this limit is the set of local infima of $X$.
When $X$ is a Brownian motion with drift $\beta$ such that $|\beta| <
\alpha$, we calculate explicitly the densities of various random
variables related to the  minorant.  
\end{abstract}

\maketitle

\section{Introduction}
\label{sec:intro}

A function $g: \reals \to \reals$ is $\alpha$-Lipschitz for some $\alpha > 0$ 
if $|g(s) - g(t)| \le \alpha |s - t|$ for all $s,t \in \reals$.
If $\Gamma$ is a set of $\alpha$-Lipschitz functions from $\reals$ to
$\reals$ such that
$\sup\{g(t_0) : g \in \Gamma\} < \infty$ for some $t_0 \in \reals$,
then the function $g^*: \reals \to \reals$ defined by
$g^*(t) = \sup\{g(t) : g \in \Gamma\}$, $t \in \reals$, is $\alpha$-Lipschitz.
Also, if $f: \reals \to \reals$ is an arbitrary function, then
the set of $\alpha$-Lipschitz functions dominated by $f$ is non-empty if
and only if $f$ is bounded below on compact intervals and satisfies
$\liminf_{t \to -\infty} f(t) - \alpha t > - \infty$ and 
$\liminf_{t \to +\infty} f(t) + \alpha t > - \infty$.
Therefore, in this case there is a unique greatest $\alpha$-Lipschitz function
dominated by $f$, and  we call this function the
{\em $\alpha$-Lipschitz minorant} of $f$.  

Denoting the
$\alpha$-Lipschitz minorant of $f$ by $m$, an explicit formula for $m$ is
\eq
\lb{mformula}
\begin{split}
m(t) & = \sup \{ h \in \reals : h - \alpha|t-s| \leq f(s)  \text{ for all } s \in \reals \} \\
& = \inf \{f(s) + \alpha |t-s| : s \in \reals\}. \\
\end{split}
\en
For the sake of completeness, we present a proof of these equalities
in Lemma~\ref{L:minorant_explicit}.
The first equality says that for each $t \in \reals$ we construct
$m(t)$ by considering the set of
``tent'' functions $s \mapsto h - \alpha|t-s|$ that have a peak of height
$h$ at the position $t$ and are dominated by $f$, and then taking
the supremum of those peak heights -- see Figure~\ref{fig:height}.  
The second equality is simply
a rephrasing of the first.

The property that the pointwise supremum of a suitable family of $\alpha$-Lipschitz
functions is also $\alpha$-Lipschitz is reminiscent of the fact
that the pointwise supremum of a suitable family of convex functions is also
convex, and so the notion of the $\alpha$-Lipschitz minorant of a function
is analogous to that of the convex minorant.  Indeed, there is a well-developed
theory of abstract or generalized convexity that subsumes both of these concepts
and is used widely in nonlinear optimization, particularly
in the theory of optimal mass transport -- see 
\cite{MR0348591, MR0452694, MR955448, 
 MR2027382}, Section 3.3 of \cite{MR1619170} and
Chapter 5 of \cite{MR2459454}. Lipschitz minorants have also been
studied in convex analysis for their Lipschitz regularization
and Lipschitz extension properties, and in this area are known
as Pasch-Hausdorff envelopes
\cite{MR593233,MR600082, MR1491362,MR2496900}.

Furthermore, the second expression in \eqref{mformula}
can be thought of as producing a function analogous to
the smoothing of the function $f$ by an integral kernel (that is, a function of the
form $t \mapsto \int_\reals K(|t-s|) f(s) \, ds$ for some suitable kernel $K: \reals \to \reals$)
where one has taken the ``min-plus'' or ``tropical'' point of view
and replaced the algebraic operations of $+$ and $\times$
by, respectively, $\wedge$ and $+$, so that integrals are replaced by infima.
Note that if $f$ is a continuous function that possesses an 
$\alpha_0$-Lipschitz minorant for some $\alpha_0$ (and hence an 
$\alpha$-Lipschitz minorant for all $\alpha \ge \alpha_0$), then the
$\alpha$-Lipschitz minorants converge pointwise monotonically up to $f$ as $\alpha \to +\infty$.
Standard methods in optimization theory involve approximating
a general function by a Lipschitz function and then determining approximate optima
of the original function by finding optima of its Lipschitz approximant
\cite{MR1168181, MR1168182, MR1274246, MR2217473}.

We investigate here the stochastic process $(M_t)_{t \in \reals}$
obtained by taking the $\alpha$-Lipschitz minorant of the sample path of a
real-valued L\'evy process $X=(X_t)_{t \in \reals}$ for which the
$\alpha$-Lipschitz minorant almost surely exists, a condition that turns
out to be equivalent to $| \E [X_1] | < \alpha$ when $X_0 = 0$ (excluding
the trivial case where $X_t = \pm \alpha t$ for $t \in \reals$)
-- see Proposition~\ref{P:existence}.  See Figure~\ref{fig:bmexample}
for an example of the minorant of a Brownian sample path.
Our original motivation for this undertaking was the 
abovementioned analogy between
$\alpha$-Lipschitz minorants and convex minorants
and the rich (and growing) literature on convex minorants of
Brownian motion and L\'evy processes in general 
\cite{MR714964, MR733673, MR770946, MR1145458, MR1747095, MR1891744,suidan,ECP2011-38,pitmanbravo}.

In particular, we study
properties of the {\em contact set}
$\mathcal{Z} := \{ t \in \reals : M_t = X_t \wedge X_{t-} \}$.  
This random set is clearly stationary and,
as we show in Theorem~\ref{thm:Zregen}, 
it is also regenerative.  Consequently, its
distribution is that of the closed range of a subordinator ``made
stationary'' in a suitable manner.  For a broad class
of L\'evy processes we are able to identify the associated subordinator
in the sense that we can determined its Laplace exponent
-- see Theorem~\ref{thm:bigthm}.

We then consider the Lebesgue measure of the random set $\mathcal{Z}$
in Theorem~\ref{thm:Zmeasure} and Remark~\ref{R:Zmeasure}. 
If the paths of the L\'evy process have either
unbounded variation or bounded variation 
with drift $d$ satisfying $|d |> \alpha$, then
the associated subordinator has zero drift, and hence 
the random set $\mathcal{Z}$ has zero
Lebesgue measure almost surely. Conversely, if the paths of the L\'evy process 
have bounded variation and
drift $d$ satisfying $|d| < \alpha$, then the associated
subordinator has positive
drift, and hence the random set $\mathcal{Z}$ has infinite Lebesgue measure
almost surely. In Theorem~\ref{thm:newLambda} we give conditions under
which the L\'evy measure of the subordinator associated to the set
$\mathcal{Z}$ has finite total mass, which implies that $\mathcal{Z}$
is a discrete set in the case where it has zero Lebesgue measure. Using
the methodologies developed to investigate the Lebesgue measure of
$\mathcal{Z}$ we give in Theorem~\ref{thm:localExtrema} an interesting
result relating to the local behavior of a L\'evy process at its
local extrema.

If for the moment we write $\mathcal{Z}_\alpha$ instead of $\mathcal{Z}$
to stress the dependence on $\alpha$, then it is clear
that $\mathcal{Z}_{\alpha'} \subseteq \mathcal{Z}_{\alpha''}$ for $\alpha' \le \alpha''$.
We find in Theorem~\ref{thm:Zlimit}
that if the L\'evy process is {\em abrupt}, that is, its paths
have unbounded variation and ``sharp'' local extrema in a suitable sense
(see Definition~\ref{def:abrupt} for a precise definition), then the set
$\bigcup_\alpha \mathcal{Z}_\alpha$ 
is almost surely the set of local infima of 
the L\'evy process.

%%%%% example of minorant of BM
\begin{figure}%[h]
\begin{center}
\includegraphics[width=\textwidth]{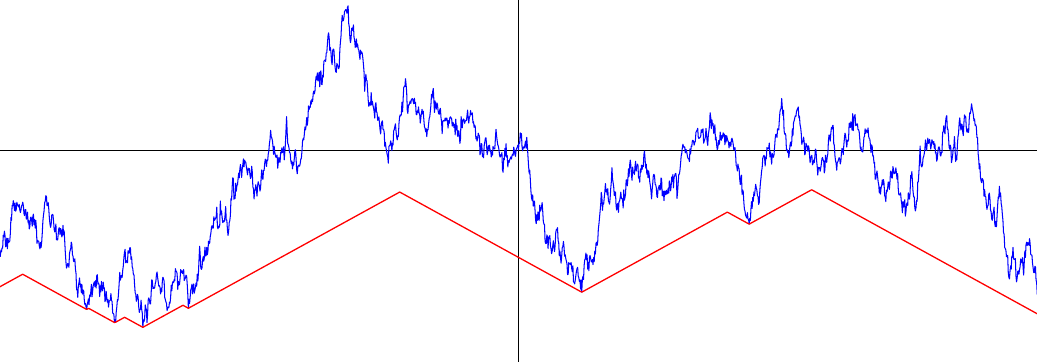}
\end{center}
\caption{A typical Brownian motion sample path and its associated Lipschitz minorant.}
\label{fig:bmexample}
\end{figure}

Lastly, when the L\'evy process is a Brownian motion
with drift, we can compute explicitly the distributions of a number
of functionals of the  $\alpha$-Lipschitz minorant process.
In order to describe these results, we first note
that it follows from Lemma~\ref{L:sawtooth} below 
that the graph of the $\alpha$-Lipschitz
minorant $M$ over one of the connected components
of the complement of $\mathcal{Z}$ is almost surely a ``sawtooth''
that consists of a line of slope $+\alpha$
followed by a line of slope $-\alpha$.  
Set 
$G := \sup \{ t < 0 : t \in \mathcal{Z} \}$, 
$D := \inf \{ t > 0 : t \in \mathcal{Z} \}$, and put $K: = D-G$.
Let $T$ be the unique $t \in [G,D]$ such that $M(t) = \max\{M(s) : s \in [G,D]\}$.
That is, $T$ is place where the peak of the sawtooth occurs.  Further,
let $H := X_T - M_T$ be the distance between the Brownian path and
the $\alpha$-Lipschitz minorant at the time where the peak occurs.

The following theorem summarizes a series of
results that we establish in  Section~\ref{sec:bm}.

\begin{thm}
\label{thm:bmsummary}
Suppose that $X$ is a Brownian motion with drift $\beta$, where
$|\beta|<\alpha$. Then, the following hold.
\begin{itemize}
\item[(a)]
The L\'evy measure $\Lambda$ of the subordinator associated to the
contact set $\mathcal{Z}$ has finite mass and 
is characterized up to a scalar multiple by
\[
\frac{\int_{\reals_+} 1 - e^{-\theta x} \, \Lambda(dx)}
{\int_{\reals_+} x \, \Lambda(dx)}
=  
\frac{4 (\alpha^2 - \beta^2) \theta}
{
\left(\sqrt{2 \theta + (\alpha - \beta)^2} + \alpha - \beta \right)
\left(\sqrt{2 \theta + (\alpha + \beta)^2} + \alpha + \beta \right)
}
\] 
\item[(b)]
When $\beta = 0$ the measure $\Lambda$ is absolutely continuous
with respect to Lebesgue measure with 
\[
\frac{\Lambda(d x)}{\Lambda(\reals_+)} 
 =
\frac{2 \alpha }{ \sqrt{2 \pi} }
\left[ x^{-\frac{1}{2}} e^{- \frac{\alpha^2 x}{2}} - 2 \alpha^2 \Phi(- \alpha x^{\frac{1}{2}}) \right] \, dx,
\] 
where $\Phi$ is the standard normal cumulative distribution function
(that is, $\Phi(z) := \int_{-\infty}^{z} \frac{1}{\sqrt{2 \pi}} e^{-\frac{t^2}{2}} \, dt$).
\item[(c)] 
The distribution of $T$ is characterized by
\[
\E[e^{-\theta T}] = 8 \alpha (\alpha^2-\beta^2) \frac{1}{\theta}
\left( \frac{1}{ \sqrt{(\alpha+\beta)^2 - 2 \theta} + 3 \alpha - \beta
  } - \frac{1}{ \sqrt{(\alpha-\beta)^2 + 2 \theta} + 3 \alpha + \beta
  } \right)
\]
for $- \frac{(\alpha-\beta)^2}{2} \le \theta \le
\frac{(\alpha+\beta)^2}{2} $.  Also,
\[
\P\{T>0\} = \frac{1}{2} \left( 1 + \frac{\beta}{\alpha} \right).
\]
\item[(d)]
The random variable
$H$ has a ${\mathrm{Gamma}}(2, 4 \alpha)$ distribution; that is,
the distribution of $H$ is absolutely continuous with
respect to Lebesgue measure with density
$h \mapsto (4 \alpha)^2 h e^{-4 \alpha h}$, $h \ge 0$.
\end{itemize}
\end{thm}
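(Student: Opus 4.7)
The plan is to prove the four assertions sequentially, leaning on Theorem~\ref{thm:bigthm} for (a), a Laplace inversion for (b), and the sawtooth geometry of $M$ on the excursion intervals of $\mathcal{Z}$ from Lemma~\ref{L:sawtooth}, combined with Palm/excursion theory for the stationary regenerative set $\mathcal{Z}$, for (c) and (d).

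For (a) I would specialize Theorem~\ref{thm:bigthm} to $X_t = B_t + \beta t$ with $|\beta|<\alpha$. The tilted processes $X_t - \alpha t$ and $-X_t - \alpha t$ are Brownian motions with negative drifts $\beta-\alpha$ and $-\beta-\alpha$, for which the Wiener--Hopf factorization is classical; in particular, for a Brownian motion with drift $-\mu$ ($\mu>0$) the relevant ascending ladder-height subordinator has Laplace exponent $\sqrt{2\theta+\mu^2}+\mu$ (up to normalization). Substituting $\mu=\alpha-\beta$ and $\mu=\alpha+\beta$ into the general formula of Theorem~\ref{thm:bigthm} and simplifying algebraically reproduces the right-hand side of (a), with the prefactor $4(\alpha^2-\beta^2)\theta$ emerging from the normalization $\int x\,\Lambda(dx)$. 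For (b), set $\beta=0$ in (a); rationalizing yields $\alpha^2(\sqrt{2\theta+\alpha^2}-\alpha)^2/\theta$. Expanding and using the elementary Laplace identity $\int_0^\infty e^{-\theta x}x^{-1/2}e^{-\alpha^2 x/2}\,dx=\sqrt{2\pi/(2\theta+\alpha^2)}$ together with a companion identity for $\Phi(-\alpha\sqrt{x})$ (derived by integration by parts from $\frac{d}{dx}\Phi(-\alpha\sqrt{x})=-\tfrac{\alpha}{2\sqrt{x}}\phi(\alpha\sqrt{x})$), one inverts term by term, or, as a cleaner alternative, guesses the density and verifies it by direct Laplace-transform computation.

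For (c), the formula $\P\{T>0\}=(1+\beta/\alpha)/2$ is a short ergodic argument: $M_t/t\to\beta$ almost surely as $t\to\infty$, while by the sawtooth structure $M$ has slope $+\alpha$ on a fraction $p$ of time and slope $-\alpha$ on a fraction $1-p$, so $\beta=\alpha(2p-1)$ forces $p=(1+\beta/\alpha)/2$; since by stationarity $\{T>0\}$ is the event that $0$ lies on an ascending segment, its probability is exactly $p$. For the Laplace transform of $T$, decompose according to the sign of $T$ and apply Palm theory for $\mathcal{Z}$: given $T>0$ the pair $(|G|,T)$ is controlled by a Brownian first-passage problem above the line through $(G,X_G)$ of slope $+\alpha$ (effectively a Brownian motion with drift $\beta-\alpha$ staying above the origin until the minorant's slope reverses), and given $T<0$ a symmetric construction with drift $-\beta-\alpha$ applies on the descending side. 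Each side contributes one of the Laplace factors $1/(\sqrt{(\alpha\mp\beta)^2\pm 2\theta}+3\alpha\mp\beta)$, and the prefactor $8\alpha(\alpha^2-\beta^2)/\theta$ arises from the size-biasing of the straddling excursion (via the subordinator identified in (a)) together with the normalizing first-passage mass.

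For (d), apply the same decomposition to $H=X_T-M_T$: by strong Markov at $T$, $H$ splits as $H^{+}+H^{-}$, where $H^{+}$ and $H^{-}$ are the heights accumulated by $X$ against the two slope-$\pm\alpha$ lines on either side of the peak. A short calculation, reflecting the Brownian pre- and post-peak trajectories against lines of slope $\pm\alpha$, identifies $H^{+}$ and $H^{-}$ as independent $\mathrm{Exp}(4\alpha)$ random variables, so $H\sim\mathrm{Gamma}(2,4\alpha)$. The principal obstacle is precisely this Palm/excursion decomposition at $T$ underlying (c) and (d): pinning down rigorously the conditional law, given the sign of $T$ and the length of the straddling excursion, of the two halves of the sawtooth as Brownian first-passage-type paths with the correct drift and boundary slopes. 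Once that structural description is made precise, the remaining steps reduce to classical Laplace-transform calculations for Brownian hitting times and overshoots.
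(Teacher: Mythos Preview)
Your plan diverges from the paper in a structural way: the paper does not reach parts (c) and (d) via Palm/excursion theory or a strong Markov property at $T$.  Instead it proves a single concrete fact, Proposition~\ref{prop:tlrdensity2}, giving the joint density
\[
(\tau,\lambda,\rho,\eta)\longmapsto 2\alpha\, f^-(\lambda,\eta)\,f^+(\rho,\eta),
\qquad \lambda,\rho>0,\ \tau-\lambda<0<\tau+\rho,\ \eta<0,
\]
for $(T,L,R,-H)$, where $f^\pm$ are the (explicit, Brownian) joint densities of the time and value of the global infimum of $(\pm X_t+\alpha t)_{t\ge0}$.  All four statements then reduce to one-variable integrals with these densities.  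For (a) the paper actually bypasses the general formula of Theorem~\ref{thm:bigthm} and uses the intermediate identity \eqref{eq:Klaplace1} (or, equivalently, the decomposition $D=S+\tilde T$ with $S,\tilde T$ independent and Laplace transforms read off from the handbook); trying to push Brownian Wiener--Hopf data through the $z$-integral of Theorem~\ref{thm:bigthm} is doable in principle but is a detour.  For (b) the paper computes the density of $K$ directly from the explicit $f^\pm$ and then removes the size-bias, rather than inverting the Laplace transform in (a).

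Your ergodic argument for $\P\{T>0\}=\tfrac12(1+\beta/\alpha)$ is correct and is a nice alternative to the paper's route (which notes $\{T>0\}=\{I^+>I^-\}$ for the two one-sided infima, independent exponentials with rates $2(\alpha\pm\beta)$).  The real gap is in (d): the claim that $H=H^{+}+H^{-}$ with independent $\mathrm{Exp}(4\alpha)$ summands is not justified, and ``strong Markov at $T$'' is not available because $T$ is not a stopping time.  The paper's computation shows why the $\mathrm{Gamma}(2,4\alpha)$ appears for a different reason: under the ``Palm'' law at a peak (heuristically, conditioning $0$ to be a peak time), $H$ is the common value of the two independent one-sided infima, hence $\mathrm{Exp}(4\alpha)$; under the stationary law this is size-biased by $L+R$, whose conditional mean given $H=h$ is proportional to $h$, turning $\mathrm{Exp}(4\alpha)$ into $\mathrm{Gamma}(2,4\alpha)$.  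So the right decomposition to look for is not an additive split of $H$ but the factorized density of $(L,R,H)$, which is exactly what Proposition~\ref{prop:tlrdensity2} supplies and from which the Laplace transform of $T$ in (c) also follows by integration.
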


The rest of this article is organized as follows. In
Section~\ref{sec:main} we provide precise definitions and give some
preliminary results relating to the nature of the contact set. 
In Section~\ref{sec:identify} we describe the subordinator associated
with the contact set, and in Section~\ref{sec:limit} we describe the
limit of the contact set as $\alpha \ra \infty$.
In order to prove Theorem~\ref{thm:bigthm} we need some preliminary
results relating to the future infimum of a L\'evy process, which we
give in Section~\ref{sec:prelim}, and then we prove
Theorem~\ref{thm:bigthm} in Section~\ref{sec:proofs}.
In Section~\ref{sec:bm} we cover the special case when $X$ is a two
sided Brownian motion with drift in detail.
Finally, in Section~\ref{sec:facts} we give some basic facts about the
$\alpha$-Lipschitz minorant of a function that are helpful throughout
the paper.

\section{Definitions and Preliminary Results}
\label{sec:main}

\subsection{Basic definitions}
\label{sec:defs}

Let $X = (X_t)_{t \in \reals}$ be a real-valued L\'evy process.  
That is, 
$X$ has {\em c\`adl\`ag} sample paths, $X_0=0$, 
and $X_t-X_s$ is independent of 
$\{X_r : r \le s\}$ with the same distribution as $X_{t-s}$ for
$s,t \in \reals$ with $s<t$. 

The L\'evy-Khintchine
formula says that for $t \geq 0$ the characteristic function of $X_t$ is given by
$\E[e^{i \theta X_t}] = e^{-t \Psi(\theta)}$ for $\theta \in \reals$,
where
\[
\Psi(\theta) = - i a \theta + \frac{1}{2} \sigma^2 \theta^2 +
\int_\reals(1 - e^{i \theta x} + i \theta x 1_{ \{ |x| < 1 \} } )
\, \Pi(dx) 
\]
with $a \in \reals$, $\sigma \in \reals_+$, and $\Pi$ a $\sigma$-finite
measure concentrated on $\reals \setminus \{0\}$ 
satisfying $\int_\reals (1 \wedge x^2) \, \Pi(dx) <
\infty$. We call $\sigma^2$ the {\em infinitesimal variance} of the
Brownian component of $X$ and $\Pi$ the
{\em L\'evy measure} of $X$. 

The sample paths of $X$ have bounded variation almost surely
if and only if $\sigma = 0$ and
$\int_\reals (1 \wedge |x| ) \, \Pi(dx) < \infty$. In this case $\Psi$
can be rewritten as
\[
\Psi(\theta) = - i d \theta + \int_\reals (1-e^{i \theta x} ) \, \Pi(dx).
\]
We call $d \in \reals$ the drift coefficient.
For full details of these definitions see \cite{bertoin}.

We will often need the result of Shtatland \cite{shtatland}
that if $X$ has paths of bounded variation with drift $d$, then
\begin{equation}
\label{eq:shtatland_statement}
\lim_{t \downarrow 0} t^{-1} X_t = d \quad \text{a.s.}
\end{equation}
The counterpart of Shtatland's result when $X$
has paths of unbounded variation is Rogozin's result
\begin{equation}
\label{eq:Rogozin_small_time}
\liminf_{t \downarrow 0} t^{-1} X_t = - \infty
\and
\limsup_{t \downarrow 0} t^{-1} X_t = + \infty \quad \text{a.s.}
\end{equation}

For the sake of reference, we also record here a regularity
criterion due to Rogozin \cite{rogozin} 
(see also \cite[Proposition
VI.11]{bertoin}) that we use frequently:
\begin{equation}
\label{eq:rogozin_regularity}
\begin{split}
& \text{zero is regular for $(-\infty,0]$} \\
& \qquad \Longleftrightarrow \\
& \int_0^1 t^{-1} \P\{X_t \le 0\} \, dt = \infty. \\
\end{split}
\end{equation}
Of course, \eqref{eq:rogozin_regularity} has an obvious
analogue that determines when zero is regular for $[0,\infty)$.

\subsection{Existence of a minorant}

\begin{prop}
\label{P:existence}
Let $X$ be a L\'evy process.
The $\alpha$-Lipschitz minorant of $X$ exists almost surely if and
only if  either $\sigma = 0$, $\Pi=0$ and $|d| = \alpha$ (equivalently,
$X_t = \alpha t$ for all $t \in \reals$ or 
$X_t = -\alpha t$ for all $t \in \reals$), or
$\E[|X_1|] < \infty$ and $| \E [X_1]| < \alpha$.
\end{prop}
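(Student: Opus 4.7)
The plan is to convert the existence-of-minorant question into a pair of deterministic tail conditions on the sample path, and then settle those using the strong law of large numbers (SLLN) for L\'evy processes together with an oscillation result for centered L\'evy processes.

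First, the deterministic criterion recalled in the introduction (and made precise in Lemma~\ref{L:minorant_explicit}) says that the $\alpha$-Lipschitz minorant of $f$ exists if and only if $f$ is bounded below on compact intervals and
\[
\liminf_{t \to -\infty}(f(t) - \alpha t) > -\infty \and \liminf_{t \to +\infty}(f(t) + \alpha t) > -\infty.
\]
Since c\`adl\`ag paths are automatically locally bounded, the problem reduces to the almost sure behaviour of these two tail liminfs on the sample path of $X$. Recall also that on the negative half-line the two-sided process $X$ may be read as minus an independent copy of $X$ run forward in time, so the behaviour at $t \to -\infty$ is dual to that at $t \to +\infty$.

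For the ``if'' direction, the degenerate case $X_t \equiv \pm\alpha t$ is immediate since then $X$ is its own minorant. Otherwise assume $\E|X_1| < \infty$ with $\mu := \E[X_1]$ satisfying $|\mu| < \alpha$. The SLLN for L\'evy processes gives $X_t/t \to \mu$ almost surely as $t \to \pm\infty$, so $X_t + \alpha t = t(X_t/t + \alpha) \to +\infty$ as $t \to +\infty$ and $X_t - \alpha t = t(X_t/t - \alpha) \to +\infty$ as $t \to -\infty$ (because $\mu + \alpha > 0$ and $\mu - \alpha < 0$), so both tail conditions hold.

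For the ``only if'' direction, take $X$ not of the form $\pm\alpha t$ and negate the stated alternatives to obtain three cases. (i) If $\E|X_1| < \infty$ and $|\mu| > \alpha$, the same SLLN calculation drives one of $X_t \pm \alpha t$ to $-\infty$ along integer times, defeating the corresponding condition. (ii) If $\E|X_1| < \infty$ and $|\mu| = \alpha$, say $\mu = \alpha$, then $Y_t := X_t - \alpha t$ is a nontrivial integrable L\'evy process with $\E[Y_1] = 0$; the classical oscillation theorem (Chung--Fuchs applied to the embedded random walk $(Y_n)_{n \ge 1}$) gives $\liminf_n Y_n = -\infty$ almost surely, and the time-reversal remark transfers this to $\liminf_{t \to -\infty}(X_t - \alpha t) = -\infty$, killing the left-tail condition. (iii) If $\E|X_1| = \infty$ and $\E X_1^- = \infty$, the extended SLLN (or direct Borel--Cantelli on $\P(\xi_1 \le -Kn)$) yields $\liminf_n X_n/n = -\infty$, so the right-tail condition fails; if instead $\E X_1^- < \infty$ and $\E X_1^+ = \infty$, then $X_n/n \to +\infty$ and the symmetric argument via time reversal kills the left-tail condition.

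The main obstacle is the boundary case $|\mu| = \alpha$: here the SLLN is uninformative and one needs the fluctuation-theoretic fact that a nontrivial integrable mean-zero L\'evy process has $\limsup = +\infty$ and $\liminf = -\infty$, which is precisely where Chung--Fuchs enters. The infinite-mean cases are routine once one pairs the extended SLLN with time reversal, and the strict-inequality cases are immediate from the ordinary SLLN.
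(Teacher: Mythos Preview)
Your overall strategy matches the paper's: reduce to the tail conditions \eqref{conditions}, dispatch the ``if'' direction via the SLLN, and for ``only if'' treat separately the cases $|\mu| > \alpha$, $|\mu| = \alpha$, and $\E|X_1| = \infty$. Your handling of the first two cases is correct; in particular, for the boundary case $|\mu| = \alpha$ you invoke oscillation of a nontrivial mean-zero process, which is exactly what the paper does (citing \cite[Corollary 9.14]{MR1876169} for the same fact).

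The gap is in case (iii). You assert that $\E X_1^- = \infty$ alone forces $\liminf_n X_n/n = -\infty$, and suggest this follows from Borel--Cantelli on the events $\{\xi_n \le -Kn\}$. This inference fails when $\E X_1^+ = \infty$ as well: Borel--Cantelli only tells you that individual increments $\xi_n$ drop below $-Kn$ infinitely often, but $X_{n-1}$ may simultaneously be so large that $X_n/n$ stays bounded below. Erickson's trichotomy (which the paper invokes as \cite[Chapter 4, Theorem 15]{doney}) shows there exist random walks with $\E X_1^+ = \E X_1^- = \infty$ for which $X_n/n \to +\infty$ almost surely; in such an example your right-tail condition actually holds, and it is the \emph{left}-tail condition $\limsup_{t \to +\infty}(X_t - \alpha t) < \infty$ that fails. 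So your case split misidentifies which condition breaks. You can repair this either by citing Erickson's result as the paper does, or by the more elementary observation that $\E|X_1| = \infty$ implies $\limsup_n |X_n|/n = \infty$ almost surely (this \emph{does} follow from Borel--Cantelli: $|\xi_n| > Kn$ i.o.\ forces $\max(|X_{n-1}|,|X_n|) > Kn/2$ i.o.), and then noting that whichever of $\limsup X_n/n = +\infty$ or $\liminf X_n/n = -\infty$ occurs, one of the two tail conditions in \eqref{conditions} is violated.
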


\begin{proof}
As we remarked in the Introduction,
the $\alpha$-Lipschitz minorant of a function $f: \reals \to
\reals$ exists if and only if $f$ is bounded below on
compact intervals and satisfies
$\liminf_{t \to -\infty} f(t) - \alpha t > - \infty$ and 
$\liminf_{t \to +\infty} f(t) + \alpha t > - \infty$.

Since the sample paths of a L\'evy process are
almost surely bounded on compact intervals, we need necessary
and sufficient conditions for 
$\liminf_{t \to -\infty} X_t - \alpha t > - \infty$ and 
$\liminf_{t \to +\infty} X_t + \alpha t > - \infty$
to hold almost surely.
This is equivalent to requiring that
\eq
\lb{conditions}
\limsup_{t \ra +\infty} X_t - \alpha t < +\infty \quad \text{a.s.} 
\qquad \text{and} \qquad  
\liminf_{t \ra +\infty}  X_t + \alpha t > -\infty \quad \text{a.s.}
\en

It is obvious that 
that the two conditions in \eqref{conditions} hold if $\sigma = 0$, $\Pi=0$ and $|d| = \alpha$.
It is clear from the strong law of large numbers that they also hold
if $\E[|X_1|] < \infty$ and $| \E [X_1]| < \alpha$.

Consider the converse.
Writing $x^+ := x \vee 0$ and $x^- := - (x \wedge 0)$ for $x \in \reals$,
the strong law of large numbers precludes any case where either 
$\E [X_1^+] = +\infty$ and $\E [X_1^-] < + \infty$ 
or 
$\E [X_1^+] < +\infty$ and $\E [X_1^-] = +\infty$. 
A result of Erickson \cite[Chapter 4, Theorem 15]{doney}
rules out the possibility $\E [X_1^+] = \E [X_1^-] = +\infty$,
and so $\E[|X_1|] < \infty$.  It now follows from
the strong law of large numbers that $\lim_{t \to \infty} t^{-1} X_t = \E[X_1]$
and so $|\E[X_1]| \le \alpha$.  Suppose that $X_t$ is non-degenerate for $t \ne 0$ (that is,
that $\sigma \ne 0$ or $\Pi \ne 0$).  Then,
$\limsup_{t \to \infty} X_t - \E[X_1] t = +\infty$ a.s.
and
$\liminf_{t \to \infty} X_t - \E[X_1] t = -\infty$ a.s.
(see, for example, \cite[Corollary 9.14]{MR1876169}),
and so $|\E[X_1]| < \alpha$ in this case. 
\end{proof}

\begin{hypothesis}
\label{H:standing}
From now on we assume, unless we note otherwise,
that the L\'evy process $X = (X_t)_{t \in \reals}$
has the properties:
\begin{itemize}
\item
$X_0 = 0$;
\item
$X_t$ is non-degenerate for $t \ne 0$;
\item
$\E[|X_1|]<\infty$;
\item 
$| \E [X_1]| < \alpha$. 
\end{itemize}
\end{hypothesis}

\begin{notation}
As in the Introduction, let 
$M = (M_t)_{t \in \reals}$ be the $\alpha$-Lipschitz
minorant of $X$.  Put
$\mathcal{Z} = \{ t \in \reals : M_t = X_t \wedge X_{t-} \}$.
\end{notation}

\subsection{The contact set is regenerative}

It follows fairly directly from our standing assumptions 
Hypothesis~\ref{H:standing} that the random set $\mathcal{Z}$
is almost surely unbounded above and below. (Alternatively,
it follows even more easily from Hypothesis~\ref{H:standing}
that $\mathcal{Z}$ is non-empty almost surely. We show below that
$\mathcal{Z}$ is stationary, and any non-empty stationary random set
is necessarily almost surely unbounded above and below.)

We now show that the contact set $\mathcal{Z}$ is stationary and that it is
also regenerative in the sense of Fitzsimmons and Taksar \cite{fitztaksar}. 
For simplicity, we specialize the definition in \cite{fitztaksar} somewhat
as follows by only considering random sets defined on probability spaces
(rather than general $\sigma$-finite measure spaces).

Let $\Omega^0$ denote the class of closed subsets of $\reals$. For $t
\in \mathbb{R}$ and $\omega^0 \in \Omega^0$, define
\[
d_t(\omega^0) := \inf \{ s>t: s \in \omega^0 \}, \quad r_t(\omega^0) :=
d_t(\omega^0) - t,
\]
and
\[
\tau_t(\omega^0) = \text{\textbf{cl}} \{ s-t:s \in \omega^0 \cap (t,\infty) \}
= \text{\textbf{cl}} \left( (\omega^0-t) \cap (0,\infty) \right) .
\]
Here $\text{\textbf{cl}}$ denotes closure and we adopt the
convention $\inf \emptyset = + \infty$. 
Note that 
$t \in \omega^0$ if and only if 
$\lim_{s \uparrow t} r_s(\omega^0) = 0$, and so
$\omega^0 \cap (-\infty,t]$ can be reconstructed from
$r_s(\omega^0)$, $s \le t$, for any $t \in \mathbb{R}$.
Set
$\mathcal{G}^0 = \sigma \{ r_s : s \in \reals \}$ and $\mathcal{G}_t^0 =
\sigma \{ r_s:s \leq t \}$. Clearly, $(d_t)_{t \in \reals}$ is an
increasing c\`adl\`ag process adapted to the filtration
$(\mathcal{G}_t^0)_{t \in \reals}$, and $d_t \ge t$ for all $t \in \reals$.

A \emph{random set} is a measurable mapping $S$ from a measurable
space $(\Omega,\mathcal{F})$ into
$(\Omega^0,\mathcal{G}^0)$.

\begin{defn}
\label{def:regenset}
A probability measure $\Q$ on $(\Omega^0,\mathcal{G}^0)$
is regenerative with regeneration law $\Q^0$ if
\begin{enumerate}[(i)]
  \item $\Q\{d_t = +\infty\} = 0$, for all $t \in \reals$; %(i)%
  \item for all $t \in \reals$ and
    for all $\mathcal{G}^0$-measurable nonnegative functions $F$,
\eq
\lb{eq:regen_law}
\Q \left [F(\tau_{d_t}) \, | \, \mathcal{G}_{t+}^0 \right] = \Q^0[F], %(ii)%
\en
where we write $\Q[\cdot]$ and $\Q^0[\cdot]$ for expectations
with respect to $\Q$ and $\Q^0$.
A random set $S$ defined on a probability space 
$(\Omega,\mathcal{F}, \P)$ is a regenerative set if the
push-forward of $\P$ by the map $S$ (that is,
the distribution of $S$) is a regenerative probability measure.
\end{enumerate}
\end{defn}

\begin{remark}
\label{R:zero_enough}
Suppose that the probability measure $\Q$ on $(\Omega^0,\mathcal{G}^0)$
is stationary; that is, if $S^0$ is the identity map on $\Omega^0$,
then the random set $S^0$ on $(\Omega^0,\mathcal{G}^0, \Q)$
has the same distribution as $u+S^0$ for any $u \in \reals$ or, equivalently,
that the process $(r_t)_{t \in \reals}$ has the same distribution as
$(r_{t-u})_{t \in \reals}$ for any $u \in \reals$.  Then,
in order to check conditions (i) and (ii) of Definition~\ref{def:regenset}
it suffices to check them for the case $t=0$.
\end{remark}

The probability measure $\Q^0$ is itself regenerative. It assigns all of
its mass to the collection of closed subsets of $\reals_+$.
As remarked in \cite{fitztaksar}, it is well known that 
any regenerative probability measure with this
property  arises as the distribution of 
a random set of the form  $\text{\textbf{cl}}\{Y_t : Y_t > Y_0, \, t \ge 0\}$,
where $(Y_t)_{t \ge 0}$ is a subordinator (that is, a non-decreasing,
real-valued L\'evy process) with $Y_0 = 0$ -- see \cite{maisonregen2,maisonregen1}.
Note that $\text{\textbf{cl}}\{Y_t : Y_t > Y_0, \, t \ge 0\}$ has
the same distribution as $\text{\textbf{cl}}\{Y_{ct} : Y_{ct} > Y_{c0}, \, t \ge 0\}$,
and the distribution of the subordinator associated with a regeneration law
can at most be determined up to linear time change (equivalently,
the corresponding drift and L\'evy measure can at most be determined  up
to a common constant multiple).  It turns out that the distribution
of the subordinator is unique except for this ambiguity -- again see 
\cite{maisonregen2,maisonregen1}.

We refer the reader to \cite{fitztaksar} for a description of the
sense in which a stationary regenerative probability measure $\Q$ with
regeneration law $\Q^0$ can be regarded as $\Q^0$ ``made
stationary''. Note that if $\Lambda$ is the L\'evy measure of the
subordinator associated with $\Q$ in this way, then, by
stationarity, it must be the case that $\int_{\reals_+} y \, \Lambda(dy) < \infty$.

\begin{thm}
\label{thm:Zregen}
The random (closed) set $\mathcal{Z}$ is stationary and regenerative.
\end{thm}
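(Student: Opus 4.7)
The plan is to treat stationarity and the regenerative property separately: stationarity follows from a short equivariance argument, while the regenerative property requires a structural identity for the minorant past $D = d_0(\mathcal{Z})$ together with a Markov-type step. For stationarity, observe that the $\alpha$-Lipschitz minorant operator commutes with horizontal translation and with the addition of a constant: if $X^{(u)}_t := X_{t+u} - X_u$ for $u \in \reals$, then its minorant is $M^{(u)}_t = M_{t+u} - X_u$ and its contact set is $\mathcal{Z} - u$. Since $X^{(u)} \eqdist X$, this gives $\mathcal{Z} - u \eqdist \mathcal{Z}$.

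For the regenerative property, by Remark~\ref{R:zero_enough} it suffices to verify Definition~\ref{def:regenset}(i) and (ii) at $t = 0$. Condition (i) is immediate from the a.s.\ unboundedness of $\mathcal{Z}$ recorded just above. For (ii), the structural step is an identity for $M$ past $D$. Writing $\tilde X_u := X_{D + u} - X_D$ for $u \ge 0$ and splitting the infimum in the global formula~\eqref{mformula} evaluated at $D + \tau$ according to $s \le D$ and $s \ge D$, one obtains
\[
M_{D + \tau} = M_D + \min\bigl\{\alpha\tau,\; (X_D - M_D) + \inf_{u \ge 0}\bigl[\tilde X_u + \alpha\lvert \tau - u\rvert\bigr]\bigr\}, \qquad \tau \ge 0.
\]
The derivation uses the $\alpha$-Lipschitz property of $M$ together with $M_D = X_D \wedge X_{D-}$ to bound the $s \le D$ contribution from below by $M_D + \alpha\tau$, a bound attained as $s \uparrow D$ or at $s = D$. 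It follows that $\tau_D(\mathcal{Z})$ is a measurable functional of the pair $\bigl(\tilde X,\, X_D - M_D\bigr)$.

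It remains to show that this pair has conditional law, given $\mathcal{G}^0_{0+}$, equal to a fixed product measure; this gives Definition~\ref{def:regenset}(ii) and simultaneously identifies $\Q^0$. The immediate obstacle is that $D$ is not a stopping time of the natural filtration of $X$, since membership in $\mathcal{Z}$ is defined through the whole path of $X$. To sidestep this, I would approximate $D$ from above by stopping times $D_\eps$ of $X$ at which suitable right- and left-tent conditions are approximately satisfied, and use right-continuity of paths together with closedness of $\mathcal{Z}$ to pass to the limit, transferring the strong Markov property from $D_\eps$ to $D$. The overshoot $X_D - M_D$ must in addition be shown to have a fixed law independent of $\mathcal{G}^0_{0+}$, which I expect to fall out of the same argument since it is determined by the jump behaviour of $X$ at an intrinsically defined time. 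The hardest part, in my view, is this verification of independence and fixed marginals: once it is in hand, the displayed identity immediately gives that $\tau_D(\mathcal{Z})$ has a fixed conditional law, completing Definition~\ref{def:regenset}(ii).
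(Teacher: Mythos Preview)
Your stationarity argument is correct and matches the paper's.  For the regenerative property, your displayed identity for $M_{D+\tau}$ is essentially right and is a reasonable first structural step, but the heart of the matter is the Markov-type input, and here there is a genuine gap.  You recognize that $D$ is not a stopping time and propose to approximate it from above by stopping times $D_\eps$ ``at which suitable right- and left-tent conditions are approximately satisfied''.  This is where the argument breaks down: membership of a time $t$ in $\mathcal{Z}$ requires the \emph{future} inequality $X_s+\alpha(s-t)\ge X_t\wedge X_{t-}$ for all $s>t$, so there is no way to certify $t\in\mathcal{Z}$, even approximately, using only $\sigma\{X_u:u\le t+\eps\}$.  In particular, a scheme with $D_\eps\downarrow D$ and $D_\eps$ an $(\mathcal{F}_t)$-stopping time cannot be produced by checking tent conditions at $D_\eps$; any honest construction would have to rediscover the structure the paper uses.

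The paper's route is to exhibit $D$ concretely as $S+\tilde T$, where $S$ is a \emph{genuine} stopping time (the first time the right tent of slope $-\alpha$ drops to the level of the past infimum of $t\mapsto X_t-\alpha t$; see Lemma~\ref{lem:recipe}) and $\tilde T$ is the time of the global infimum of the post-$S$ process $(X_{S+s}-X_S+\alpha s)_{s\ge0}$.  Strong Markov at $S$ makes this post-$S$ process independent of $\mathcal{F}_S$, and then Millar's post-minimum decomposition for L\'evy processes gives that $(\tilde X_{\tilde T+t}-\tilde X_{\tilde T})_{t\ge0}$ is independent of $\mathcal{F}_D$.  This simultaneously handles the ``overshoot'' issue you raise: in fact $X_D\wedge X_{D-}=M_D$ by definition of $D$, and what one needs is not the law of $X_D-M_D$ but rather conditional independence of the post-$D$ increments, which falls out of Millar's theorem.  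The missing idea in your proposal is precisely this stopping-time/argmin factorization of $D$; without it (or an equivalent coterminal-time argument as in Remark~\ref{rem:millar}), the approximation step cannot be made rigorous.
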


\begin{proof} 
It follows from Lemma~\ref{L:sawtooth} that
$\mathcal{Z}$ is a.s.\ closed.

We next show that $\mathcal{Z}$ is stationary.
Note for $u \in \reals$ that $u + \mathcal{Z}
= \{t \in \reals : X_{(t-u)} \wedge X_{(t-u)-} = M_{(t-u)}\}$.
Define $(\breve X_t)_{t \in \reals}$ by $\breve X_t = X_{t-u} - X_{(-u)}$ 
for $t \in \reals$ and let $\breve M$ be the $\alpha$-Lipschitz minorant of $\breve X$.
Note that $\breve M_t = M_{t-u} - X_{(-u)}$ for $t \in \reals$.  Therefore,
$u + \mathcal{Z} = \{t \in \reals : \breve X_t \vee \breve X_{t-} = \breve M_t\}$
and hence $u + \mathcal{Z}$ has the same distribution as $\mathcal{Z}$
because $\breve X$ has the same distribution as $X$.

We now show that $\mathcal{Z}$ is regenerative.
For $t \in \reals$ set
\[
D_t := \inf \{ s > t : X_s \wedge X_{s-} = M_s \} 
= d_t \circ \mathcal{Z},
\]
\[
R_t := D_t - t,
\]
\[
S_t := 
\inf \left\{ s > t : X_s \wedge X_{s-} - \alpha (s-t) 
\leq 
\inf\{ X_u  - \alpha (u-t) : u \le t\}  \right\},
\]
and 
$\mathcal{F}_t := \bigcap_{s > t} \sigma\{X_u : u \le s\}$.

We claim that $X_{S_t} \le X_{S_t-}$ \ a.s.  Suppose to the contrary that
the event $A := \{X_{S_t} > X_{S_t-}\}$ has positive probability.
On the event $A$, $X_s > X_{S_t} - (X_{S_t} - X_{S_t-})/2$
for $s \in (S_t, S_t + \delta)$ for some (random) $\delta > 0$.
Hence,  $X_{s-} \ge X_{S_t} - (X_{S_t} - X_{S_t-})/2$
for $s \in (S_t, S_t + \delta)$, and so 
$X_s \wedge X_{s-} - \alpha(s - t) 
> X_{S_t} \wedge X_{S_t-} - \alpha(S_t-t) = X_{S_t-} - \alpha(S_t-t)$
for $s \in (S_t, S_t + \delta)$ on the event $A$ provided
$\delta$ is sufficiently small.  It follows that, on the event $A$,
$X_{S_t-} - \alpha(S_t-t) \le \inf\{ X_u  - \alpha (u-t) : u \le t\}$
and 
$X_s \wedge X_{s-} - \alpha(s-t) 
> \inf\{ X_u  - \alpha (u-t) : u \le t\}$ for $t \le s < S_t$.  
Define a non-decreasing
sequence of stopping times $\{U_n\}_{n \in \mathbb{N}}$ by 
\[
U_n := \inf\left\{s > t : X_s \wedge X_{s-} - \alpha(s-t)
\le 
\inf\{ X_u  - \alpha (u-t) : u \le t\} + \frac{1}{n}\right\},
\]
and set $U_\infty := \sup_{n \in \mathbb{N}} U_n$.
We have shown that, on the event $A$, 
$U_n < S_t$ for all $n \in \mathbb{N}$ and $U_\infty = S_t$.
By the quasi-left-continuity of $X$, 
$\lim_{n \to \infty} X_{U_n} = X_{U_\infty}$ \ a.s.  In particular,
$X_{S_t} = X_{S_t-}$ \ a.s. on the event $A$, and so
$A$ cannot have positive probability.

Lemma \ref{lem:recipe} now gives that
\[
D_t = \inf \left\{ s \ge S_t : X_t \wedge X_{t-} + \alpha (s-S_t) 
=
\inf\{ X_u + \alpha (u-S_t) : u \ge S_t \} \right\}
\]
almost surely.

We have already remarked that $\mathcal{Z}$ is almost surely unbounded 
above and below, and hence condition (i) of Definition~\ref{def:regenset} holds.
By Remark~\ref{R:zero_enough}, in order to check condition (ii) of 
Definition~\ref{def:regenset}, it suffices to consider the case $t=0$.

For notational simplicity, set $S := S_0$ and $D := D_0$
-- see Figure~\ref{fig:recipe} for two illustrations of the
construction of $S$ and $D$ from a sample path. 
For a random time $U$, let  $\mathcal{F}_{U}$ be the $\sigma$-field generated
by random variables of the form $\xi_U$, where $(\xi_t)_{t \in \reals}$
is some optional process for the filtration $(\mathcal{F}_t)_{t \in \reals}$
(cf. \ Millar \cite{millar_77, millarpostmin}).
It follows from Corollary~\ref{C:flow} (where we are thinking intuitively of
removing the process to the right of $D$ rather than to the right of
zero) that 
$\bigcap_{\epsilon > 0} \sigma\{R_s : s \le \epsilon\} \subseteq \mathcal{F}_{D}$.

Put
\[
\tilde {X} = (\tilde{X}_s)_{s \ge 0} := \left( (X_{S+s} - X_S) + \alpha s
\right)_{s \geq 0}.
\]
By the strong Markov property at the stopping time $S$ and the spatial homogeneity of $X$, the process
$\tilde X$ is independent of $\mathcal{F}_S$ with the same distribution as the L\'evy process
$(X_t + \alpha t)_{t \ge 0}$.
Suppose for the L\'evy process $(X_t + \alpha t)_{t \ge 0}$ that zero is regular for the interval $(0,\infty)$.
A result of Millar
\cite[Proposition $2.4$]{millarzeroone} implies that almost surely
there is a unique time $\tilde{T}$ such that $\tilde{X}_{\tilde{T}} =
\inf\{\tilde{X}_s : s \ge 0\}$ and that if $\bar T$ is
such that $\tilde{X}_{\bar T -} = \inf\{\tilde{X}_s : s \ge 0\}$,
then $\bar T = \tilde T$.  Thus,
$\tilde{T} 
= \sup \{ t \ge 0 : \tilde{X}_t \wedge \tilde{X}_{t-} = \inf\{\tilde{X}_s : s \ge 0\} \}$
and
$D = S + \tilde T$.
Combining this observation with the main result of Millar
\cite{millarpostmin} (see Remark~\ref{rem:millar} below) and the fact
that $\tilde{X}_{\tilde{T}} = \inf\{\tilde{X}_s : s \ge 0\}$
gives that $(\tilde{X}_{\tilde{T}+t})_{t \ge 0}$
is conditionally independent of $\mathcal{F}_D$ 
given $\tilde{X}_{\tilde{T}}$. Thus, again by the spatial homogeneity of $\tilde X$,
$(\tilde{X}_{\tilde{T}+t} - \tilde{X}_{\tilde{T}})_{t \ge 0}$
is independent of $\mathcal{F}_D$.  This establishes 
condition (ii) of Definition~\ref{def:regenset} for $t=0$.

If zero is not regular for the interval $(0,\infty)$ for the L\'evy process
$(X_t + \alpha t)_{t \ge 0}$, then zero is necessarily regular for 
the interval $(0,\infty)$
for the L\'evy process $(X_{-t-} + \alpha t)_{t \ge 0}$ because 
this latter process
the same distribution as 
$(-(X_t + \alpha t) + 2 \alpha t)_{t \ge 0}$.
The argument above then establishes that the random
set $-\mathcal{Z}$ is regenerative.  It follows from 
\cite[Theorem 4.1]{fitztaksar} that
$\mathcal{Z}$ is regenerative with the same distribution
as $-\mathcal{Z}$.
\end{proof}

\begin{remark}
\label{rem:millar}
A key ingredient in the proof of Theorem~\ref{thm:Zregen} was the
result of Millar from \cite{millarpostmin} which says that, under suitable
conditions, the future evolution of a c\`adl\`ag strong Markov process
after the time it attains its global minimum is conditionally
independent of the past up to that time given the value
of the process and its left limit at that time.  That result
follows in turn from results in \cite{MR0334335} on last exit decompositions
or results in \cite{MR0297019} on analogues of the strong Markov
property at general coterminal times.  We did not apply 
Millar's result directly; rather, we considered
a random time $D = D_0$ that was the last time after a stopping
time that a strong Markov process attained its infimum over times
greater than the stopping time and combined Millar's result
with the strong Markov property at the stopping time.  
An alternative route would have been to observe that the random
time $D$  is a {\em randomized coterminal time}
in the sense of \cite{millar_77} for a suitable strong Markov process.
\end{remark}

\section{Identification of the associated subordinator}
\label{sec:identify}

Let $Y = (Y_t)_{t \ge 0}$ be ``the'' subordinator associated with the
regenerative set $\mathcal{Z}$. Write $\delta$ and $\Lambda$
for the drift coefficient and L\'evy measure of $Y$.  
Recall that these quantities are unique up to a common scalar
multiple. The closed range of $Y$
either has zero Lebesgue measure almost surely or infinite
Lebesgue measure almost surely according to whether
$\delta$ is zero or positive  \cite[Chapter 2, Theorem
3]{doney}.  Consequently, the same
dichotomy holds for the contact set $\mathcal{Z}$,
and the following result gives necessary and sufficient
conditions for each alternative.

\begin{thm}
\label{thm:Zmeasure}
If $\sigma=0$, $\Pi(\reals) < \infty$, and $|d|=\alpha$, then
the Lebesgue measure of $\mathcal{Z}$ is almost surely infinite.
If $X$ is not of this form, then the Lebesgue measure of $\mathcal{Z}$
is almost surely zero if and only if zero is regular for the interval
$(-\infty,0]$ for at least one of the L\'evy processes 
$(X_t + \alpha t)_{t\ge 0}$ and $(-X_t+\alpha t)_{t \ge 0}$.  
\end{thm}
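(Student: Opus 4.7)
The plan is to turn the Lebesgue-measure dichotomy for $\mathcal{Z}$ into a probability calculation for $\P\{0\in\mathcal{Z}\}$ and then to factor that event using the independence of past and future of the two-sided L\'evy process. The dichotomy recalled immediately before the theorem says that the Lebesgue measure of $\mathcal{Z}$ is a.s.\ zero or a.s.\ infinite according as $\delta=0$ or $\delta>0$; and a standard Fubini computation exploiting the stationarity of $\mathcal{Z}$ from Theorem~\ref{thm:Zregen} gives $\E[\text{Leb}(\mathcal{Z}\cap[0,1])]=\P\{0\in\mathcal{Z}\}$, so the question reduces to deciding whether $\P\{0\in\mathcal{Z}\}=0$. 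Because $X_{0-}=X_0=0$ a.s., the explicit formula \eqref{mformula} for $M_0$ identifies
\[\{0\in\mathcal{Z}\}=\{M_0=0\}=\{X_s+\alpha|s|\ge 0\ \text{for all }s\in\reals\}.\]
Splitting at $s=0$, and invoking the independence of $(X_t)_{t\ge 0}$ and $(X_t)_{t\le 0}$ together with the identity $(X_{-u})_{u\ge 0}\eqdist(-X_u)_{u\ge 0}$, one obtains the product formula
\[\P\{0\in\mathcal{Z}\}=\P\{X^+_t\ge 0\ \forall t\ge 0\}\cdot\P\{X^-_t\ge 0\ \forall t\ge 0\},\]
where $X^\pm_t:=\pm X_t+\alpha t$.

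The central analytic step is the following Wiener--Hopf-type statement, which I would isolate as a separate lemma: for a non-degenerate L\'evy process $Y$ with $Y_0=0$ and $\E[Y_1]>0$, one has $\P\{Y_t\ge 0\ \forall t\ge 0\}>0$ if and only if $0$ is not regular for the \emph{open} half-line $(-\infty,0)$ for $Y$. The forward implication is immediate. The converse is a fluctuation-theoretic argument: the overall infimum of $Y$ has the distribution of $-H^-_\zeta$, where $H^-$ is the descending ladder-height subordinator and $\zeta$ is an independent exponential killing time (finite because $Y$ drifts to $+\infty$); and this law has an atom at $0$ precisely when $H^-$ has zero drift and finite jump activity, which is exactly the standard characterization of $0$ being non-regular for $(-\infty,0)$ for $Y$. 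Hypothesis~\ref{H:standing} gives $\E[X^\pm_1]=\alpha\mp\E[X_1]>0$, so the criterion applies to both $X^+$ and $X^-$; combined with the product formula this shows that the Lebesgue measure of $\mathcal{Z}$ vanishes a.s.\ if and only if $0$ is regular for $(-\infty,0)$ for at least one of $X^+, X^-$.

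The theorem is phrased with regularity for the \emph{closed} half-line $(-\infty,0]$, so the last step is to compare the two notions. They coincide for any L\'evy process \emph{except} a compound Poisson process with zero drift, which sits at $0$ for a holding period and is thus trivially regular for $(-\infty,0]$ but not for $(-\infty,0)$. A direct check shows that $X^+$ (resp.\ $X^-$) is compound Poisson with zero drift precisely when $X$ is compound Poisson with drift $-\alpha$ (resp.\ $+\alpha$)---that is, exactly when $X$ has the excluded form $\sigma=0$, $\Pi(\reals)<\infty$, $|d|=\alpha$. Outside this form the open- and closed-interval regularities coincide for $X^\pm$ and the second statement of the theorem follows. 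For the first statement, by the obvious reflection symmetry we may assume $d=\alpha$ and write $X_t=\alpha t+\sum_{T_i\le t}J_i$; then $X^+_t=2\alpha t+\sum J_i$ is compound Poisson with positive drift and positive mean $\alpha+\E[X_1]$, so has positive survival probability by the classical Cram\'er--Lundberg net-profit inequality, while $X^-_t=-\sum J_i$ is a driftless compound Poisson whose jump-sum random walk has positive mean $\alpha-\E[X_1]>0$ and hence has positive probability of staying nonnegative. Therefore $\P\{0\in\mathcal{Z}\}>0$ and the Lebesgue measure of $\mathcal{Z}$ is a.s.\ infinite. The main obstacle is the fluctuation lemma of the second paragraph together with the open/closed regularity bookkeeping of the third, since this is precisely the point where the exceptional compound Poisson form forces itself to be split off as a separate case.
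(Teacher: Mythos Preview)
Your approach is essentially the paper's: reduce via Fubini and stationarity to whether $\P\{0\in\mathcal{Z}\}>0$, factor that probability using the independence of past and future, and then characterize when each factor is positive via fluctuation theory. The paper carries out the last step with the Pecherskii--Rogozin formula for the distribution of the overall infimum and then invokes Rogozin's integral criterion, handling the $<$ versus $\le$ discrepancy by observing that $\P\{X_t\pm\alpha t=0\}=0$ for all $t>0$ outside the excluded compound-Poisson form; you instead invoke the ladder-height representation of the infimum and then separately reconcile open- and closed-interval regularity. These are two packagings of the same fluctuation-theoretic fact, and your explicit treatment of the special case $|d|=\alpha$ is somewhat more detailed than the paper's one-line ``it is now clear'' argument.

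One small caution: the equivalence you cite as ``standard'' --- that the descending ladder-height subordinator is compound Poisson precisely when $0$ is irregular for $(-\infty,0)$ --- is true, but the version most readily found in the literature is for the ladder \emph{time} process (from which the ladder-height statement follows since the bivariate ladder process has common jump times). If you keep the ladder-height formulation, a sentence linking it to the ladder-time characterization, or alternatively a direct appeal to the Pecherskii--Rogozin identity $\P\{\inf_{t\ge 0}Y_t=0\}=\exp\bigl(-\int_0^\infty t^{-1}\P\{Y_t<0\}\,dt\bigr)$ as the paper does, would make the step airtight.
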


\begin{proof} 
Suppose first that $\sigma=0$, $\Pi(\reals) < \infty$ and $|d|=\alpha$.
In this case, the paths of $X$ are piecewise linear with slope $d$.
Our standing assumption $|\E[X_1]| < \alpha$ and the
strong law of large numbers give 
$\lim_{t \to -\infty} t^{-1} X_t = \lim_{t \to +\infty} t^{-1} X_t = \E[X_1]$.
It is now clear that $\mathcal{Z}$ has positive Lebesgue measure
with positive probability and hence infinite Lebesgue measure almost surely.

Suppose now that $X$ is not of this special form. 
It suffices by Fubini's theorem and the stationarity
of $\mathcal{Z}$ to show that
$\P\{ 0 \in \mathcal{Z} \} > 0$ if and only if zero is not regular for
$(-\infty,0]$ for both of the L\'evy processes 
$(X_t + \alpha t)_{t \ge 0}$ and
$(-X_t+\alpha t)_{t \ge 0}$.

Set 
$I^- := \inf\{X_t - \alpha t : t \leq 0\}$
and
$I^+ := \inf\{X_t + \alpha t : t \geq 0\}$.
Recall from \eqref{mformula} that $M_0 = I^{-} \wedge I^+$.
Therefore, 
\[
\begin{split}
\P\{0 \in \mathcal{Z}\}  
& = \P\{I^{-} \wedge I^+ = X_0 \wedge X_{0-} = 0\} \\
& = \P\{I^- = I^+ = 0\} \\
& = \P\{I^- = 0\} \P\{I^+ = 0\}, \\
\end{split}
\]
and so $\P\{0 \in \mathcal{Z}\} > 0$ if and only if $\P\{I^- = 0\} > 0$ and $\P\{I^+ = 0\} > 0$.

Note that $I^-$ has the same distribution as 
$\inf\{-X_t + \alpha t : t \geq 0\}$.
From the formulas of Pecherskii and Rogozin
\cite{pechrogo} (or \cite[Theorem VI.5]{bertoin}),
\eq
\lb{pechrogo2}
\E[e^{\theta I^-}] = \exp \left( \int_0^\8 \int_{(-\8,0]} (e^{\theta x} - 1)t^{-1} \P\{-X_{t} + \alpha t \in dx\} \, dt\right)
\en
and
\eq
\lb{pechrogo1}
\E[e^{\theta I^+}] = \exp \left( \int_0^\8 \int_{(-\8,0]} (e^{\theta x} - 1)t^{-1} \P\{X_t + \alpha t \in dx\} \, dt \right).
\en
Taking the limit as $\theta \ra \infty$ and applying monotone
convergence in \eqref{pechrogo2} and in \eqref{pechrogo1} gives
\eq
\lb{pechrogo4}
\P\{I^- = 0\} = \exp \left( - \int_0^\8 t^{-1} \P\{-X_{t} + \alpha t < 0\} \, dt \right)
\en 
and
\eq
\lb{pechrogo3}
\P\{I^+ = 0\} = \exp \left( - \int_0^\8 t^{-1} \P\{X_t + \alpha t < 0\} \, dt \right).
\en

Since we are assuming that it is not the case that
$\sigma = 0$,  $\Pi(\reals) < \infty$ and $|d|=\alpha$, 
we have $\P\{X_t + \alpha t = 0\} = \P\{-X_{t} +
\alpha t = 0\} = 0$ for all $t>0$. 
Moreover, by our standing assumption $|\E[X_1] | < \alpha$ it certainly follows
that both $X_t + \alpha t$ and $-X_{t} + \alpha t$ drift to $+\infty$. 
Hence,
by a result of Rogozin \cite{rogozin} 
(or see \cite[Theorem VI.12]{bertoin})
\eq
\lb{reg2}
\int_1^\infty t^{-1} \P\{ X_t + \alpha t \leq 0 \} \, dt < \infty 
\and
\int_1^\infty t^{-1} \P\{ - X_{t} + \alpha t \leq 0 \} \, dt < \infty.
\en

The result now follows from \eqref{eq:rogozin_regularity}
which implies that zero is not regular for the interval
$(-\infty,0]$ for both $(-X_{t}+\alpha t)_{t \ge 0}$
and $(X_t + \alpha t)_{t \ge 0}$ 
 if and only if
\[
\int_0^1 t^{-1} \P\{ -X_{t} + \alpha t \leq 0 \} \, dt < \infty
\and
\int_0^1 t^{-1} \P\{ X_t + \alpha t \leq 0 \} \, dt < \infty.
\]
\end{proof} 

\begin{remark}
\label{R:Zmeasure}
\begin{itemize}
\item[(i)]
Note that zero is regular for the interval $(-\infty,0]$ for both $(X_t +
\alpha t)_{t \ge 0}$ and $(-X_{t}+\alpha t)_{t \ge 0}$ when $X$ has
paths of unbounded variation, since then $\liminf_{t \ra 0} t^{-1} X_t =
-\infty$ by \eqref{eq:Rogozin_small_time}.
\item[(ii)]
If $X$ has paths of bounded variation and drift coefficient $d$, then $\lim_{t
  \downarrow 0} t^{-1}(X_t+ \alpha t) = d+ \alpha$ and $\lim_{t \downarrow 0}
t^{-1}(-X_{t}+ \alpha t) = -d + \alpha$ 
by \eqref{eq:shtatland_statement}. Thus, if
$|d| < \alpha$, then zero is regular for $(-\infty,0]$ for
neither  $(X_t + \alpha t)_{t \ge 0}$ or $(-X_{t}+\alpha t)_{t \ge
  0}$, whereas if $|d| > \alpha$, 
then zero is regular for $(-\infty,0]$ for exactly one of those two processes.
\item[(iii)]
If $X$ has paths of bounded variation and $|d| = \alpha$, 
then an integral condition due to 
Bertoin involving the
L\'evy measure $\Pi$  determines whether zero
is regular for the interval $(-\infty,0]$ 
for whichever of the processes $(X_t +\alpha t)_{t \ge 0}$ 
or $(-X_{t}+\alpha t)_{t \ge 0}$ has zero drift
coefficient \cite{bertoin97}.
\end{itemize}
\end{remark}

\begin{remark}
\label{R:size-biasing}
Recall the notation 
$G = \sup \{ t < 0 : t \in \mathcal{Z} \}$, 
$D = \inf \{ t > 0 : t \in \mathcal{Z} \}$ and  $K = D-G$ (note that $D = d_0 \circ \mathcal{Z}$). If the Lebesgue measure of
$\mathcal{Z}$ is almost surely zero (equivalently when $\delta = 0$
\cite[Chapter 2, Theorem 3]{doney}), then $0 \notin \mathcal{Z}$
and $G < 0 < D$, and the distribution of $K$ is obtained by
size-biasing the L\'evy measure $\Lambda$; that is,
\begin{equation}
\label{E:K_size_bias_Lambda}
\P\{K \in dx\} = \frac{x \, \Lambda(dx)}{\int_{\reals_+} y \, \Lambda(dy)}
\end{equation}
(recall that $\int_{\reals_+} y \, \Lambda(dy) < \infty$ since
$\mathcal{Z}$ is stationary).

If the Lebesgue measure of $\mathcal{Z}$ is positive almost surely
(and hence $\delta > 0$),
then $\P\{K=0\} > 0$ and we see by multiplying together 
\eqref{pechrogo4} and \eqref{pechrogo3} that 
\begin{equation}
\label{p0}
\begin{split}
\P\{K=0\} 
& = \exp \left( - \int_0^\infty t^{-1} 
\left( \P\{X_t + \alpha t < 0\}+ \P\{-X_{t} + \alpha t < 0\} \right) 
\, dt \right) \\
& = \exp \left(- \int_0^\infty t^{-1} \P\{X_t \notin [-\alpha t, \alpha t]\} \, dt \right). \\
\end{split}
\end{equation}
In this latter case, the conditional distribution of $K$ 
given $K > 0$ is the size-biasing of $\Lambda$. 
The relationship between $\delta$ and $\Lambda$ is
easily deduced since $\P \{ K = 0 \}$ is the expected proportion of the real line
that is covered by the range of the subordinator associated with
$\mathcal{Z}$. Thus,
\[
\P \{ K = 0 \} = \frac{ \delta }{ \delta + \int_{\reals_+} y
  \Lambda(dy) } .
\]
\end{remark}

\begin{remark}
\label{R:Lambda_finite_discussion}
Theorem~\ref{thm:Zmeasure} and Remark~\ref{R:Zmeasure} provide
conditions for deciding whether the Lebesgue measure of the contact
set $\mathcal{Z}$ is zero almost surely or positive almost surely,
i.e. whether $\delta = 0$ or $\delta > 0$. In
Theorem~\ref{thm:newLambda} we provide conditions for deciding whether
$\Lambda(\reals_+) < \infty$ or $\Lambda(\reals_+) = \infty$ for the
case $\delta = 0$ and $\Pi(\reals) = \infty$. Since $\delta = 0$, these
conditions determine whether the contact set $\mathcal{Z}$ is a
discrete set or not.  On the way, we provide in
Propositions~\ref{prop:localExtrema_unbounded},
\ref{prop:localExtrema_bounded}, and \ref{prop:localExtrema_zero}
descriptions of the local behavior of a L\'evy process at its
local extrema.

Clearly, if $\sigma = 0$ and $\Pi(\reals) < \infty$, then
$\Lambda(\reals_+) < \infty$.  Consider
the remaining case $\sigma =0$,
$\Pi(\reals) = \infty$, and $\delta > 0$.  We claim that
$\Lambda(\reals_+) = \infty$. To see this, suppose to the contrary
that $\Lambda(\reals_+)<\infty$, then there almost surely exists $t_1
< t_2$ such that $X_t \wedge X_{t-} = M_t$ for all $t_1 < t < t_2$. 
Because $t \mapsto M_t$ is continuous, the paths of $X$ 
cannot jump between times $t_1$ and
$t_2$.  However, when $\Pi(\reals) = \infty$ the jump times
of $X$ are almost surely dense in $\reals$.
\end{remark}

Write
\begin{equation}
\label{E:def_minima}
\mathcal{M}^- := \bigcup_{\epsilon > 0} 
\{t \in \reals : X_t \wedge X_{t-} 
= \inf\{X_s : s \in (t-\epsilon,t+\epsilon)\}\}
\end{equation}
for the set of local infima of the path of $X$ and 
\begin{equation}
\label{E:def_maxima}
\mathcal{M}^+ := \bigcup_{\epsilon > 0} 
\{t \in \reals : X_t \wedge X_{t-} 
= \sup\{X_s : s \in (t-\epsilon,t+\epsilon)\}\}
\end{equation}
for the set of local suprema.  The following result is essentially due
to Vigon \cite{abrupt}. 

\begin{prop}
\label{prop:localExtrema_unbounded}
Let $X$ be a L\'evy process with paths of unbounded variation. Then,
$X_t = X_{t-}$ for all $t \in \mathcal{M}^-$ and all $t \in
\mathcal{M}^+$ almost surely. Moreover, for any $r>0$, $\liminf_{\eps
  \downarrow 0} \eps^{-1}(X_{t + \eps} - X_t) \geq r$ for all $t \in
\mathcal{M}^-$ almost surely if and only if
\begin{equation}
\label{eq:r_plus}
\int_0^1 t^{-1} \P\{X_t \in [0, rt]\} \, dt < \infty ,
\end{equation}
and $\limsup_{\eps \downarrow 0} \eps^{-1}(X_{t + \eps} - X_t)
\leq -r$  for all $t \in \mathcal{M}^+$ almost surely  if and only if
\begin{equation}
\label{eq:r_minus}
\int_0^1 t^{-1} \P\{X_t \in [-rt, 0]\} \, dt < \infty .
\end{equation}
\end{prop}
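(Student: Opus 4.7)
My plan is to handle the first claim via a countable union bound over jump times. Enumerate the jump times $(T_n)_{n \in \nats}$ of $X$. For each $T_n$, the strong Markov property gives that $(X_{T_n+s}-X_{T_n})_{s \geq 0}$ is an independent copy of $X$; since $X$ has unbounded variation, Rogozin's oscillation \eqref{eq:Rogozin_small_time} forces both $\liminf_{s \downarrow 0} s^{-1}(X_{T_n+s}-X_{T_n})=-\infty$ and $\limsup_{s \downarrow 0} s^{-1}(X_{T_n+s}-X_{T_n})=+\infty$, so the post-$T_n$ trajectory takes values both strictly above and strictly below $X_{T_n}$ in every right-neighborhood of $T_n$. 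This rules out $X_{T_n}$ being a local extremum. For the value $X_{T_n-}$ I would apply the same reasoning to the time-reversed (dual) L\'evy process, which is again of unbounded variation, to obtain analogous oscillation just before $T_n$. Since $(T_n)$ is countable, a union bound delivers the a.s.\ statement uniformly.

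For the $\liminf$ and $\limsup$ equivalences, the $\mathcal{M}^+$ case follows from the $\mathcal{M}^-$ case applied to $-X$, so I would focus on $\mathcal{M}^-$ and \eqref{eq:r_plus}. The key structural step is to cover $\mathcal{M}^-$ by a countable family of random times: for each rational pair $p<q$, let $\tau_{p,q}$ be the time at which $X$ attains its infimum on $[p,q]$, uniqueness holding a.s.\ by the Millar result \cite{millarzeroone} already invoked in the proof of Theorem~\ref{thm:Zregen}. Given $t \in \mathcal{M}^-$, pick rationals $p<t<q$ inside the localization neighborhood of $t$; then $X_t \wedge X_{t-}$ equals the infimum of $X$ over $[p,q]$, and uniqueness forces $t=\tau_{p,q}$. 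Hence $\mathcal{M}^- \subseteq \bigcup_{p<q} \{\tau_{p,q}\}$ a.s., and it suffices to check the $\liminf \geq r$ property at each of the countably many $\tau_{p,q}$.

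At $\tau_{p,q}$ I would invoke Millar's post-minimum decomposition (as in the proof of Theorem~\ref{thm:Zregen}): conditional on the pre-minimum data, $(X_{\tau_{p,q}+s}-X_{\tau_{p,q}})_{s \geq 0}$ has the law of $X$ conditioned to stay non-negative on $[0,q-\tau_{p,q}]$. The liminf condition then becomes the statement that this conditioned process does not re-enter the slab $\{(s,y):0 \leq y \leq rs\}$ at arbitrarily small $s>0$. Via a Rogozin-type regularity criterion, comparing the regularity of $0$ for $(-\infty,0]$ for the tilted process $X_t - rt$ against that for $X_t$ itself, this non-re-entry should be equivalent to
\[
\int_0^1 t^{-1}\P\{X_t \in [0,rt]\} \, dt
= \int_0^1 t^{-1} \bigl[\P\{X_t \leq rt\} - \P\{X_t \leq 0\}\bigr] \, dt < \infty,
\]
where the rewriting uses $\P\{X_t = 0\}=0$ (a consequence of unbounded variation). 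The reverse implication is by contraposition: a divergent integral, after the same decomposition, produces some $\tau_{p,q}$ at which the conditioned process enters the slab for arbitrarily small $s>0$, giving a local infimum with $\liminf<r$.

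The main obstacle will be the third step: cleanly translating the small-time behavior of the conditioned post-minimum process back into an integral involving the unconditioned law of $X$. This requires the Chaumont--Doney theory of L\'evy processes conditioned to stay positive, or equivalently the excursion theory of the upward ladder process. Vigon's argument in \cite{abrupt} carries this through in the $r=\infty$ (abrupt) case using the excursion measure of the dual ladder height process, and the finite-$r$ extension amounts to tilting his argument by a drift of slope $r$, which should yield precisely the one-sided integrals \eqref{eq:r_plus} and \eqref{eq:r_minus}.
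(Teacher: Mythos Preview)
Your overall architecture matches the paper's: reduce the statement about all local infima to a statement about a single distinguished infimum time, then invoke Vigon's finite-$r$ result. The paper localizes via the global infimum $\rho$ of $X$ killed at an independent exponential time and cites \cite[Proposition~3.6]{abrupt} directly for the equivalence with \eqref{eq:r_plus}; your rational-interval times $\tau_{p,q}$ serve the same purpose and the covering $\mathcal{M}^- \subseteq \bigcup_{p<q}\{\tau_{p,q}\}$ is correct for unbounded variation (the infimum on $[p,q]$ a.s.\ falls in the interior since zero is regular for both half-lines).

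Two points where your write-up is looser than the paper's. First, for the continuity claim at local extrema you argue forward at each jump stopping time $J_i^\delta$ via Rogozin and then ``apply the same reasoning to the time-reversed process''; but a jump time of $X$ is not a stopping time for the reversed filtration, so the strong Markov step does not transfer verbatim. The paper avoids this by appealing to Millar \cite[Theorem~3.1]{millarzeroone}: regularity of zero for both half-lines forces $X_\rho=X_{\rho-}$ at the global infimum, and the localization then handles all local infima at once. Second, your description of the post-minimum law as ``$X$ conditioned to stay non-negative on $[0,q-\tau_{p,q}]$'' is only heuristic; Millar gives conditional independence given the minimum value, not that specific conditional law. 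Since you (like the paper) ultimately defer to Vigon for the analytic equivalence, this imprecision is harmless, but it is worth noting that the actual mechanism in \cite{abrupt} goes through fluctuation identities rather than a direct Rogozin comparison between $X_t$ and $X_t-rt$.
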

\begin{proof}
We show the equivalence involving local infima. The
equivalence involving local suprema then follows by a time reversal
argument.

Let $(X^q_t)_{t \ge 0}$ be a copy of $(X_t)_{t \ge 0}$
killed at an independent exponential time $\xi$ with parameter $0 < q <
\infty$. Define
\[
\rho := \arg \inf_{0 < t < \xi} X^q_t \wedge X^q_{t-} 
\quad \text{ and } \quad
\sigma := \arg \sup_{0 < t < \xi} X^q_t \wedge X^q_{t-} .
\]
By a localization argument such as the one indicated in the proof
of \cite[Theorem 1.3]{abrupt}, it is sufficient to show
\begin{equation}
\label{eq:global_equal}
X_\rho = X_{\rho-}
\end{equation}
and
\begin{equation}
\label{eq:global}
\liminf_{\eps \downarrow 0} \eps^{-1}(X_{\rho + \eps} - X_\rho) \geq r  
\text{ if and only if }
\eqref{eq:r_plus} \text{ holds}.
\end{equation}
Because $X$ has paths of unbounded variation, 
$\liminf_{t \downarrow 0} t^{-1} X_t = -\infty$ and
$\limsup_{t \downarrow 0} t^{-1} X_t = +\infty$
almost surely by \eqref{eq:Rogozin_small_time}, and hence zero is regular
for both $(-\infty,0]$ and $[0,\infty)$.  Equation \eqref{eq:global_equal}
then follows from \cite[Theorem $3.1$]{millarzeroone}.
The inequality \eqref{eq:global} 
is exactly \cite[Proposition $3.6$]{abrupt}.

%To be explicit about the localization argument, 
%suppose that \eqref{eq:global} is true. Then it is
%true with $\rho$ replaced by $\inf_{0 \leq s \leq t} X_s$ for all
%$t>0$ in some set whose complement can be contained in a Lebesgue-null
%set. 
%%Let $\{ t_n \}_{n \in \mathbb{N}}$ be a sequence of such times with $t_n
%%\ra 0$.  
%{\tt XXX I didn't see what introducing $t_n$ does - why not just take
%$t_n = n^{-1}$?}
%For $m,n \in \mathbb{N}$ set
%\[
%\mathcal{N}_{m,n} :=
%\{t \in (m n^{-1} ,(m+1) n^{-1} ] : X_t \wedge X_{t-} 
%= \inf\{X_s : s \in (mn^{-1},(m+1)n^{-1} ] \}\} ,
%\]
%so that almost surely $\liminf_{\eps \downarrow 0} \eps^{-1}(X_{\mathcal{N}_{m,n} + \eps} -
%X_{\mathcal{N}_{m,n}} \wedge X_{\mathcal{N}_{m,n}-}) \geq r$ for every 
%$m,n \in \mathbb{N}$ if and only if
%\eqref{eq:r_plus} holds. 
%{\tt This doesn't make sense - $\mathcal{N}_{m,n}$ is a set.}
%Define
%\[
%\mathcal{M}^-_n :=
%\{t \in \reals : X_t \wedge X_{t-} 
%= \inf\{X_s : s \in (t-n^{-1},t+n^{-1}) \}\} ,
%\]
%so that $\mathcal{M}^- = \bigcup_{n \geq 1} \mathcal{M}^-_n$. The
%localization argument is completed by noting that $\mathcal{M}^-_n
%\subseteq \mathcal{N}_{m,n}$ for every $n \in \mathbb{N}$.
\end{proof}

\begin{prop}
\label{prop:localExtrema_bounded}
Let $X$ be any L\'evy process with paths of bounded variation with
drift $d \neq 0$. Then, $X_t \neq X_{t-}$ for all $t \in \mathcal{M}^-$
and all $t \in \mathcal{M}^+$ almost surely. Moreover, $\lim_{\eps
  \downarrow 0} \eps^{-1}(X_{t + \eps} - X_t) = d$ for all $t \in
\mathcal{M}^-$ and all $t \in \mathcal{M}^+$ almost surely.
\end{prop}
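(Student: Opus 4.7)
My plan is to combine the localization argument from Proposition~\ref{prop:localExtrema_unbounded} with applications of Millar's zero-one laws at the global extrema of a killed copy of $X$, and then to use the strong Markov property at enumerated jump times to obtain the derivative claim.

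Without loss of generality I assume $d > 0$; the case $d < 0$ follows by applying the result to $-X$. Shtatland's theorem~\eqref{eq:shtatland_statement} gives $\lim_{t \downarrow 0} t^{-1} X_t = d > 0$, so $X_t > 0$ for all sufficiently small $t > 0$ almost surely, and consequently zero fails to be regular for $(-\infty,0]$ for $X$; a symmetric argument for $-X$ shows that zero fails to be regular for $[0,\infty)$ for $-X$. Killing $X$ at an independent exponential time $\xi$ with parameter $q$, and writing $\rho$, $\sigma$ for the argmin and argmax of $X^q_t \wedge X^q_{t-}$ on $(0,\xi)$ as in the proof of Proposition~\ref{prop:localExtrema_unbounded}, Millar's zero-one laws \cite[Theorem 3.1]{millarzeroone} force $X_\rho \neq X_{\rho-}$ (from the irregularity of $(-\infty, 0]$ for $X$) and $X_\sigma \neq X_{\sigma-}$ (from the corresponding irregularity for $-X$). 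Since $X_\rho$ is the infimum and $X_{\sigma-}$ is the supremum, necessarily $X_\rho < X_{\rho-}$ and $X_\sigma < X_{\sigma-}$. The latter gives $X_\sigma \wedge X_{\sigma-} = X_\sigma < X_{\sigma-} = \sup\{X_s : s \in (\sigma - \epsilon, \sigma + \epsilon)\}$ for small $\epsilon > 0$, so $\sigma \notin \mathcal{M}^+$. A localization argument along the lines of \cite[Theorem 1.3]{abrupt} then upgrades these to the statements that $\mathcal{M}^+$ is empty almost surely (making the $\mathcal{M}^+$ assertions vacuous) and that every point of $\mathcal{M}^-$ is a jump time of $X$.

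For the derivative claim at $t \in \mathcal{M}^-$, I use that $t$ is a jump time. For each $n \in \nats$, the jumps of $X$ of magnitude exceeding $1/n$ form a Poisson point process of finite intensity, whose occurrence times on any bounded interval are stopping times for the natural filtration of $X$. The strong Markov property at any such stopping time $T$, together with Shtatland's theorem~\eqref{eq:shtatland_statement} applied to the post-$T$ L\'evy process $(X_{T+s} - X_T)_{s \ge 0}$, gives $\lim_{\epsilon \downarrow 0} \epsilon^{-1}(X_{T+\epsilon} - X_{T}) = d$ almost surely. Intersecting the resulting almost-sure events over all enumerated jump times extends the limit to every jump time of $X$ simultaneously, and in particular to every $t \in \mathcal{M}^-$.

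The main obstacle is the localization step, which transports conclusions from the global extremum of a single killed path to conclusions about all local extrema of $X$ on $\reals$. Following the template of \cite[Theorem 1.3]{abrupt}, one argues by contradiction: if an exceptional $t \in \mathcal{M}^-$ with $X_t = X_{t-}$ existed with positive probability, then by varying the killing rate $q$ and translating the time origin one could arrange, with positive probability, for this $t$ to coincide with the argmin $\rho$ of the corresponding killed sample path, contradicting $X_\rho < X_{\rho-}$. A parallel argument handles the $\mathcal{M}^+$ side.
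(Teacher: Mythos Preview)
Your overall strategy matches the paper's: reduce to the global extremum of a killed copy via localization, invoke Millar to force a jump there, then use the strong Markov property at enumerated jump times together with Shtatland for the derivative claim. The paper streamlines two places where you take a detour. First, it treats both signs of $d$ at once: Shtatland makes zero irregular for exactly one of $(-\infty,0]$, $[0,\infty)$, and \emph{either} alternative in Millar's dichotomy already forces $X_\rho\ne X_{\rho-}$, so no ``WLOG $d>0$'' is needed. Your reduction ``$d<0$ follows by applying the result to $-X$'' is not as immediate as stated, because both $\mathcal{M}^-$ and $\mathcal{M}^+$ are defined via $X_t\wedge X_{t-}$; passing to $-X$ turns this into $X_t\vee X_{t-}$ and does not simply interchange the two sets (it can be made to work, but it needs unpacking).

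Second, when you appeal to Millar for the supremum you use the irregularity of $[0,\infty)$ for $-X$: that statement is about the argmin of $(-X)_t\wedge(-X)_{t-}$, i.e.\ the argmax of $X_t\vee X_{t-}$, which need not coincide with your $\sigma=\arg\sup X_t\wedge X_{t-}$. Your conclusion that $\mathcal{M}^+$ is empty is actually equivalent to the proposition's $\mathcal{M}^+$ assertion (membership in $\mathcal{M}^+$ forces $X_t=X_{t-}$ directly from the definition, so ``$X_t\ne X_{t-}$ on $\mathcal{M}^+$'' just says $\mathcal{M}^+=\emptyset$), but to justify it cleanly you should localize to the argmax of $X_t\vee X_{t-}$, where Millar actually applies.
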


\begin{proof}
Using the same notation and arguments as in 
Proposition~\ref{prop:localExtrema_unbounded}, it
suffices to show that $X_\rho \neq X_{\rho-}$ almost surely and that
\begin{equation}
\label{eq:global2}
\lim_{\eps \downarrow 0} \eps^{-1}(X_{\rho + \eps} - X_\rho) = d.
\end{equation}

A result of Millar states that any L\'evy
process for which zero is not regular for $[0,\infty)$ must jump out
of its global infimum and that any L\'evy process for which zero is
not regular for $(-\infty,0]$ must jump into its global infimum -- see
\cite[Theorem $3.1$]{millarzeroone}. By \eqref{eq:shtatland_statement},
$\lim_{t \downarrow 0} t^{-1} X_t = d$ almost surely, 
and so one of these alternatives must hold.
Hence, in either case, $X_\rho \neq X_{\rho-}$. 

Moreover, the fact that $\rho$ is a jump time of $X$
implies \eqref{eq:global2}. To see this, we argue as in
\cite{pitmanbravo}.
For $\delta>0$, let $0 < J_1^\delta < J_2^\delta < \ldots$ be the
successive nonnegative times at which $X$ has jumps of size greater
than $\delta$ in absolute value. The strong Markov property applied at
the stopping time $J_i^\delta$ and \eqref{eq:shtatland_statement} 
gives that
\[
\lim_{\eps \downarrow 0} \eps^{-1}
(X_{J_i^\delta+\eps}-X_{J_i^\delta}) = d.
\]
Hence, at any random time $V$ such that $X_{V} \ne X_{V-}$ almost
surely we have
\[
\lim_{\eps \downarrow 0} \eps^{-1} (X_{V+\eps}-X_{V}) = d .
\]
\end{proof}

\begin{prop}
\label{prop:localExtrema_zero}
Let $X$ be a L\'evy process with paths of bounded variation,
drift $d=0$, and  $\Pi(\reals) = \infty$. 
Then, $\lim_{\eps \downarrow 0} \eps^{-1}(X_{t + \eps} - X_t) = 0$ for all $t \in
\mathcal{M}^-$ and all $t \in \mathcal{M}^+$ almost surely. Moreover,
\begin{enumerate}[(i)]
\item If zero is not regular for $[0,\infty)$, then $X_t > X_{t-}$ for all $t \in
\mathcal{M}^-$ and $X_t = X_{t-}$ for all $t \in \mathcal{M}^+$ almost
surely.
\item If zero is not regular for $(-\infty,0]$, then $X_t = X_{t-}$ for all $t \in
\mathcal{M}^-$ and $X_t < X_{t-}$ for all $t \in \mathcal{M}^+$ almost
surely.
\item If zero is regular for both $(-\infty,0]$ and $[0,\infty)$,
then $X_t = X_{t-}$ for all $t \in \mathcal{M}^-$ and all $t \in \mathcal{M}^+$ almost
surely.
\end{enumerate}
\end{prop}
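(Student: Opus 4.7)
The proof will follow the same blueprint as Propositions~\ref{prop:localExtrema_unbounded} and~\ref{prop:localExtrema_bounded}. Let $(X^q_t)_{t \ge 0}$ denote $X$ killed at an independent exponential time $\xi$ with parameter $q > 0$, and set
\[
\rho := \arg\inf_{0 < t < \xi} X^q_t \wedge X^q_{t-}, \qquad
\sigma := \arg\sup_{0 < t < \xi} X^q_t \wedge X^q_{t-}.
\]
A localization argument along the lines of \cite[Theorem 1.3]{abrupt} then reduces each of the claims for all local infima (resp.\ local suprema) to the corresponding claim at $\rho$ (resp.\ at $\sigma$), so throughout I will prove the infimum version; the supremum version is symmetric via $X \mapsto -X$.

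For the jump-structure assertions in parts (i)--(iii), I invoke Millar's dichotomy \cite[Theorem 3.1]{millarzeroone}: the L\'evy process $X$ jumps out of its global infimum (i.e.\ $X_\rho > X_{\rho-}$) if and only if zero is not regular for $[0,\infty)$, and $X$ jumps into its global infimum (i.e.\ $X_\rho < X_{\rho-}$) if and only if zero is not regular for $(-\infty,0]$. Because $d = 0$ and $\Pi(\reals) = \infty$ preclude $X$ from being compound Poisson, zero is regular for at least one of the two half-lines, so exactly one of the three sub-cases occurs, yielding $X_\rho > X_{\rho-}$ in case (i), $X_\rho < X_{\rho-}$ in case (ii), and $X_\rho = X_{\rho-}$ in case (iii). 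Applying the same result to $-X$ (which swaps ``regular for $[0,\infty)$'' with ``regular for $(-\infty,0]$'' and interchanges ``jumps out'' with ``jumps into'') gives the corresponding assertions at $\sigma$.

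For the derivative claim $\lim_{\eps \downarrow 0} \eps^{-1}(X_{\rho+\eps} - X_\rho) = 0$, I split into two regimes. In cases (i) and (ii), $\rho$ is almost surely a jump time of $X$ by the previous paragraph, and the argument used in Proposition~\ref{prop:localExtrema_bounded} applies essentially verbatim with $d = 0$: Shtatland's identity \eqref{eq:shtatland_statement} together with the strong Markov property at each stopping time $J_i^\delta$ in the enumeration of jumps of size exceeding $\delta$ yields $\lim_{\eps \downarrow 0} \eps^{-1}(X_{V+\eps} - X_V) = 0$ almost surely at any random time $V$ that is almost surely a jump time. In case (iii), $\rho$ is a continuity point of $X$. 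Since $X_{\rho+s} \ge X_\rho$ for $0 < s < \xi - \rho$, one has $\liminf_{\eps \downarrow 0} \eps^{-1}(X_{\rho+\eps} - X_\rho) \ge 0$ for free, and the matching $\limsup \le 0$ will be obtained from Millar's post-minimum decomposition \cite{millarpostmin}: the post-$\rho$ process is a Markov process (a Doob $h$-transform of $X$) whose small-time behavior inherits Shtatland's limit $s^{-1} X_s \to 0$ from the underlying drift $d = 0$.

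The principal obstacle is the derivative claim in case (iii), because $\rho$ is not a stopping time and the strong Markov property is unavailable; the required small-time asymptotic for the post-infimum process is the non-trivial input, and one may alternatively argue via a time-reversal that converts the right-derivative at $\rho$ into a left-derivative governed by the dual drift, which is again zero. Once this is in place, the analogous assertions at $\sigma$ are immediate by applying the entire argument to $-X$.
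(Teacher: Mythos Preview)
Your framework---localization to the killed process and the times $\rho,\sigma$, Millar's dichotomy for parts (i)--(iii), and the Shtatland-at-jump-times argument for the derivative in cases (i) and (ii)---matches the paper's proof exactly.

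The divergence is in case (iii), where you correctly identify the obstacle but do not resolve it. Your plan is to invoke the post-minimum Markov structure and assert that the Doob $h$-transform ``inherits Shtatland's limit $s^{-1}X_s \to 0$''. This assertion is not justified in your proposal, and it is not obvious: the conditioning to stay above the minimum can in principle alter small-time rates, and making this rigorous for a bounded-variation, zero-drift process with infinite activity would require real work on the renewal/scale function of the ladder process. The time-reversal alternative you mention is likewise left as a gesture.

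The paper sidesteps this entirely with a tilting trick that reduces case (iii) to Proposition~\ref{prop:localExtrema_bounded}. Suppose $X_\rho = X_{\rho-}$ and $\liminf_{\eps \downarrow 0} \eps^{-1}(X_{\rho+\eps} - X_\rho) > \gamma > 0$. Then for any $0 < \gamma' < \gamma$ the time $\rho$ is also a local infimum of the tilted process $(X_t - \gamma' t)_{t \ge 0}$, which has bounded variation and strictly negative drift $-\gamma'$. Proposition~\ref{prop:localExtrema_bounded} then forces a jump at $\rho$, contradicting $X_\rho = X_{\rho-}$. Hence the $\liminf$ is $0$. This is a one-line reduction to an already-proved result, with no need to analyze the post-minimum process at all; it is the missing idea in your sketch.
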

\begin{proof}
Note that since $\Pi(\reals) = \infty$, zero must be regular for at
least one of $(-\infty,0]$ and $[0,\infty)$. Results (i), (ii) and
(iii) are direct consequences of \cite[Theorem 3.1]{millarzeroone}.

Arguing as in Proposition \ref{prop:localExtrema_bounded}, we get that
$\lim_{\eps \downarrow 0} \eps^{-1}(X_{t + \eps} - X_t) = 0$ for any
time $t$ such that $X_t \neq X_{t-}$. Using the notation of
Propositions~\ref{prop:localExtrema_unbounded}
and \ref{prop:localExtrema_bounded},
suppose that $X_\rho = X_{\rho-}$  and
$\liminf_{\eps \downarrow 0} \eps^{-1}(X_{\rho + \eps} - X_\rho)>
\gamma > 0$. Then, for all $0 < \gamma' < \gamma$ the time $\rho$ is
the time of a local infimum of the process $(X_t - \gamma' t)_{t \ge
  0}$. Since the drift coefficient of this modified process is less
than zero for all such $\gamma'$, the path of $X$ must jump at time
$\rho$, which is a contradiction. Hence, $\liminf_{\eps \downarrow 0}
\eps^{-1}(X_{\rho + \eps} - X_\rho) = 0$.
\end{proof}

\begin{thm}
\label{thm:newLambda}
Let $X$ be a L\'evy process that satisfies our standing assumptions
Hypothesis~\ref{H:standing} and $\Pi(\reals) = \infty$.
Then, $\Lambda(\reals_+) < \infty$ if and only if
\begin{equation}
\label{eq:finint}
\int_0^1 t^{-1} \P\{X_t \in [-\alpha t, \alpha t]\} \, dt < \infty .
\end{equation}
\end{thm}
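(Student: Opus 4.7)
The plan is to translate $\Lambda(\reals_+) < \infty$ into a local-sharpness condition on the paths of $X$ at the contact points, and then to invoke Propositions~\ref{prop:localExtrema_unbounded}, \ref{prop:localExtrema_bounded}, and \ref{prop:localExtrema_zero} (together with a time-reversal) to express this sharpness as the integral condition \eqref{eq:finint}. First I would dispose of the case $\delta > 0$: Remark~\ref{R:Lambda_finite_discussion} combined with $\Pi(\reals) = \infty$ forces $\Lambda(\reals_+) = \infty$, while $\delta > 0$ is equivalent to $\P\{K = 0\} > 0$, which by \eqref{p0} gives $\int_0^\infty t^{-1} \P\{X_t \notin [-\alpha t, \alpha t]\}\, dt < \infty$; since $\int_0^1 t^{-1}\, dt = +\infty$, this forces \eqref{eq:finint} to fail as well. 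Hence both sides of the claimed equivalence fail when $\delta > 0$, and we may assume $\delta = 0$, under which $\Lambda(\reals_+) < \infty$ amounts to the contact set $\mathcal{Z}$ being a discrete (locally finite) subset of $\reals$ almost surely, i.e.\ every $t \in \mathcal{Z}$ being isolated on both sides.

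Next, by Lemma~\ref{L:sawtooth} the minorant $M$ is piecewise linear of slope $\pm \alpha$ on each gap of $\mathcal{Z}$, so $t \in \mathcal{Z}$ is right-isolated if and only if
\[
\liminf_{s \downarrow 0} s^{-1}\bigl(X_{t+s} - (X_t \wedge X_{t-})\bigr) \ge \alpha ,
\]
with an analogous characterization of left-isolation. Since $\mathcal{Z} \subseteq \mathcal{M}^-$, I would then apply the local-extrema propositions. In the unbounded-variation case, Proposition~\ref{prop:localExtrema_unbounded} with $r = \alpha$ gives that the right-sided sharpness at all $t \in \mathcal{M}^-$ is equivalent to $\int_0^1 t^{-1}\P\{X_t \in [0, \alpha t]\}\, dt < \infty$. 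For left-sharpness I would use the time-reversed L\'evy process $\check X_s := -X_{(-s)-}$, which, by a direct calculation using stationary increments together with $X_{-t} \ed -X_t$, is itself a L\'evy process with the same law as $X$ and whose local maxima correspond under $s \mapsto -s$ to the local minima of $X$; applying Proposition~\ref{prop:localExtrema_unbounded} with $r = \alpha$ and condition \eqref{eq:r_minus} to $\check X$ and translating through this correspondence gives that left-sharpness at all $t \in \mathcal{M}^-$ is equivalent to $\int_0^1 t^{-1}\P\{X_t \in [-\alpha t, 0]\}\, dt < \infty$. Adding the two integrals produces exactly \eqref{eq:finint}. The bounded-variation subcases are handled analogously via Propositions~\ref{prop:localExtrema_bounded} and \ref{prop:localExtrema_zero} together with \eqref{eq:shtatland_statement}: the growth rate at every local extremum is then the deterministic constant $d$ or $0$, and a direct check matches the success or failure of \eqref{eq:finint} with the success or failure of contact-point isolation.

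The main obstacle is the passage from ``sharpness at every $t \in \mathcal{M}^-$'' (what the propositions control) to ``sharpness at every $t \in \mathcal{Z}$'' (what is needed for $\Lambda(\reals_+) < \infty$). One direction is immediate because $\mathcal{Z} \subseteq \mathcal{M}^-$: if \eqref{eq:finint} holds then every local infimum is sharp, so every contact point is isolated and $\Lambda(\reals_+) < \infty$. For the converse I would invoke the regenerative structure of Theorem~\ref{thm:Zregen}: if \eqref{eq:finint} fails, the propositions produce, with positive probability, a local infimum at which the sharpness condition fails, and the strong Markov property applied at a contact time $\tau \in \mathcal{Z}$ --- combined with the description in the proof of Theorem~\ref{thm:Zregen} of the post-$\tau$ path via the shifted L\'evy process $\tilde X$ --- propagates this distributional failure with positive probability to contact times themselves, producing accumulation of contact points near $\tau$ and forcing $\Lambda(\reals_+) = \infty$.
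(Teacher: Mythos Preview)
Your overall plan---reduce $\Lambda(\reals_+)<\infty$ to an isolation question at contact points and then invoke the local-extrema propositions---is the paper's strategy, but there is a genuine gap in the step ``$\liminf \ge \alpha$ at $t\in\mathcal{Z}$ implies $t$ is right-isolated.''  The sawtooth lemma gives only the forward implication: if $t$ is right-isolated, then $M_s=M_t+\alpha(s-t)$ for $s$ slightly larger than $t$, whence $X_{t+s}-(X_t\wedge X_{t-})\ge\alpha s$.  The converse fails: the bound $\liminf_{s\downarrow 0}s^{-1}\bigl(X_{t+s}-(X_t\wedge X_{t-})\bigr)\ge\alpha$ does \emph{not} preclude a sequence $t_n\downarrow t$ in $\mathcal{Z}$ along which $M_{t_n}-M_t\le\alpha(t_n-t)$ with equality.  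What isolation actually requires is that for some $\eps>0$ the path lie \emph{strictly} above the line of slope $\alpha$ through $(t,X_t\wedge X_{t-})$ on $(0,\eps]$.  Proposition~\ref{prop:localExtrema_unbounded} with $r=\alpha$ yields only the non-strict $\liminf$ bound, and this is exactly why the paper (its Case~8: unbounded variation, \eqref{eq:finint} holds, but $\int_0^1 t^{-1}\P\{X_t\in[-\alpha t,(\alpha+\gamma)t]\}\,dt=\infty$ for every $\gamma>0$) introduces a separate argument using the Poisson structure of the convex minorant of $(X_t+\alpha t)_{0\le t\le\xi}$ together with Millar's zero-one law at the infimum and uniqueness of the infimum value, in order to upgrade the non-strict bound to the strict inequality that forces $D'>D$.

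A second, more easily remedied, issue: you invoke $\mathcal{Z}\subseteq\mathcal{M}^-$ (local infima of $X$) and apply the propositions to $X$ with $r=\alpha$.  That containment is only clear once contact points are already known to be isolated, which is circular in the ``if'' direction.  The paper avoids this by observing that $D$ is always a local infimum of the \emph{shifted} process $(X_t+\alpha t)_{t\ge0}$ (it is the arg\,inf over $[S,\infty)$) and applying the propositions to that process with $r=2\alpha$; the resulting integral condition $\int_0^1 t^{-1}\P\{X_t+\alpha t\in[0,2\alpha t]\}\,dt<\infty$ is exactly \eqref{eq:finint}.  Your reduction of the $\delta>0$ case via \eqref{p0} and Remark~\ref{R:Lambda_finite_discussion} is correct, and your ``only if'' sketch has the right shape, but it too becomes clean once routed through the shifted process rather than through the unproved inclusion $\mathcal{Z}\subseteq\mathcal{M}^-$.
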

\begin{proof}
We break the proof up into the consideration of a number of cases.
All of the cases except the first
rely on the facts that $D$ is the time of a
local infimum of the process $(X_t + \alpha t)_{t \ge 0}$ and that $G$
is the time of a local infimum of the process $(X_{-t} + \alpha t)_{t \ge 0}$.

\medskip\noindent
{\bf Case 1:} 
{\em The process $X$ has paths of bounded variation almost surely 
and $|d| < \alpha$.} 

Suppose that $0 \le d < \alpha$.  By \eqref{eq:shtatland_statement},
$\lim_{t \downarrow 0} t^{-1} X_t = d$.  Therefore,
zero is regular for $(-\infty,0]$ for the modified process
$(X_t - \alpha t)_{t \ge 0}$ but not for the modified process
$(X_t + \alpha t)_{t \ge 0}$.
Rogozin's regularity criterion \eqref{eq:rogozin_regularity} gives that
\[
\int_0^1 t^{-1} \P\{X_t - \alpha t \le 0\} \, dt = \infty
\]
and
\[
\int_0^1 t^{-1} \P\{X_t + \alpha t \le 0\} \, dt < \infty.
\]
Hence,
\[
\begin{split}
& \int_0^1 t^{-1} \P\{X_t \in [-\alpha t, \alpha t] \, dt \\
& \quad =
\int_0^1 t^{-1} \P\{X_t \le \alpha t \} \, dt
-
\int_0^1 t^{-1} \P\{X_t \le -\alpha t \} \, dt \\
& \quad = \infty,
\end{split}
\]
and the inequality \eqref{eq:finint} fails.
Note by Theorem~\ref{thm:Zmeasure}
that $\delta > 0$, and hence $\Lambda(\reals_+) = \infty$
-- see Remark~\ref{R:Lambda_finite_discussion}.
The proof for $-\alpha <  d \le 0$ is similar.

\medskip\noindent
{\bf Case 2:} 
{\em The process $X$ has paths of bounded variation almost surely 
and  $|d| > \alpha$.}

Suppose that $d > \alpha$.  By \eqref{eq:shtatland_statement},
$\lim_{t \downarrow 0} t^{-1} X_t = d$, and so zero is not
regular for $(-\infty,0]$ for the modified process
$(X_t - \alpha t)_{t \ge 0}$.  It follows from
\eqref{eq:rogozin_regularity} that
\[
\int_0^1 t^{-1} \P\{X_t - \alpha t \le 0\} \, dt < \infty. 
\]
Hence, 
\[
\int_0^1 t^{-1} \P\{X_t \le \alpha t \} \, dt < \infty 
\]
and the
inequality \eqref{eq:finint} certainly holds. 
It follows from
Proposition \ref{prop:localExtrema_bounded} that 
$\liminf_{\eps \downarrow 0} \eps^{-1}((X_{D +  \eps}+\alpha \eps)-X_D) > 2 \alpha$ a.s. 
Thus, if
\[
D' : = \inf \{ t > D : t \in \mathcal{Z} \}, 
\]
then $D'>D$ a.s. The regenerative property of
$\mathcal{Z}$ implies that $\mathcal{Z}$ is discrete, 
and hence $\Lambda(\reals_+)<\infty$.  The proof for
$d<-\alpha$ is similar.
%
%Millar's zero-one law
%at the minimum of a L\'evy process \cite[Theorem 3.1]{millarzeroone}
%then implies that $D'>D$ a.s. It now follows from
%the regenerative property of
%$\mathcal{Z}$ that $\mathcal{Z}$ is discrete almost surely
%and hence $\Lambda(\reals_+)<\infty$.

%{\tt XXX I think there are problems with the following paragraph.
%There are $t$ that should be $(t-D)$ etc.}
%We note that the implication $D'>D$ a.s. may
%also be seen more explicitly as follows. 
%Because $\liminf_{\eps \downarrow 0} \eps^{-1}((X_{D +  \eps}+\alpha \eps)-X_D
%\wedge X_{D-}) > 2 \alpha$ a.s.,  there a.s. exists $\eps > 0$ such
%that $X_t > X_D + \alpha (t-D)$ for all $t \in (0,\eps]$. Then, since
%$X_D = \inf_{t \ge D} \{ X_t + \alpha t \}$ and since the global infimum
%of a L\'evy processes that is not a compound Poisson processes with zero
%drift is unique \cite[Proposition VI.4]{bertoin}, there a.s. exists a
%$\delta > 0$ such that $(X_t + \alpha (t-D)) - \delta > 0$ for all $t > D
%+ \eps$. This implies that $D' \geq \delta / 2 \alpha$, and we can
%conclude that $D'>D$ a.s.

\medskip\noindent
{\bf Case 3:}
{\em The process $X$ has paths of bounded variation almost surely,
$d=-\alpha$, and zero is not regular for $(-\infty,0]$ for the
modified process $(X_t + \alpha t )_{t \ge 0}$.}

By \eqref{eq:shtatland_statement},
$\lim_{\epsilon \downarrow 0}\eps^{-1} (X_t - \alpha t) = -2 \alpha$,
and so zero is not regular for $[0,\infty)$ for the
modified process $(X_t - \alpha t )_{t \ge 0}$.
It follows from \eqref{eq:rogozin_regularity} that
\[
\int_0^1 t^{-1} \P\{X_t + \alpha t \le 0\} \, dt < \infty 
\]
and
\[
\int_0^1 t^{-1} \P\{X_t - \alpha t \ge 0\} \, dt < \infty. 
\]
Hence,
\[
\begin{split}
& \int_0^1 t^{-1} \P\{X_t \in [-\alpha t, \alpha t] \, dt \\
& \quad =
\int_0^t t^{-1} \, dt
- \int_0^1 t^{-1} \P\{X_t \le -\alpha t \} \, dt
- \int_0^1 t^{-1} \P\{X_t \ge \alpha t \} \, dt \\
& \quad = \infty,
\end{split}
\]
and the inequality \eqref{eq:finint} fails.
Theorem~\ref{thm:Zmeasure}
implies that $\delta > 0$ and hence $\Lambda(\reals_+) =
\infty$ -- see Remark~\ref{R:Lambda_finite_discussion}.

\medskip\noindent
{\bf Case 4:}
{\em The process $X$ has paths of bounded variation almost surely,
$d=-\alpha$, and zero is regular for both $(-\infty,0]$ 
and $[0,\infty)$ for the
modified process $(X_t + \alpha t )_{t \ge 0}$.}

As in Case 3,
\[
\int_0^1 t^{-1} \P\{X_t - \alpha t \ge 0\} \, dt < \infty. 
\]
Also,
\[
\int_0^1 t^{-1} \P\{X_t + \alpha t \le 0\} \, dt = \infty 
\]
and
\[
\int_0^1 t^{-1} \P\{X_t + \alpha t \ge 0\} \, dt = \infty. 
\]
Hence,
\[
\begin{split}
& \int_0^1 t^{-1} \P\{X_t \in [-\alpha t, \alpha t] \, dt \\
& \quad =
\int_0^1 t^{-1} \P\{X_t \ge - \alpha t \} \, dt
-
\int_0^1 t^{-1} \P\{X_t \ge  \alpha t \} \, dt \\
& \quad = \infty,
\end{split}
\]
and inequality \eqref{eq:finint} fails.
Note that $\delta = 0$
by Theorem~\ref{thm:Zmeasure}, so if $\Lambda(\reals_+) < \infty$,
then $M_t = X_D + \alpha (t-D)$ for $0 \le t \le \eps$ for
some $\eps > 0$, but
$\liminf_{\eps \downarrow 0} \eps^{-1}((X_{D +  \eps}+\alpha \eps)-X_D)  = 0$ a.s.
by Proposition \ref{prop:localExtrema_zero}. 
Thus, we must have $\Lambda(\reals_+) = \infty$.

\medskip\noindent
{\bf Case 5:}
{\em The process $X$ has paths of bounded variation almost surely,
$d=-\alpha$, zero is regular for $(-\infty,0]$ 
and not regular for $[0,\infty)$ for the modified
process $(X_t + \alpha t )_{t \ge 0}$.}

Similarly to Case 4,
\[
\int_0^1 t^{-1} \P\{X_t - \alpha t \ge 0\} \, dt < \infty, 
\]
\[
\int_0^1 t^{-1} \P\{X_t + \alpha t \le 0\} \, dt = \infty, 
\]
and
\[
\int_0^1 t^{-1} \P\{X_t + \alpha t \ge 0\} \, dt < \infty. 
\]
In particular,
\[
\int_0^1 t^{-1} \P\{X_t \ge - \alpha t\} \, dt < \infty, 
\] 
and the inequality \eqref{eq:finint} holds.
Proposition \ref{prop:localExtrema_zero} gives that
$X_D > X_{D-}$ a.s. Thus, $D'>D$ a.s. The regenerative property of
$\mathcal{Z}$ implies that $\mathcal{Z}$ is discrete, 
and hence $\Lambda(\reals_+)<\infty$.

\medskip\noindent
{\bf Case 6:}
{\em The process $X$ has paths of bounded variation almost surely and
$d=\alpha$.}

This is handled by considering the behavior at $G$ for
the time reversed process in the manner of Cases 3,4, and 5.

\medskip\noindent
{\bf Case 7:}
{\em The process $X$ has paths of unbounded variation almost surely
and
$\int_0^1 t^{-1} \P\{X_t \in [-\alpha t, (\alpha+\gamma) t]\} \, dt <
\infty$ for some $\gamma > 0$.}

Proposition
\ref{prop:localExtrema_unbounded} gives that
$\liminf_{\eps \downarrow 0} \eps^{-1}((X_{D + \eps}+\alpha \eps)-X_D)
\geq 2 \alpha + \gamma$ a.s., which, as in Case 2, implies that $D' > D$
a.s. and hence $\Lambda(\reals_+) < \infty$.
%If the integral in \eqref{eq:finint} is infinite,
%then Proposition \ref{prop:localExtrema_unbounded} implies that
%$\liminf_{\eps \downarrow 0} \eps^{-1}((X_{D + \eps}+\alpha \eps)-X_D)
%< 2 \alpha $ a.s., which would not be possible if $\Lambda(\reals_+) <
%\infty$. Thus, $\Lambda(\reals_+) = \infty$.

\medskip\noindent
{\bf Case 8:}
{\em The process $X$ has paths of unbounded
variation, \eqref{eq:finint} holds, but 
$\int_0^1 t^{-1} \P\{X_t \in [-\alpha t, (\alpha+\gamma) t]\} \, dt =
\infty$ for every $\gamma > 0$.}

To get to the desired result that
$\Lambda(\reals_+) < \infty$ we introduce a new technique involving
convex minorants of L\'evy processes. 

As before, let $(X^q_t)_{t \ge 0}$ be a copy of $(X_t)_{t \ge 0}$ but
killed at an independent exponential time $\xi$ with parameter $0 < q <
\infty$, and let $\rho = \arg \inf_{0 < t < \xi} X_t \wedge X_{t-}$. 

By results of Pitman and Uribe Bravo on the convex minorant of the path of a
L\'evy process \cite[Corollary 2]{pitmanbravo}, the linear segments
of the convex minorant of the process $(X_t + \alpha t)_{0 \leq t \leq \xi}$ 
define a Poisson point process on $\reals_+ \times \reals$,
where a point at $(t, x)$ represents a segment with length $t$ and
increment $x$. The intensity measure of the Poisson point process is
the measure on $\reals_+ \times \reals$ given by
\begin{equation}
\label{eq:intensity_expression}
e^{-qt} t^{-1} \P \{ X_t + \alpha t \in dx \} dt.
\end{equation}
In order to recreate the convex minorant from the point process, the
segments are arranged in increasing order of slope. Note that the
convex minorant after time $\rho$ can be recreated by only piecing
together the segments of positive slope.

Let $\mathcal{I}$ be the infimum of the slopes of all segments of
the convex minorant of $(X_t + \alpha t)_{0 \leq t \leq \xi}$ that have positive
slope. Under the assumption \eqref{eq:finint}, it follows from
\eqref{eq:intensity_expression} that
\[
\P \{ \mathcal{I} \geq 2 \alpha \} = \exp \left( - \int_0^\infty
  e^{-qt} t^{-1} \P\{ X_t + \alpha t \in [0, 2 \alpha t] \} \, dt \right) > 0 .
\]
Thus, with positive probability, there exists $\eps>0$ such that
$(X_{\rho + t} + \alpha t)-X_\rho \geq 2 \alpha t$ for all $0 \leq t
\leq \eps$.  Hence by Millar's zero-one law at the infimum of a L\'evy
process, such an $\eps$ exists almost surely.  By the almost sure uniqueness
of the value of the infimum of a L\'evy process that is not a compound
Poisson process with zero drift \cite[Proposition VI.4]{bertoin},
almost surely there exists $\eps>0$ such that $(X_{\rho + t} + \alpha t)-X_\rho
> 2 \alpha t$ for all $0 < t \leq \eps$.

Using the same localization argument as before, this behavior extends
to all local infima almost surely, and hence is true at time $D$ for
the process $(X_t + \alpha t)_{t \ge 0}$, which allows us to conclude that
$D' > D$ a.s. Since $\delta = 0$, this implies $\Lambda(\reals_+) < \infty$.
\end{proof}

\begin{remark}
  An example of a process satisfying our standing assumptions for
  which \eqref{eq:finint} fails for all $\alpha >0$ is given by truncating
  the L\'evy measure of the symmetric Cauchy process to remove all
  jumps with magnitude greater than $m$, so that the L\'evy measure
  becomes $1_{|x| \le m}x^{-2} \, dx$. To see this, first note that
  \eqref{eq:finint} fails for the symmetric Cauchy process because, by
  the self-similarity properties of this process, the probability that
  it lies in an interval of the form $(at, bt)$ at time $t > 0$ does
  not depend on $t$ and $\int_0^1 t^{-1} \, dt = \infty$.  Then
  observe that the difference between the probabilities that the
  truncated process and the symmetric Cauchy processes lie in some
  interval at time $t$ is at most the probability that the symmetric
  Cauchy process has at least one jump of size greater than $m$ before
  time $t$.  The latter probability is $1-e^{-\lambda t}$ with
  $\lambda = 2 \int_m^\infty x^{-2} \, dx < \infty$, and $\int_0^1
  t^{-1}(1-e^{-\lambda t}) \, dt < \infty$.
\end{remark}

\begin{remark}
If $X$ is a symmetric $\beta$-stable process for $1 < \beta \le 2$,
then \eqref{eq:finint} holds for all $\alpha >0$.  To see this,
first note that $X_1$ has a bounded density. Hence, by scaling,
$\P\{X_t \in [-\alpha t, \alpha t]\} = \P\{t^{1/\beta} X_1 \in
[-\alpha t, \alpha t]\} \le c t^{1-1/\beta}$
for some constant $c$ depending on $\alpha$, and then observe
that $\int_0^1 t^{-1} t^{1-1/\beta} \, dt < \infty$. Further cases
where \eqref{eq:finint} holds for all $\alpha >0$ are discussed in
Remark~\ref{rem:abrupt}.
\end{remark}

The technique introduced at the end of the previous proof allows a
strengthening of Propositions~\ref{prop:localExtrema_unbounded}
and~\ref{prop:localExtrema_bounded}, which we state only for local
infima but has a clear counterpart for local suprema.  The result also
covers much of the information from Proposition
\ref{prop:localExtrema_zero} but does not strengthen it.

\begin{thm}
\label{thm:localExtrema}
Let $X$ be an L\'evy process such that $\sigma \neq 0$ or
$\Pi(\reals) < \infty$.
Define
\[
r^* := \sup \{ r \geq 0 : \int_0^1 t^{-1} \P\{X_t \in [0, rt]) dt < \infty \}
\]
Then,
\[
\liminf_{\eps \downarrow 0} \eps^{-1}(X_{t + \eps} - X_t \wedge X_{t-}) = r^*
\]
for all $t \in \mathcal{M}^-$ almost surely. Moreover, define, for $r 
\geq 0$ and $t \geq 0$,
\[
T_t^{(r)} := \inf \{ s > 0 :  X_{t + s} - X_t \wedge X_{t-} \leq r s \} .
\]
If $\int_0^1 t^{-1} \P\{X_t \in [0, r^*t]\} \, dt < \infty$, then $T_t^{(r^*)} > 0$ for all $t \in \mathcal{M}^-$ almost surely.
\end{thm}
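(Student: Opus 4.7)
The plan is to localize to the argmin $\rho$ of a killed version of $X$ and then combine the integral criteria from Propositions~\ref{prop:localExtrema_unbounded},~\ref{prop:localExtrema_bounded}, and~\ref{prop:localExtrema_zero} with the convex-minorant technique introduced at the end of the proof of Theorem~\ref{thm:newLambda} (Case~8). Let $(X^q_t)_{t \ge 0}$ be a copy of $X$ killed at an independent exponential time $\xi$ of rate $q > 0$, and set $\rho := \arg\inf_{0 < t < \xi} X^q_t \wedge X^q_{t-}$. By the localization argument of \cite[proof of Theorem~1.3]{abrupt}, which covers $\reals$ by countably many shifted, re-killed copies and applies the strong Markov property, it suffices to establish both claims at the single time $\rho$; moreover, by Millar's zero-one law at the infimum \cite{millarzeroone}, each event in question has probability $0$ or $1$, so it is enough to exhibit positive probability of the desired behaviour.

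For the lower bound $\liminf_{\eps \downarrow 0} \eps^{-1}(X_{\rho+\eps} - X_\rho \wedge X_{\rho-}) \ge r^*$ I would apply the Pitman--Uribe Bravo description \cite[Corollary~2]{pitmanbravo}: the linear pieces of the convex minorant of $(X_t)_{0 \le t \le \xi}$ form a Poisson point process on $\reals_+ \times \reals$ with intensity $e^{-qt} t^{-1} \P\{X_t \in dx\}\,dt$, and the infimum $\mathcal{I}$ of the positive slopes of these pieces satisfies
\[
\P\{\mathcal{I} \ge r\} = \exp\!\left(-\int_0^\infty e^{-qt} t^{-1}\P\{X_t \in (0,rt)\}\,dt\right).
\]
For every $r < r^*$ the integrand is integrable near zero by the definition of $r^*$ and is bounded by $e^{-qt} t^{-1}$ on $[1,\infty)$, hence $\P\{\mathcal{I} \ge r\} > 0$ and equals $1$; since $X$ dominates its convex minorant and the first post-$\rho$ piece has slope $\mathcal{I} \ge r$ and positive length $\tau_1$, this yields $X_{\rho+\eps} - X_\rho \ge r\eps$ on $(0,\tau_1]$. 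Letting $r \uparrow r^*$ gives $\liminf \ge r^*$. The matching inequality $\liminf \le r^*$ comes from the contrapositive of the earlier propositions: for $r > r^*$ the integral \eqref{eq:r_plus} diverges, so in the unbounded variation case Proposition~\ref{prop:localExtrema_unbounded} gives $\liminf < r$, while in the compound Poisson case ($\sigma=0$, $\Pi(\reals)<\infty$) Proposition~\ref{prop:localExtrema_bounded} pins $\lim$ to $d$ which a direct evaluation of the defining integral shows to equal $r^*$.

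Finally, for $T^{(r^*)}_\rho > 0$ the convex-minorant argument is run at the critical rate $r = r^*$: the assumption $\int_0^1 t^{-1}\P\{X_t \in [0,r^*t]\}\,dt < \infty$, together with $\P\{X_t = 0\} = 0$ for $t>0$ (which holds under the standing hypothesis in the cases where $r^*$ is well-defined and positive), keeps the exponent in the displayed formula finite, so $\P\{\mathcal{I} > r^*\} > 0$ and is therefore $=1$ by the zero-one law. The first post-$\rho$ piece then has positive length $\tau_1$ and slope strictly greater than $r^*$, and dominating $X$ by its convex minorant yields $X_{\rho+\eps} - X_\rho > r^*\eps$ on $(0,\tau_1]$, so $T^{(r^*)}_\rho \ge \tau_1 > 0$. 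The main obstacle is that $\mathcal{I}$ strictly under-estimates the $\liminf$ in general (for Brownian motion $\mathcal{I}$ is an a.s.\ finite exponential random variable while the $\liminf$ equals $+\infty$), so the convex-minorant computation supplies only one direction of the equality $\liminf=r^*$; the reverse direction must be extracted from the integral criteria of Propositions~\ref{prop:localExtrema_unbounded}--\ref{prop:localExtrema_zero}, which themselves rest on Vigon's delicate analysis at the infimum \cite[Proposition~3.6]{abrupt}.
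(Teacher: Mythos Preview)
Your localization to $\rho$, the convex-minorant computation for the lower bound, and the argument for $T_\rho^{(r^*)}>0$ all match the paper essentially line for line. The one substantive divergence is the upper bound $\liminf\le r^*$: you assert that the convex-minorant technique ``supplies only one direction'' and fall back on Propositions~\ref{prop:localExtrema_unbounded}--\ref{prop:localExtrema_zero} (hence on Vigon's \cite[Proposition~3.6]{abrupt}) for the reverse inequality. The paper instead extracts the upper bound from the convex minorant as well: for $r>r^*$ the Poisson intensity of segments with slope in $(0,r]$ is infinite, so $\P\{\mathcal I\ge r\}=0$, and the presence of segments of positive slope $\le r$ accumulating at $\rho$ forces $T_\rho^{(r)}=0$ and hence $\liminf\le r$. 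Thus the paper's proof is more self-contained and does not invoke Vigon's analysis for either direction, whereas your route, while workable, imports the harder half of Proposition~\ref{prop:localExtrema_unbounded} for the unbounded-variation case.

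Two smaller points. First, the zero--one law applies to the germ event $\{\exists\,\eps>0:\,X_{\rho+s}-X_\rho\ge rs\text{ on }(0,\eps]\}$, not to $\{\mathcal I\ge r\}$ itself; your sentence ``$\P\{\mathcal I\ge r\}>0$ and equals $1$'' conflates the two (for Brownian motion $\mathcal I$ is a genuine random variable, as you note later). Second, in the compound-Poisson case your claim ``$\lim=d=r^*$'' is only correct when $d>0$; when $d<0$ one has $r^*=+\infty$ and the relevant quantity is $\eps^{-1}(X_{t+\eps}-X_t\wedge X_{t-})$, which blows up because the process jumps upward out of its local infimum, not because of Proposition~\ref{prop:localExtrema_bounded} (which tracks $X_{t+\eps}-X_t$).
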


\begin{proof}
As usual, we need only show that the given properties hold at time
$\rho = \arg \inf_{0 < t < \xi} X^q_t \wedge X^q_{t-}$. 
Suppose first that $\int_0^1 t^{-1} \P\{X_t \in [0, r^*t]\} \, dt < \infty$. 
Let $\mathcal{I}$ be the infimum of the slopes of all segments of
the convex minorant of $(X_t)_{0 \leq t \leq \xi}$ that have positive
slope. It follows from \eqref{eq:intensity_expression} that
\[
\P \{ \mathcal{I} \geq r^* \} = \exp \left( - \int_0^\infty
  e^{-qt} t^{-1} \P\{ X_t \in [0, r^* t] \} \, dt \right) > 0 .
\]
Thus, with positive probability, there exists $\eps>0$ such that
$X_{\rho + t}-X_\rho \geq r^* t$ for all $0 \leq t
\leq \eps$.  Hence, by Millar's zero-one law at the infimum of a L\'evy
process, such an $\eps$ exists almost surely.  By the almost sure uniqueness
of the value of the infima of a L\'evy process that is not a compound
Poisson process with zero drift \cite[Proposition VI.4]{bertoin},
there exists almost surely $\eps>0$ such that $X_{\rho+t}-X_\rho
> r^* t$ for all $0 < t \leq \eps$. Hence, $T_\rho^{(r^*)} > 0$ a.s.

For any $0 \leq r < r^*$  we have 
$\int_0^1 t^{-1} \P\{X_t \in [0, rt]\} \, dt < \infty$.
Applying the above argument gives that $T^{(r)}_\rho = 0$ almost
surely, and thus
\[
\liminf_{\eps \downarrow 0} \eps^{-1}(X_{t + \eps} - X_t \wedge
X_{t-}) \geq r
\]
for all $t \in \mathcal{M}^-$ almost surely, for all $0 \leq r < r^*$.

For any $r > r^*$  we have 
$\int_0^1 t^{-1} \P\{X_t \in [0, rt]\} \, dt = \infty$, and hence $\P\{
\mathcal{I} \geq r\} = 0$. Since the convex minorant of $(X_t)_{0 \le t
  \le \xi}$ almost surely contains linear segments with positive slope
less that or equal to $r$, it follows that $T_t^{(r)} = 0$ almost
surely. Hence, for all $r > r^*$,
\[
\liminf_{\eps \downarrow 0} \eps^{-1}(X_{t + \eps} - X_t \wedge
X_{t-}) \leq r
\]
for all $t \in \mathcal{M}^-$ almost surely.
\end{proof}

\begin{remark}
\label{rem:howtotest}
The value of $r^*$ is infinite when $X$ has non-zero Brownian
component or is a stable process with stability parameter in the
interval $(1,2]$ -- see the discussion of \emph{abrupt} processes in
Section~\ref{sec:limit}. Vigon provides in an unpublished work
\cite{vigon_unpublished} a 
practical method for determining whether the integral in
Theorem~\ref{thm:localExtrema} is finite or not for processes with paths of
unbounded variation:
\[
\int_0^\infty t^{-1} e^{-qt} \P\{X_t \in [at, bt]\} \, dt = \frac{1}{2 \pi}
\int_a^b \left( \int_0^\infty \mathfrak{R} \frac{1}{\Psi(-u) + iur} \,
  du \right) \, dr,
\]
where $\Psi$ is as defined in Section~\ref{sec:defs}.
\end{remark}

% xxxxx - need to check if it really is only for unbounded variation
% or whether we can easily get in the case |d| > \alpha as well.

In Section~\ref{sec:proofs} we prove the following result, which
characterizes the subordinator associated with $\mathcal{Z}$ when $X$
has paths of unbounded variation and satisfies certain extra
conditions. 
In Corollary~\ref{cor:boundeddensities} we show that
these extra conditions hold
 when $X$ has non-zero Brownian component.
Note that the conclusion $\delta = 0$ in the result
follows from Remark~\ref{R:Zmeasure}(i).

\begin{thm}
\label{thm:bigthm}
Let $X$ be a L\'evy process with paths of unbounded variation almost
surely that satisfies our standing assumptions
Hypothesis~\ref{H:standing}. Suppose further that $X_t$ has absolutely
continuous distribution for all $t \neq 0$, and that 
the densities of the
random variables $\inf_{t \ge 0} \{ X_t + \alpha t \}$ and $\inf_{t
\ge 0} \{ X_{-t} + \alpha t \}$ are square integrable.
Then, $\delta = 0$ and $\Lambda$ is characterized by
\[
\begin{split}
& \frac{\int_{\reals_+} (1-e^{-\theta x}) \, \Lambda( dx )} 
{\int_{\reals_+} x \, \Lambda( dx )}\\
& \quad = 4 \pi \alpha 
\int_{-\infty}^\infty
\Biggl\{
\exp \biggl(
 \int_0^\infty 
t^{-1} 
\E\Bigl[
\left(e^{i z X_t - i z \alpha t}   - 1\right)
\mathbf{1}\{X_t \ge + \alpha t\} \\
& \quad \qquad +
\left(e^{i z X_t + i z \alpha t}   - 1\right)
\mathbf{1}\{X_t \le - \alpha t\}
\Bigr]
\, dt 
\biggr) \\
& \qquad - 
\exp \biggl(
 \int_0^\infty 
t^{-1} 
\E\Bigl[
\left(e^{- \theta t + i z X_t - i z \alpha t}   - 1\right)
\mathbf{1}\{X_t \ge + \alpha t\} \\
& \qquad \qquad +
\left(e^{- \theta t + i z X_t + i z \alpha t}   - 1\right)
\mathbf{1}\{X_t \le - \alpha t\}
\Bigr]
\, dt 
\biggr)
\Biggr\}
\, dz \\
\end{split}
\]
for $\theta \ge 0$, and, moreover, $\Lambda(\reals_+) < \infty$.
\end{thm}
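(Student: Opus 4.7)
The plan is to relate the L\'evy measure $\Lambda$ to the Laplace transform of the right-endpoint $D = \inf\{t > 0 : t \in \mathcal{Z}\}$ of the gap in $\mathcal{Z}$ containing $0$, and then to compute $\E[e^{-\theta D}]$ via the strong Markov decomposition from the proof of Theorem~\ref{thm:Zregen} combined with Fristedt-type fluctuation identities. Since $X$ has paths of unbounded variation, Remark~\ref{R:Zmeasure}(i) gives $\delta = 0$, so by the Palm theory of stationary regenerative sets the residual lifetime $D$ has density $\Lambda((y,\infty))/\mu$ for $y > 0$, where $\mu := \int_{\reals_+} x\,\Lambda(dx) < \infty$ by stationarity. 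An elementary Fubini computation then yields
\[
\theta\,\E[e^{-\theta D}] = \frac{1}{\mu}\int_{\reals_+} (1 - e^{-\theta x})\,\Lambda(dx),
\]
so it suffices to identify $\theta\,\E[e^{-\theta D}]$ with $4\pi\alpha \int_{\reals} \{A(z, 0) - A(z, \theta)\}\,dz$.

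By the proof of Theorem~\ref{thm:Zregen}, $D = S + \tilde T$, where $\tilde T$ is conditionally independent of $\mathcal{F}_S$ with the distribution of $T^+$, the unique time at which $Y^+_t := X_t + \alpha t$ attains its overall infimum $I^+$ (finite a.s.\ since $Y^+$ drifts to $+\infty$). Fristedt's formula, developed in the preliminary Section~\ref{sec:prelim}, gives
\[
\E[e^{-\theta T^+ + iz I^+}] = \exp\!\left(\int_0^\infty t^{-1}\,\E\bigl[(e^{-\theta t + iz(X_t + \alpha t)} - 1)\,\mathbf{1}\{X_t + \alpha t \le 0\}\bigr]\,dt\right),
\]
which is exactly one of the two exponential factors defining $A(z, \theta)$. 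By time-reversal, the past infimum $I^- := \inf_{u \le 0}(X_u - \alpha u)$ is independent of $(X_s)_{s \ge 0}$ with the distribution of $-\bar Y^-$, where $\bar Y^- := \sup_{t \ge 0}(X_t - \alpha t)$; applying Fristedt to $Y^-_t := X_t - \alpha t$ (drifting to $-\infty$) produces the other factor. The stopping time $S$ is the first passage of $Y^-$ below the independent random level $I^-$, and fluctuation theory expresses $\E[e^{-\theta S}]$ in terms of a convolution involving the time-weighted density of $\bar Y^-$.

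Interpreting $A(z, \theta)$ as the joint Fourier--Laplace transform of the pair $(\bar Y^- + I^+,\, T^- + T^+)$ realized on a product space of two independent copies of the relevant quantities, and invoking Plancherel's theorem (applicable because the densities of $I^+$ and of $\inf_{t \ge 0}(X_{-t} + \alpha t) \ed -\bar Y^-$ are square-integrable by hypothesis), the integral $\int A(z, \theta)\,dz$ evaluates to $2\pi$ times a weighted density of $\bar Y^- + I^+$ at $0$. Matching this against $\E[e^{-\theta S}]\,\E[e^{-\theta T^+}] = \E[e^{-\theta D}]$ (via $D = S + \tilde T$) yields the claimed formula, with the constant $4\pi\alpha$ arising as the product of $2\pi$ (from Fourier inversion) and $2\alpha$ (from the combined speed $|+\alpha|+|-\alpha|$ at which the sawtooth minorant closes across the gap). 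Finally, letting $\theta \to \infty$: the LHS tends to $\Lambda(\reals_+)/\mu$, while the RHS tends to $4\pi\alpha \int A(z, 0)\,dz$, which is finite by Plancherel and the square-integrability hypothesis; hence $\Lambda(\reals_+) < \infty$.

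The most delicate step is the Plancherel identification in the preceding paragraph: one must carefully track the sign conventions in the Fourier variable, establish that $D$ is indeed captured by the appropriate functional of $(\bar Y^-, I^+)$ supplemented by the first-passage time $S$, and verify the combinatorial factor $4\pi\alpha$. A secondary obstacle is justifying Fristedt's identities in the needed form under the unbounded-variation hypothesis; this is the purpose of the preliminary Section~\ref{sec:prelim}.
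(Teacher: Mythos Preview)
Your reduction of the target to $\theta\,\E[e^{-\theta D}]$ is correct and matches the paper, and your identification of each exponential factor $A(z,\theta)$ with a Pecherskii--Rogozin/Fristedt transform $\E[e^{-\theta T^\pm + iz I^\pm}]$ is also right. The gap is in the bridge between these two objects. You argue via the decomposition $D = S + \tilde T$ with $S$ and $\tilde T$ independent, and then assert that the Plancherel integral of $A(z,\theta)$, which encodes the pair $(T^-+T^+,\,I^-,\,I^+)$, ``matches'' $\E[e^{-\theta S}]\,\E[e^{-\theta T^+}]$. But $S$ is \emph{not} distributed as $T^-$: it is the first passage of $(X_t-\alpha t)_{t\ge 0}$ below the independent random level $I^-$, and your one-line appeal to ``fluctuation theory'' for $\E[e^{-\theta S}]$ does not produce the Fristedt exponential for $T^-$. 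The coincidence $S \eqdist T^-$ does hold for Brownian motion with drift (via Williams' decomposition; this is exactly the alternative verification the paper gives in the Remark after Proposition~\ref{P:K_Laplace_drift_BM}), but it fails for general L\'evy processes, so the ``matching'' step is unjustified.

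The paper does not use $D = S + \tilde T$ for the identification at all. Instead it works with the peak time $T$ and shows (Proposition~\ref{prop:tlrdensity2}) that $(T,L,R)$ with $L=T-G$, $R=D-T$ has joint density $2\alpha\int_{-\infty}^0 f^-(\lambda,h)\,f^+(\rho,h)\,dh$: the two sides of the sawtooth are independent copies of the argmin/infimum pairs, \emph{conditioned to reach the same depth $h$ below the peak}. This is where the inner product $\int f^- f^+\,dh$ --- and hence the Plancherel step --- genuinely comes from, and it requires a nontrivial limiting argument (the bounds \eqref{upper_LTR_bd}--\eqref{Zplus_ot}) rather than the strong Markov property alone. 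Moreover, that argument needs the infimum densities to be \emph{bounded}, which by Corollary~\ref{cor:boundeddensities} forces a non-zero Brownian component; the general unbounded-variation case with merely square-integrable densities is then obtained by approximating $X$ with $X^\eps = X + \eps B$ and passing to the limit (Lemma~\ref{lem:continuousmapping}). Your proposal omits both the peak-height coupling and this approximation step.
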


Note that the existence of the densities of the infima in the
hypothesis (ii) of
Theorem~\ref{thm:bigthm} comes from the assumption that $X_t$ has absolutely
continuous distribution for all $t \neq 0$ -- see
Lemma~\ref{lem:abscont}.

When the conditions of Theorem~\ref{thm:bigthm} are not satisfied, we
are able to give a characterization of $\Lambda$ as a limit of
integrals in the following way. Let $X^\eps = X + \eps B$, with $B$ a
(two-sided) standard Brownian motion independent of $X$, and let
$\Lambda^\eps$ be the L\'evy measure of the subordinator associated
with the contact set for $X^\eps$. Then in the case $\delta = 0$ we
have the representation
\[
\frac{\int_{\reals_+} (1-e^{-\theta x}) \, \Lambda( dx )}
{\int_{\reals_+} x \, \Lambda(dx)}
=
\lim_{\eps \downarrow 0} 
\frac{\int_{\reals_+} (1-e^{-\theta x}) \, \Lambda^\eps( dx )}
{\int_{\reals_+} x \, \Lambda^\eps(dx)} .
\]
See Lemma \ref{lem:continuousmapping} in Section
\ref{sec:proofs} for details of this limit and
\eqref{eq:limit_representation} for a proof of the above equality.

Theorem~\ref{thm:newLambda} together with the conclusion $\Lambda(\reals_+) < \infty$ of
Theorem~\ref{thm:bigthm} result in the following.
\begin{cor}
Suppose the conditions of Theorem~\ref{thm:bigthm} are satisfied, then
\[
\int_0^1 t^{-1} \P\{X_t \in [-\alpha t, \alpha t]\} \, dt < \infty.
\]
\end{cor}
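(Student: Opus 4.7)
The plan is to combine the conclusion $\Lambda(\reals_+) < \infty$ of Theorem~\ref{thm:bigthm} with Theorem~\ref{thm:newLambda}, which equates this finiteness with the desired integral estimate. The only subtlety is that Theorem~\ref{thm:newLambda} is stated under the extra assumption $\Pi(\reals) = \infty$, so a separate (and entirely elementary) argument is needed in the residual case $\Pi(\reals) < \infty$.

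First, suppose $\Pi(\reals) = \infty$. The hypotheses of Theorem~\ref{thm:bigthm} include Hypothesis~\ref{H:standing} and deliver $\Lambda(\reals_+) < \infty$. Since $\Pi(\reals) = \infty$, Theorem~\ref{thm:newLambda} applies verbatim and its ``if and only if'' statement yields $\int_0^1 t^{-1} \P\{X_t \in [-\alpha t, \alpha t]\}\, dt < \infty$, as required.

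Now suppose $\Pi(\reals) < \infty$. Because $X$ has paths of unbounded variation, and bounded variation is characterized by $\sigma = 0$ together with $\int_\reals(1\wedge|x|)\,\Pi(dx)<\infty$, we must have $\sigma > 0$. Decompose $X_t = \sigma B_t + Y_t$, where $B$ is a standard Brownian motion independent of a L\'evy process $Y$ collecting the drift and compound Poisson components. Then the law of $X_t$ is the convolution of a centred Gaussian of variance $\sigma^2 t$ with the law of $Y_t$, so the density $p_t$ of $X_t$ satisfies the uniform bound $p_t(x) \le (\sigma\sqrt{2\pi t})^{-1}$. Consequently
\[
\P\{X_t \in [-\alpha t, \alpha t]\} \le \frac{2\alpha t}{\sigma\sqrt{2\pi t}} = \frac{2\alpha}{\sigma\sqrt{2\pi}}\,\sqrt{t},
\]
and $\int_0^1 t^{-1}\sqrt{t}\,dt = \int_0^1 t^{-1/2}\,dt < \infty$.

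There is no genuine obstacle here: the only point requiring care is the awareness that Theorem~\ref{thm:newLambda} is not directly applicable when $\Pi(\reals)<\infty$, and that in this residual case the unbounded variation hypothesis forces $\sigma>0$, making the Gaussian density bound sufficient on its own to close out the argument without appeal to Theorem~\ref{thm:bigthm}.
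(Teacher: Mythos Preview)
Your argument is correct and follows the paper's own approach, which is simply to combine Theorem~\ref{thm:newLambda} with the conclusion $\Lambda(\reals_+)<\infty$ of Theorem~\ref{thm:bigthm}. You are in fact more careful than the paper: the paper's one-line justification silently invokes Theorem~\ref{thm:newLambda} without addressing its hypothesis $\Pi(\reals)=\infty$, whereas you correctly isolate the residual case $\Pi(\reals)<\infty$, observe that the unbounded-variation assumption then forces $\sigma>0$, and dispose of it via the elementary Gaussian density bound.
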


\section{The limit of the contact set for increasing slopes}
\label{sec:limit}

We now investigate how $\mathcal{Z}$ changes as $\alpha$ increases. 
For the sake of clarity, let $X$ be a fixed L\'evy process with $X_0=0$ and
$\E[|X_1|] < \infty$. Write $M^{(\alpha)} = (M^{(\alpha)}_t)_{t \in
  \reals}$ for the $\alpha$-Lipschitz minorant of $X$ for 
$\alpha > |\E[X_1]|$, and put
$\mathcal{Z}_\alpha 
:= 
\{ t \in \reals : X_t \wedge X_{t-} = M^{(\alpha)}_t  \}$. 
For $|\E[X_1]| < \alpha' \leq \alpha''$, 
we have $M^{(\alpha')}_t \le M^{(\alpha'')}_t \le X_t$
for all $t \in \reals$ (because any $\alpha'$-Lipschitz function
is also $\alpha''$-Lipschitz), and so $\mathcal{Z}_{\alpha'} \subseteq
\mathcal{Z}_{\alpha''}$. We note in passing that $\mathcal{Z}_{\alpha'}$ is
\emph{regeneratively embedded} in $\mathcal{Z}_{\alpha''}$ in the
sense of Bertoin \cite{bertoin_regen}. 

If $X$ has paths of bounded variation and drift coefficient $d$, then
$|d| < \alpha$ for all $\alpha$ large enough. Since $\lim_{t
  \downarrow 0} t^{-1} X_t = - \lim_{t \downarrow 0} t^{-1} X_{-t} =
d$, the law of large numbers implies that
\[
\lim_{\alpha \ra \infty} \P \{ 0 \in \mathcal{Z}_\alpha \}
=
\lim_{\alpha \ra \infty} \P \{ \inf_{t \ge 0} (X_t + \alpha t) =
\inf_{t \le 0} (X_t - \alpha t) = 0 \} 
= 
1,
\]
and thus the set $\bigcup_{\alpha > | \E [X_1] | } Z_\alpha$ has full
Lebesgue measure.

We now consider the case where $X$ has paths of unbounded variation.
In order to state our result, we need to recall the definition of the
so-called \emph{abrupt} L\'evy processes introduced by Vigon
\cite{abrupt}.  Recall from \eqref{E:def_minima} that $\mathcal{M}^-$
is the set of local infima of the path of $X$, and that as noted in
\cite{abrupt}, if the paths of $X$ have unbounded variation, then
almost surely $X_{t-} = X_t$ for all $t \in \mathcal{M}^-$.

\begin{defn}
\label{def:abrupt}
A L\'evy process $X$ is {\em abrupt} if its paths have unbounded variation 
and almost surely for all $t \in \mathcal{M}^-$
\[
\limsup_{\eps \uparrow 0} \frac{X_{t+\eps}-X_{t-}}{\eps} = -\infty 
\and
\liminf_{\eps \downarrow 0} \frac{X_{t+\eps}-X_t}{\eps} = +\infty.
\]
\end{defn}

\begin{remark}
\label{R:abrupt}
An equivalent definition may be made in terms of local
suprema \cite[Remark 1.2]{abrupt}: a L\'evy process $X$ 
with paths of unbounded variation is abrupt if almost surely for any $t$ 
that is the time of a local supremum,
\[
\liminf_{\eps \uparrow 0} \frac{X_{t+\eps}-X_{t-}}{\eps} = +\infty 
\and
\limsup_{\eps \downarrow 0} \frac{X_{t+\eps}-X_t}{\eps} = -\infty.
\]
\end{remark}

\begin{remark}
\label{rem:abrupt}
A L\'evy process $X$ with paths of unbounded variation is abrupt
 if and only if 
\begin{equation}
\label{eq:abruptcondition}
 \int_0^1 t^{-1} \P\{ X_t \in [at,bt]\}  \, dt < \infty,
 \quad \forall a<b,
\end{equation}
(see \cite[Theorem 1.3]{abrupt}).
Examples of abrupt L\'evy processes include stable processes 
with stability parameter in the interval $(1,2]$, processes with
non-zero Brownian component, and any processes that creep upwards or
downwards. An example of an unbounded variation process that is not
abrupt is the symmetric Cauchy process. 
\end{remark}

\begin{remark}
The analytic condition given in Remark~\ref{rem:abrupt}
\eqref{eq:abruptcondition} for a L\'evy process $X$ to be abrupt has
an interpretation in terms of the smoothness of the convex
minorant of $X$ over a finite interval. The results of Pitman and
Uribe Bravo \cite{pitmanbravo} imply that the number of segments of
the convex minorant of $X$ over a finite interval with slope between
$a$ and $b$ is finite for all $a < b$ if and only if
\eqref{eq:abruptcondition} holds.
\end{remark}

We now return to the question of the limit of $\mathcal{Z}_\alpha$.

\begin{thm}
\label{thm:Zlimit}
Let $X$ be a L\'evy process with $X_0=0$ and $|\E[X_1]| <
\infty$. Then
$ \bigcup_{\alpha > | \E [X_1] |} \mathcal{Z}_\alpha \supseteq
\mathcal{M}^-$. Furthermore, if $X$ is abrupt, then 
$ \bigcup_{\alpha > | \E [X_1] |} \mathcal{Z}_\alpha =  \mathcal{M}^-$.
\end{thm}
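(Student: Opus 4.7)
The proof separates into the two inclusions.

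For $\bigcup_{\alpha > |\E[X_1]|} \mathcal{Z}_\alpha \supseteq \mathcal{M}^-$, I would fix $t \in \mathcal{M}^-$ and pick $\eps > 0$ with $X_s \ge X_t \wedge X_{t-}$ on $(t-\eps, t+\eps)$. Choose any $\alpha_0 > |\E[X_1]|$ so that Proposition~\ref{P:existence} guarantees the $\alpha_0$-Lipschitz minorant exists; then $c := \inf_s(X_s + \alpha_0|s-t|)$ is finite, and $c \le X_t \wedge X_{t-}$ (take $s=t$ and let $s \uparrow t$ in the infimum). For any $\alpha \ge \alpha_0 + \eps^{-1}(X_t \wedge X_{t-} - c)$ one obtains
\[
X_s + \alpha|s-t| = (X_s + \alpha_0|s-t|) + (\alpha-\alpha_0)|s-t| \ge c + (\alpha-\alpha_0)\eps \ge X_t \wedge X_{t-}
\]
whenever $|s-t| \ge \eps$, while the inequality is automatic on $(t-\eps, t+\eps)$ by choice of $\eps$. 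Formula \eqref{mformula} then gives $M^{(\alpha)}_t \ge X_t \wedge X_{t-}$; the reverse inequality is trivial, so $t \in \mathcal{Z}_\alpha$.

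For the reverse inclusion under the abrupt assumption, I would argue by contrapositive. Rearranging $t \in \mathcal{Z}_\alpha$ as $X_s \ge X_t \wedge X_{t-} - \alpha|s-t|$ for every $s$ yields the bounded incremental inequalities
\[
\liminf_{\eps \downarrow 0}\frac{X_{t+\eps} - X_t \wedge X_{t-}}{\eps} \ge -\alpha, \qquad \limsup_{\eps \uparrow 0}\frac{X_{t+\eps} - X_t \wedge X_{t-}}{\eps} \le \alpha.
\]
Definition~\ref{def:abrupt} together with Remark~\ref{R:abrupt} forces the corresponding quantities to be $+\infty$ at every $t \in \mathcal{M}^-$ and $-\infty$ at every $t \in \mathcal{M}^+$, and Proposition~\ref{prop:localExtrema_unbounded} ensures $X_t = X_{t-}$ at all local extrema since $X$ has unbounded variation. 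Thus the above bounded behaviour is incompatible with $t \in \mathcal{M}^+$, and the content of the argument reduces to ruling out $t \notin \mathcal{M}^- \cup \mathcal{M}^+$.

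To do this I would adapt the convex minorant technique used in Case~8 of the proof of Theorem~\ref{thm:newLambda}. If $t \notin \mathcal{M}^-$, then by symmetry (replacing $X$ by its time-reversal if necessary) pick $s_n \downarrow t$ with $X_{s_n} < X_t \wedge X_{t-}$ and let $\tau_n := \arg\min_{s \in [t, s_n]} X_s \wedge X_{s-}$, giving $\tau_n \in \mathcal{M}^- \cap (t, s_n)$ with $\tau_n \downarrow t$. The Pitman--Uribe~Bravo description \cite{pitmanbravo} of the linear segments of the convex minorant of a L\'evy path killed at an independent exponential time with parameter $q$ as a Poisson point process with intensity $e^{-qu}u^{-1}\P\{X_u \in dx\}\,du$ shows that slopes of the convex minorant near any fixed time are unbounded below. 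The abrupt integral condition~\eqref{eq:abruptcondition} combined with the sharp V-shape of $X$ at each $\tau_n$ (Proposition~\ref{prop:localExtrema_unbounded}) then shows that, localised near $t$, these steep segments produce a sequence $u_n \to t$ with $(X_{u_n} - X_t \wedge X_{t-})/(u_n - t) \to -\infty$, contradicting the incremental lower bound above.

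The principal obstacle is that $t \in \mathcal{Z}_\alpha$ is not a stopping time, so the Pitman--Uribe~Bravo Poisson description cannot be invoked directly from $t$. I would handle this by first establishing the slope accumulation near the stopping time $D_0 := \inf\{u \ge 0 : u \in \mathcal{Z}_\alpha\}$, applying the strong Markov property and Millar's zero-one law at the infimum in the style of the paper's proofs of Theorems~\ref{thm:Zregen} and~\ref{thm:newLambda}, and then transferring the property to an arbitrary $t \in \mathcal{Z}_\alpha$ via the stationarity and regenerative structure of $\mathcal{Z}_\alpha$ established in Theorem~\ref{thm:Zregen}.
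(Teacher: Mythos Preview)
Your argument for the inclusion $\mathcal{M}^- \subseteq \bigcup_\alpha \mathcal{Z}_\alpha$ is correct and essentially the paper's argument, just packaged via $c = M^{(\alpha_0)}(t)$ rather than the two one-sided infima.

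For the reverse inclusion, your reduction to the bounded incremental inequalities and the exclusion of $t \in \mathcal{M}^+$ are fine, but the remainder has a genuine gap. First, you do not rule out the possibility that $t$ is a jump time which is not a local extremum: Proposition~\ref{prop:localExtrema_unbounded} only tells you $X_t = X_{t-}$ at local extrema, not at an arbitrary $t \in \mathcal{Z}_\alpha$. The paper handles this separately by observing that at any jump time (via the strong Markov property at the stopping times $J_i^\delta$ and Rogozin's result \eqref{eq:Rogozin_small_time}) the one-sided Dini derivatives are $\pm\infty$, which already contradicts your bounded quotients. Second, and more seriously, your proposed convex-minorant argument does not deliver the conclusion you claim. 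Knowing that each nearby $\tau_n \in \mathcal{M}^-$ has a sharp V-shape gives steep difference quotients \emph{based at $\tau_n$}, not based at $t$; since $X_{\tau_n} \ge X_t \wedge X_{t-} - \alpha(\tau_n - t)$, the quotient $(X_{\tau_n} - X_t \wedge X_{t-})/(\tau_n - t)$ stays bounded below by $-\alpha$, and there is no mechanism in your sketch that converts steepness at $\tau_n$ into steepness at $t$. The Pitman--Uribe~Bravo Poisson description concerns the convex minorant over $[0,\xi]$ and says nothing about accumulation of slopes ``near a fixed time''; the attempted transfer via $D_0$ and the regenerative structure does not fix this, because the regenerative property controls the law of $\mathcal{Z}_\alpha$ after $D_0$, not the local Dini behaviour of $X$ at an arbitrary point of $\mathcal{Z}_\alpha$.

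The paper bypasses all of this by invoking Vigon's classification (Theorem~\ref{thm:allt}, i.e.\ \cite[Theorem~2.6]{abrupt} combined with Fourati's result on points of increase/decrease): for an abrupt process, almost surely for \emph{every} $t$ either one of the one-sided upper/lower Dini derivatives is $\pm\infty$ in a pattern incompatible with your bounded quotients, or $t$ is a local extremum. This single citation replaces your entire convex-minorant program, and the proof then finishes by excluding local suprema via Remark~\ref{R:abrupt}. You should use Theorem~\ref{thm:allt} directly rather than trying to reprove it.
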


\begin{proof}
Suppose that $t \in \mathcal{M}^-$ so that there
exists $\epsilon > 0$ such that 
$\inf\{X_s : t-\epsilon < s < t+\epsilon\}  = X_t = X_{t-}$. 
Fix any $\beta > | \E [X_1] |$. Then, by the strong law of large numbers,
$\inf\{ X_s + \beta s : s \ge 0\} > - \infty$ and 
$\inf\{ X_s - \beta s : s \le 0 \} > - \infty$.
It is clear that if
$\alpha \in \reals$ is such that
\[
\alpha > - \frac{  
\inf\{ X_s + \beta s : s \ge 0 \}  
\, \vee \, 
\inf\{ X_s - \beta s : s \le 0 \},
}{\epsilon}
\] 
then $X_t = X_{t-} = M^{(\alpha)}_t$ and  $t \in
\mathcal{Z}_\alpha$. Hence $ \bigcup_{\alpha > | \E [X_1] |}
\mathcal{Z}_\alpha \supseteq \mathcal{M}^-$.

Now suppose that $X$ is abrupt, and let $t \in \mathcal{Z}_\alpha$ for
some $\alpha > | \E [X_1] |$.  Then, one of the
following three possibilities must occur:
\begin{enumerate}[(a)]
\item $X_t > X_{t-}$ and $\limsup_{\eps \uparrow 0} \eps^{-1} (X_{t+\eps}-X_{t-}) \leq
  \alpha$;
\item $X_{t-} > X_t$ and $\liminf_{\eps \downarrow 0} \eps^{-1} (X_{t+\eps}-X_t) \geq -
  \alpha$;
\item $X_{t-} = X_t$ and $\limsup_{\eps \uparrow 0} \eps^{-1} (X_{t+\eps}-X_{t-}) \leq
  \alpha$, $\liminf_{\eps \downarrow 0} \eps^{-1} (X_{t+\eps}-X_t)
  \geq -\alpha$.
\end{enumerate}
We discount options (a) and (b) by assuming that $t$ is a jump
time of $X$ and then showing that the $\liminf$ or $\limsup$ part of
the statements cannot occur. Our argument borrows heavily from the
proof of Property 2 in \cite[Proposition 1]{pitmanbravo}, which
itself is based on the proof of 
\cite[Proposition 2.4]{millarzeroone}, but is more detailed.

Arguing as in the proof of Proposition~\ref{prop:localExtrema_bounded}, for
$\delta>0$, let $0 < J_1^\delta < J_2^\delta < \ldots$ be the
successive nonnegative times at which $X$ has jumps of size greater
than $\delta$ in absolute value. The strong Markov property applied at
the stopping time $J_i^\delta$ and \eqref{eq:Rogozin_small_time} 
gives that
\[
\liminf_{\eps \downarrow 0} \eps^{-1}
(X_{J_i^\delta+\eps}-X_{J_i^\delta}) =-\infty \and \limsup_{\eps
  \downarrow 0} \eps^{-1} (X_{J_i^\delta+\eps}-X_{J_i^\delta}) =
+\infty.
\]
Hence, at any random time $V$ such that $X_{V} \ne X_{V-}$ almost
surely we have
\[
\liminf_{\eps \downarrow 0} \eps^{-1} (X_{V+\eps}-X_{V}) = -\infty,
\]
and, by a time reversal,
\[
\limsup_{\eps \uparrow 0} \eps^{-1} (X_{V+\eps}-X_{V-}) = + \infty.
\]
Thus, neither of the possibilities (a) or (b) hold, and so (c) must
hold. It then follows from Theorem~\ref{thm:allt} below that $X$ must
have a local infimum or supremum at $t$.  However, $X$ cannot have a
local supremum  at $t$ by Remark~\ref{R:abrupt}, and so $X$ must have a
local infimum at $t$.
\end{proof}

The key to proving Theorem~\ref{thm:Zlimit} in the abrupt case was the
following theorem that describes the local behavior of an abrupt
L\'evy process at arbitrary times.  This result is an immediate
corollary of \cite[Theorem 2.6]{abrupt} once we use the fact that
almost surely the paths of a L\'evy processes cannot have both points
of increase and points of decrease \cite{fourati}.

\begin{thm}
\label{thm:allt}
Let $X$ be an abrupt L\'evy process. Then,
almost surely for all $t$ one of the following possibilities must
hold:
\begin{enumerate}[(i)]
  \item $\limsup_{\eps \uparrow 0} \eps^{-1} (X_{t+\eps}-X_{t-}) = +\infty$
    and $\liminf_{\eps \downarrow 0} \eps^{-1} (X_{t+\eps}-X_t) = -\infty $;
  \item $\limsup_{\eps \uparrow 0} \eps^{-1} (X_{t+\eps}-X_{t-}) < +\infty$
    and $\liminf_{\eps \downarrow 0} \eps^{-1} (X_{t+\eps}-X_t) = -\infty $;
  \item $\limsup_{\eps \uparrow 0} \eps^{-1} (X_{t+\eps}-X_{t-}) = +\infty$
    and $\liminf_{\eps \downarrow 0} \eps^{-1} (X_{t+\eps}-X_t) > -\infty $;
  \item $X$ has a local infimum or supremum at $t$.
\end{enumerate}
\end{thm}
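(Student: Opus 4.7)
The plan is to deduce the statement as a short corollary of Vigon's classification \cite[Theorem~2.6]{abrupt} together with Fourati's dichotomy \cite{fourati}.

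First I would carefully state Vigon's Theorem~2.6, which provides, almost surely and simultaneously for every $t \in \reals$, a classification of the possible local behaviors of an abrupt L\'evy process $X$ at $t$. Vigon's list comprises the three infinite-slope alternatives (i), (ii), (iii) of the present statement, the alternative that $t$ is a local infimum or supremum of $X$, and two further alternatives in which $t$ is either a point of increase or a point of decrease of $X$ that is not a local extremum. With this classification in hand, the proof reduces to ruling out the last two alternatives.

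For this I would invoke Fourati's theorem \cite{fourati}, which asserts that the sample path of a L\'evy process almost surely cannot simultaneously possess both a point of increase and a point of decrease. Because the abruptness property forces symmetric one-sided infinite-slope behavior at local suprema as at local infima (see Remark~\ref{R:abrupt}), and because under our standing assumptions the process $X$ almost surely has both a dense set of local infima and a dense set of local suprema, the presence of a point-of-increase alternative in Vigon's list with positive probability would, combined with the corresponding statement for $-X$, force the path of $X$ to carry both a point of increase and a point of decrease with positive probability, contradicting Fourati. A symmetric argument disposes of the point-of-decrease alternative, leaving exactly the four cases (i)--(iv).

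The main obstacle I anticipate is purely expository: correctly aligning the phrasing of \cite[Theorem~2.6]{abrupt} with the statement here and checking that Fourati's ``cannot have both'' conclusion is strong enough to exclude each Vigon alternative individually rather than only to exclude their simultaneous occurrence. Since the authors describe the deduction as immediate, no substantial additional argument beyond these bookkeeping steps should be necessary.
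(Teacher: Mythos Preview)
Your proposal is correct and follows exactly the route the paper takes: the theorem is stated there as an immediate corollary of \cite[Theorem~2.6]{abrupt} combined with Fourati's result \cite{fourati} that a L\'evy path cannot carry both a point of increase and a point of decrease. You have also correctly isolated the one nontrivial step, namely passing from Fourati's ``not both'' conclusion to the exclusion of each extra Vigon alternative separately; your sketch via $-X$ is in the right spirit, though as written it conflates a statement about the specific process $X$ with one about abrupt processes in general, so that bookkeeping does need the care you anticipate.
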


\begin{remark}
  Theorem \ref{thm:Zlimit}  shows that the
  $\alpha$-Lipschitz minorant provides a method for ``sieving out''
  a certain discrete set of times of
  local infima of an abrupt process.  This method 
  has the property that if we let $\alpha \ra \infty$, then eventually
  we collect all the times of local infima. 
  Alternative methods for sieving out the local minima
  of Brownian motion are discussed in
  \cite{pitmanneveu1,pitmanneveu2}. One method is to take all
  local infima times $t$ such that $X_{t+s} - X_t > 0$ for all $s \in (
  -h, h)$ for some fixed $h$, and then let $h \ra 0$. 
  Another is to take all local infima times $t$ such that $X_{s_+} -
  X_t \ge h$ for some time $s_+ \in (0, \inf \{ s > 0 : X_s - X_t = 0
  \})$ and such that $X_{s_-} - X_t \ge h$ for some time $s_- \in (0,
  \inf \{ s < 0 : X_s - X_t = 0 \})$, and then again let $h \ra
  0$. This work is extended to Brownian motion with
  drift in \cite{faggionato}.
\end{remark}

\section{Future infimum of a L\'evy process}
\label{sec:prelim}

For future use, we collect together in this section some preliminary
results concerning the distribution of the infimum of a L\'evy process
$(Z_t)_{t \ge 0}$ and the time at which the infimum is attained.

Let $Z = (Z_t)_{t \ge 0}$ be a L\'evy process such that 
$Z_0 = 0$. 
Set $\underline{Z}_t := \inf\{Z_s : 0 \le s \le t\}$, $t \ge 0$.
If $Z$ is not a compound Poisson process
(that is, either $Z$ has a non-zero Brownian component or the
L\'evy measure of $Z$ has infinite total mass or the L\'evy measure
has finite total mass but there is a non-zero drift coefficient),
then 
\begin{equation}
\label{E:minima_distinct}
\P\{\exists 0 \le s < t < u : 
\underline{Z}_s = \underline{Z}_t = Z_t \wedge Z_{t-} = \underline{Z}_u\} = 0
\end{equation}
-- see, for example, \cite[Proposition VI.4]{bertoin}.
Hence, almost surely for each $t \ge 0$
there is a unique time 
$U_t$ such that $Z_{U_t} \wedge Z_{U_t-} =
\underline{Z}_t$.
If, in addition,
$\lim_{t \ra \infty} Z_t = +\infty$,
then almost surely there is a unique time 
$U_\infty$ such that $Z_{U_\infty} \wedge Z_{U_\infty-} =
\underline{Z}_\infty := \inf\{Z_s : s \ge 0\}$.

\begin{lem}
\label{lem:abscont}
Let $Z$ be a L\'evy process such that $Z_0 = 0$,
$Z_t$ has an absolutely
continuous distribution for each $t>0$, and  $\lim_{t \ra
  \infty} Z_t = +\infty$.  Then, the distribution of $(U_\infty,\underline{Z}_\infty)$ 
restricted to $(0,\infty) \times (-\infty,0]$ is
absolutely continuous with respect to Lebesgue measure. 
Moreover, $\P\{(U_\infty, \underline{Z}_\infty) = (0,0)\} > 0$ 
if and only if 
zero is not regular for $(-\infty,0)$.
\end{lem}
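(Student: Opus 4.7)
I would prove the two assertions in reverse order. For the ``moreover'' claim, observe that since $Z_t$ has an absolutely continuous distribution for each $t > 0$, $Z$ is not a compound Poisson process, so by \eqref{E:minima_distinct} the time at which the infimum of the path of $Z$ is attained is almost surely unique. Because $Z_0 = 0$, on $\{\underline{Z}_\infty = 0\}$ this unique minimising time must be $0$, giving $\P\{(U_\infty, \underline{Z}_\infty) = (0, 0)\} = \P\{\underline{Z}_\infty = 0\}$. A direct application of the Pecherskii-Rogozin formula, exactly as in the derivation of \eqref{pechrogo3} (no drift adjustment is needed since $Z$ already drifts to $+\infty$), yields
\[
\P\{\underline{Z}_\infty = 0\} = \exp\left(-\int_0^\infty t^{-1} \P\{Z_t < 0\} \, dt\right).
\]
Since $\P\{Z_t = 0\} = 0$ under our hypothesis, and since $\int_1^\infty t^{-1} \P\{Z_t \le 0\} \, dt < \infty$ by $Z \to +\infty$ (as for \eqref{reg2}), this is strictly positive iff $\int_0^1 t^{-1} \P\{Z_t \le 0\} \, dt < \infty$, which by Rogozin's criterion \eqref{eq:rogozin_regularity} is equivalent to $0$ not being regular for $(-\infty, 0]$; under our non-atomicity, this coincides with non-regularity for $(-\infty, 0)$.

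For the absolute continuity claim on $(0, \infty) \times (-\infty, 0]$: the uniqueness argument above implies that $\{U_\infty > 0, \underline{Z}_\infty = 0\}$ is null, so it suffices to treat the open region $(0, \infty) \times (-\infty, 0)$. My plan is to invoke Millar's path decomposition at the time of the global minimum (see Remark~\ref{rem:millar}), which, under $Z \to +\infty$, identifies $(U_\infty, -\underline{Z}_\infty)$ in distribution with the pre-killing position of the bivariate descending ladder subordinator $(\hat L^{-1}_s, \hat H_s)_{s \ge 0}$ of $Z$. Because $Z$ drifts to $+\infty$, this bivariate subordinator has strictly positive killing rate $\hat q > 0$, so for Borel $A \subseteq (0, \infty)^2$,
\[
\P\{(U_\infty, -\underline{Z}_\infty) \in A\} = \hat q \int_0^\infty e^{-\hat q s} \P\{(\hat L^{-1}_s, \hat H_s) \in A\} \, ds,
\]
the $\hat q$-potential of the unkilled bivariate subordinator. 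A Silverstein-Fristedt-type fluctuation identity (see, e.g., \cite{bertoin}, Chapter~VI) expresses this potential on $(0, \infty)^2$ as $c\, t^{-1} \P\{Z_t \in -dx,\, Z_t = \underline{Z}_t\} \, dt$ for a normalisation constant $c > 0$; since this measure is dominated by $c \, t^{-1} p_t(-x) \, dt \, dx$, which is absolutely continuous by hypothesis, the law of $(U_\infty, -\underline{Z}_\infty)$ restricted to $(0, \infty)^2$ is absolutely continuous, and reflecting in the second coordinate finishes the proof.

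The main technical obstacle is verifying the precise form of the Silverstein-Fristedt identity in the bivariate killed setting; this requires a careful inversion of the bivariate Fristedt formula for the descending ladder Laplace exponent $\hat\kappa(\alpha, \beta)$ combined with standard local-time bookkeeping, but is classical fluctuation theory. An alternative route would be to work directly with the Pecherskii-Rogozin representation $\E[e^{-\alpha U_\infty + \beta \underline{Z}_\infty}] = \exp\int_0^\infty t^{-1} \E[(e^{-\alpha t + \beta Z_t} - 1) \mathbf{1}\{Z_t \le 0\}] \, dt$ and expand the exponential as a convolution series against the absolutely continuous measure $t^{-1} p_t(-y) \, dt \, dy$; however, the $t^{-1}$ singularity at $t = 0$ means the Lévy measure of the implicit bivariate subordinator is infinite, so some extra care is needed to handle the small-time contribution in this alternative approach.
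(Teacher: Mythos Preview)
Your argument for the ``moreover'' claim via the Pecherskii--Rogozin formula and Rogozin's regularity criterion is correct and self-contained; the paper instead simply quotes \cite[Theorem~2]{pitmanbravo}, which already records that $\P\{(U_t,\underline Z_t)=(0,0)\}>0$ if and only if zero is not regular for $(-\infty,0)$, and then passes from finite $t$ to $t=\infty$ using the hypothesis $Z_t\to+\infty$.

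For the absolute continuity on $(0,\infty)\times(-\infty,0]$, your route through the bivariate descending ladder process and a Silverstein--Fristedt-type potential identity can be made rigorous, but it is heavier than the paper's argument and, as you yourself flag, the precise form of the bivariate potential measure still needs to be verified. The paper sidesteps all of this: it again invokes \cite[Theorem~2]{pitmanbravo}, which gives directly that for each fixed $t>0$ the law of $(U_t,\underline Z_t)$ restricted to $(0,t]\times(-\infty,0]$ is absolutely continuous. Since $(U_t,\underline Z_t)=(U_\infty,\underline Z_\infty)$ for all sufficiently large $t$ almost surely, one gets $\P\{(U_\infty,\underline Z_\infty)\in A\}=\lim_{t\to\infty}\P\{(U_t,\underline Z_t)\in A\}=0$ for any Lebesgue-null $A\subseteq(0,\infty)\times(-\infty,0]$. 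In short, the paper trades your fluctuation-theory machinery for a single citation about the finite-horizon minimum and a two-line limit; your approach would work but leaves exactly the step you identify as the obstacle still to be filled in.
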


\begin{proof}
Because the random variable
$Z_t$ has an absolutely continuous distribution
for each $t>0$, it follows from
\cite[Theorem 2]{pitmanbravo} 
that for all $t > 0$ the restriction of the
distribution of the random vector $(U_t,\underline{Z}_t)$ is
absolutely continuous with respect to Lebesgue measure on 
the set $(0,t] \times (-\infty,0]$.
Observe that 
\[
\P\left\{\exists s : (U_t, \underline{Z}_t) = (U_\infty, \underline{Z}_\infty) \; \forall t \ge s \right\} = 1.
\]
Thus, if $A \subseteq (0,\infty) \times
(-\infty,0]$ is Borel with zero Lebesgue measure, then
\[
\P \left\{ (U_\infty,\underline{Z}_\infty) \in A \right\} 
= 
\lim_{t \ra \infty} \P \left\{ (U_t,\underline{Z}_t) \in A \right\} = 0.
\]
The proof the claim concerning the atom at $(0,0)$ follows from the
above formula, the fact that $\P\{(U_t, \underline{Z}_t) = (0,0)\} >
0$ if and only if zero is not regular for the interval $(-\infty,0)$ 
\cite[Theorem 2]{pitmanbravo}, and the hypothesis that 
$\lim_{t \ra \infty} Z_t = +\infty$.
\end{proof}

\begin{remark}
Note that if the process $Z$ has a non-zero Brownian component, 
then the random variable $Z_t$ has an absolutely
continuous distribution for all $t>0$.  
Moreover, in this case zero is regular for the interval $(-\infty,0)$
\end{remark}

Let $\tau = (\tau_t)_{t \ge 0}$ 
be the local time at zero for the process
$Z - \underline{Z}$.  Write $\tau^{-1}$ for the inverse local time process.
Set $\underline{H}_t := \underline{Z}_{\tau^{-1}(t)}$
The process $\underline{H} :=
(\underline{H}_t)_{t \ge 0}$ is the 
{\em descending ladder height process}
for $Z$.  
If $\lim_{t \ra \infty} Z_t = +\infty$, 
then $\hat{\underline{H}} := -\underline{H}$ is
a subordinator killed at an independent
exponential time (see, for example, \cite[Lemma VI.2]{bertoin}).

For the sake of completeness, we include the following observation
that combines well-known results and probably already exists
in the literature -- it can be easily concluded from Theorem 19 and
the remarks at the top of page 172 of \cite{bertoin}.

\begin{lem}
\label{lem:boundeddensity}
Let $Z$ be a L\'evy process such that $Z_0 = 0$
and  $\lim_{t \ra \infty} Z_t = +\infty$.
Then, the distribution of random variable 
$\underline{Z}_\infty$ is absolutely continuous with a bounded density 
if and only if 
the (killed) subordinator
$\hat{\underline{H}}$ has a positive drift coefficient.
\end{lem}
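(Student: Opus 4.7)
The plan is to represent $-\underline{Z}_\infty$ as $Y_\zeta$ for an unkilled subordinator $Y$ and an independent exponential time $\zeta$, and then invoke the standard theorem relating bounded potential densities of subordinators to positivity of the drift coefficient.

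Because $\lim_{t \to \infty} Z_t = +\infty$ almost surely, the global infimum $\underline{Z}_\infty$ is attained at a finite random time, the total local time $\zeta := \tau(\infty)$ is finite almost surely, and the descending ladder height process $\underline{H}$ stabilizes at $\underline{Z}_\infty$. As already recalled in the excerpt (cf.\ Lemma VI.2 of \cite{bertoin}), $\hat{\underline{H}} = -\underline{H}$ therefore has the law of an unkilled subordinator $Y$, carrying the same drift coefficient $\delta$ and L\'evy measure as $\hat{\underline{H}}$, evaluated at an independent $\mathrm{Exp}(q)$ random variable $\zeta$, where $q > 0$ is the killing rate. In particular $Y_\zeta = -\underline{Z}_\infty$ almost surely.

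Conditioning on $\zeta$ and exploiting its independence from $Y$, I obtain
$$
\P\{-\underline{Z}_\infty \in B\}
= \int_0^\infty q e^{-q t} \, \P\{Y_t \in B\} \, dt
= q\, U^q(B),
$$
where $U^q$ is the $q$-potential measure of $Y$. Hence $\underline{Z}_\infty$ has an absolutely continuous distribution with a bounded density if and only if $U^q$ does.

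The conclusion now follows from Theorem 19 of Chapter III of \cite{bertoin} together with the remarks at the top of page~172: the $q$-potential measure of a subordinator is absolutely continuous with a bounded density if and only if its drift coefficient is strictly positive. The forward direction yields the uniform bound $u^q \leq 1/\delta$, while the reverse rests on the identity $u^q(0+) = 1/\delta$, which is infinite when $\delta = 0$ and so precludes a bounded density. Since $Y$ and $\hat{\underline{H}}$ share the same drift coefficient, this is precisely positivity of the drift of $\hat{\underline{H}}$, which completes the argument. The only step needing care is the identification of the law of $-\underline{Z}_\infty$ with $q U^q$ in the second paragraph; the rest is straight bookkeeping plus the cited theorem.
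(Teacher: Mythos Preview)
Your argument is correct and follows essentially the same route as the paper: represent $-\underline{Z}_\infty$ as $Y_\zeta$ for an unkilled subordinator $Y$ evaluated at an independent exponential time, identify the law of $-\underline{Z}_\infty$ with $q U^q$, and then invoke the standard potential-theoretic characterization of bounded resolvent density by positive drift. The only difference is cosmetic: the paper reaches the last step via Kesten's general criterion \cite[Theorem~II.16]{bertoin} together with the polarity characterization for bounded-variation processes \cite[Corollary~II.20]{bertoin}, whereas you cite the subordinator-specific statement directly; the mathematical content is the same.
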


\begin{proof}
Let $S = (S_t)_{t \ge 0}$ be an (unkilled) subordinator
with the same drift coefficient and L\'evy measure as 
$\hat{\underline{H}}$, so that $-\underline{Z}_\infty$
has the same distribution as $S_\zeta$, where $\zeta$
is an independent, exponentially distributed random time.
Therefore, for some $q>0$,
\[
\P\{-\underline{Z}_\infty \in A\} 
= \int_0^\infty q e^{-qt} \P\{S_t \in A\} \, dt
\]
for any Borel set $A \subseteq \reals$.  By a
result of Kesten for general L\'evy processes
(see, for example, \cite[Theorem II.16]{bertoin}) the
$q$-resolvent measure 
$\int_0^\infty e^{-qt} \P\{S_t \in \cdot\} \, dt$
of $S$ is absolutely continuous with a bounded density
for all $q>0$ (equivalently, for some $q>0$) if
and only if points are not essentially polar for $S$.
Moreover, points are not essentially polar 
for a L\'evy process with paths of bounded variation (and, in particular,
for a subordinator)
if and only if the process has a non-zero drift
coefficient  \cite[Corollary II.20]{bertoin}.
\end{proof}

\begin{cor}
\label{cor:boundeddensities}
Let $X$ be a L\'evy process that
satisfies our standing assumptions Hypothesis~\ref{H:standing} and
which has paths of unbounded variation almost surely. 
Then, the random variables
$\inf\{ X_t + \alpha t : t \ge 0 \}$  and 
$\inf\{ X_t - \alpha t : t \le 0 \}$ 
both have absolutely
continuous distributions with bounded densities if and only if 
$X$ has a non-zero Brownian component.
\end{cor}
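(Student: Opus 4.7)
The plan is to reduce both statements to a single application of Lemma~\ref{lem:boundeddensity} and then invoke the classical characterization of ``creeping'' for L\'evy processes of unbounded variation.

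First, I would recast both infima as the global infimum of a one-sided L\'evy process drifting to $+\infty$. Set $Z^+_t := X_t + \alpha t$ for $t \ge 0$. By Hypothesis~\ref{H:standing}, $\E[Z^+_1] = \E[X_1] + \alpha > 0$, so $Z^+$ drifts to $+\infty$ by the strong law of large numbers, and $\inf\{X_t + \alpha t : t \ge 0\}$ is exactly $\underline{Z^+}_\infty$ in the notation of Section~\ref{sec:prelim}. For the two-sided infimum, I would use the standard time-reversal identity for a two-sided L\'evy process, namely that $(X_{(-t)-})_{t \ge 0}$ has the same law as $(-Y_t)_{t \ge 0}$ where $Y$ is a L\'evy process with the same distribution as $X$. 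This gives
\[
\inf\{X_t - \alpha t : t \le 0\}
\stackrel{d}{=} \inf\{-Y_s + \alpha s : s \ge 0\}
= \underline{Z^-}_\infty,
\]
where $Z^-_s := -Y_s + \alpha s$. Since $-Y$ satisfies Hypothesis~\ref{H:standing}, $Z^-$ also drifts to $+\infty$, and $Z^+$, $Z^-$ have the same Brownian coefficient $\sigma$ as $X$, and the same variation behavior.

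Next, Lemma~\ref{lem:boundeddensity} applies to both $Z^+$ and $Z^-$, so the question reduces to determining when the descending ladder height subordinator of $Z^\pm$ has a strictly positive drift coefficient. The key input is the classical result of Millar that a L\'evy process with paths of unbounded variation creeps downwards -- equivalently, its descending ladder height subordinator has strictly positive drift coefficient -- if and only if it has a non-zero Brownian component (see, e.g., the discussion around the theorem on creeping in Chapter VI of \cite{bertoin}). Since $X$, and hence each of $Z^+$ and $Z^-$, has paths of unbounded variation by hypothesis, and since the Brownian coefficient of $Z^\pm$ equals $\sigma$, this theorem tells us that both ladder height subordinators have strictly positive drift if and only if $\sigma \neq 0$.

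Combining these two ingredients, the density of $\underline{Z^\pm}_\infty$ is bounded (in which case it is automatically absolutely continuous) if and only if $X$ has a non-zero Brownian component, which is the desired conclusion. The main obstacle is simply being careful with the time-reversal identity in the two-sided setting and confirming that the Brownian coefficient is preserved under the operations $t \mapsto \pm t$ and $X \mapsto \pm X + \alpha \cdot$; once that is in place, the hard analytic work has already been done in Lemma~\ref{lem:boundeddensity} (via Kesten's resolvent density theorem) and in Millar's creeping theorem.
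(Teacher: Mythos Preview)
Your overall strategy is the same as the paper's, but the key step you label as ``Millar's classical result'' is misstated, and the argument as written does not go through. It is \emph{not} true that a L\'evy process of unbounded variation creeps downwards if and only if it has a non-zero Brownian component: a spectrally positive stable process with index in $(1,2)$ has unbounded variation and zero Brownian component, yet it creeps downwards (having no negative jumps, it can only reach a negative level continuously). Applied to your $Z^+ = X + \alpha t$ for such an $X$, your criterion would wrongly predict that the descending ladder height of $Z^+$ has zero drift.

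What is true is that a L\'evy process creeps \emph{both} upwards and downwards if and only if it has a non-zero Brownian component (see \cite[Chapter 4, Corollary 4(i)]{doney}, or the remark after \cite[Theorem VI.19]{bertoin}). To reduce to this two-sided statement you need an ingredient you have omitted and which the paper invokes explicitly: Vigon's result \cite{MR1875147} that for processes of unbounded variation, creeping in a given direction is unaffected by adding a linear drift. With that in hand, $Z^+$ creeps downwards iff $X$ does, and $Z^-$ creeps downwards iff $-X$ does, i.e.\ iff $X$ creeps upwards; hence both descending ladder drifts are positive iff $X$ creeps in both directions, iff $\sigma \ne 0$. Once you correct the creeping criterion and insert the Vigon step, your outline coincides with the paper's proof.
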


\begin{proof} 
By Lemma \ref{lem:boundeddensity}, the distributions in question
are absolutely continuous with bounded densities
if and only if the drift coefficients of the
descending ladder processes for the two L\'evy processes
$(X_t + \alpha t)_{t \ge 0}$ 
and 
$(-X_{t} + \alpha t)_{t \ge 0}$ are non-zero. 
By the results of \cite{MR0321198} (see also \cite[Theorem
VI.19]{bertoin}), this occurs if and only if both 
$(X_t + \alpha t)_{t \ge 0}$ and $(-X_{t} + \alpha t)_{t \ge 0}$ 
have positive probability of creeping down across $x$ for some (equivalently, all) $x<0$, where we recall that a
L\'evy process creeps down across $x < 0$ if the first passage time in $(-\infty, x)$ is not a jump time for the path of the process. Equivalently, 
both densities exist and are bounded if and only if 
the L\'evy process
$(X_t + \alpha t)_{t \ge 0}$ creeps downwards and 
the L\'evy process
$(X_t - \alpha t)_{t \ge 0}$ creeps upwards, where
the latter notion is defined in the obvious way.

A result of Vigon \cite{MR1875147} 
(see also \cite[Chapter 6, Corollary 9]{doney}) states that when
the paths of $X$ have unbounded variation, $(X_t + \alpha t)_{t \ge 0}$ creeps
downward if and only if $X$ creeps downward, and hence, in turn, if and
only if $(X_t - \alpha t)_{t \ge 0}$ creeps downwards. A similar
result applies to creeping upwards. 

Thus, both  densities exist and
are bounded if and only if $X$ creeps downwards and upwards. 
This occurs if and only if 
the ascending and descending ladder processes of $X$ have
positive drifts \cite[Theorem VI.19]{bertoin}, which happens if and
only if $X$ has a 
non-zero Brownian component \cite[Chapter 4, Corollary
4(i)]{doney} (or see the remark after the proof of 
\cite[Theorem VI.19]{bertoin}).
\end{proof}

\section{The complementary interval straddling zero}
\label{sec:proofs}

\subsection{Distributions in the case of a non-zero Brownian
  component}
\label{sec:proofs1}

Suppose that $X = (X_t)_{t \in \reals}$ is a L\'evy process 
that satisfies our standing assumptions Hypothesis~\ref{H:standing}.  
Also, suppose until further notice
that $X$ has a non-zero Brownian component.

Recall that $M =
(M_t)_{t \in \mathbb{R}}$ is the $\alpha$-Lipschitz minorant of $X$
and
$\mathcal{Z}$ is the stationary regenerative set 
$\{t \in \reals : X_t \wedge X_{t-} = M_t\}$.  Recall also that
$K = D - G$, where
$
G = \sup\{ t < 0 : X_t \wedge X_{t-} = M_t \} = \sup\{t < 0 : t \in \mathcal{Z}\}
$
and
$
D = \inf\{ t > 0 : X_t \wedge X_{t-} = M_t \} = \inf\{t > 0 : t \in \mathcal{Z}\}
$.  Lastly, recall that $T$ is the unique $t \in [G,D]$ such that
$M_t = \max \{M_s : s \in [G, D]\}$.

Let $f^+$ (respectively, $f^-$) 
be the joint density of the 
random variables we denoted
by $(U_\infty,\underline{Z}_\infty)$
in Lemma~\ref{lem:abscont} in the case where the
L\'evy process $Z$ is $(X_t + \alpha t)_{t \ge 0}$
(respectively, $(-X_t + \alpha t)_{t \ge 0}$).

\begin{prop}
\label{prop:tlrdensity2}
Let $X$ be a L\'evy process that
satisfies our standing assumptions Hypothesis~\ref{H:standing}.  
Suppose, moreover,
that $X$ has a non-zero Brownian component.  
Set $L := T - G$ and $R := D - T$. Then,
the random vector $(T,L,R)$ has a distribution that
is absolutely continuous with respect to Lebesgue measure
with joint density 
\[
(\tau, \lambda, \rho) 
\mapsto 
2  \alpha \int_{-\infty}^0  f^-(\lambda,h) f^+(\rho,h) \, dh,
\quad  \text{$\lambda,\rho > 0$ and $\tau - \lambda < 0 < \tau + \rho$.}
\]
Therefore, $(T,G,D)$ also has 
an absolutely continuous distribution
with joint density 
\[
(\tau, \gamma, \delta) 
\mapsto 
2  \alpha \int_{-\infty}^0  f^-(\tau-\gamma,h) f^+(\delta-\tau,h) \, dh, \quad \text{$\gamma < 0 < \delta$ and $\gamma < \tau < \delta$,}
\]
and $K$ has an absolutely continuous distribution with density
\[
\kappa \mapsto 
2  \alpha \kappa \int_0^\kappa \int_{-\infty}^0 f^-(\xi,h) f^+(\kappa-\xi,h) \, dh \, d \xi, \quad \kappa > 0.
\] 
\end{prop}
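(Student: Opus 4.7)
The plan is to derive the joint density of $(T, L, R, H)$, where $H := X_T - M_T > 0$ is the peak height, via a path decomposition of $X$ around the peak $T$, and then to marginalize over $H$.

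First I would establish the following pathwise characterization: define
\[
V^+_s := X_{T+s} - X_T + \alpha s, \qquad W^-_s := X_{T-s} - X_T + \alpha s \qquad (s \ge 0).
\]
Using Lemma~\ref{L:sawtooth}, and the fact that the non-zero Brownian component makes $X$ almost surely continuous at the contact points $G$ and $D$, each of $V^+$ and $W^-$ attains its global infimum uniquely at $s = R$ and $s = L$ respectively, with common value $-H$. The inequality $V^+_s \ge -H$ on $[0, R]$ is immediate from $X \ge M$ together with the explicit sawtooth shape $M_{T+s} = M_T - \alpha s$, and equality at $s = R$ comes from the contact $X_D = M_D$; for $s > R$ one uses the global $\alpha$-Lipschitz bound $M_{T+s} \ge M_T - \alpha s$ together with the fact that subsequent sawteeth lie strictly above the extrapolated downslope (no coincidence of slopes between adjacent peaks, generically). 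The argument for $W^-$ is symmetric.

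Second, the L\'evy time-reversal duality $(X_{t_0} - X_{(t_0-s)-})_{s \ge 0} \stackrel{d}{=} (X_s)_{s \ge 0}$ at a fixed $t_0$ identifies the marginal laws: $W^-$ has the law of $(-X_s + \alpha s)_{s \ge 0}$ and $V^+$ that of $(X_s + \alpha s)_{s \ge 0}$. Under Hypothesis~\ref{H:standing} both of these processes drift to $+\infty$, so Lemma~\ref{lem:abscont} yields densities $f^-$ and $f^+$ on $(0, \infty) \times (-\infty, 0)$ for the argmin--infimum pairs. Applying the strong Markov property at the stopping time $S = S_0$ from the proof of Theorem~\ref{thm:Zregen} (and its backward-in-time analog), the past and future of $X$ decouple at a Palm-typical peak: conditional on $H = h$, the pairs $(L, H)$ on the left of $T$ and $(R, H)$ on the right are conditionally independent given the shared value $H$, with respective densities $f^-(l, -h)$ and $f^+(r, -h)$.

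Third, I invoke the Palm--Mecke identity for the stationary point process of sawtooth peaks. The sawtooth containing $0$ is the peak $T$ with $0 \in [G, D] = [T-L, T+R]$, i.e.\ $-R < T < L$, so
\[
f_{T, L, R, H}(\tau, l, r, h) \;=\; 2\alpha \cdot \mathbf{1}_{\{-r < \tau < l\}} \cdot f^-(l, -h)\, f^+(r, -h),
\]
where the prefactor $2\alpha$ is the Palm intensity of peaks and arises as the Jacobian associated with the slope jump of $M$ from $+\alpha$ to $-\alpha$ at the peak -- equivalently, as the rate at which the two competing infima become aligned as one varies a candidate peak location infinitesimally. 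Integrating out $h$ with the substitution $h \leftarrow -h$ yields the stated joint density of $(T, L, R)$. The density of $(T, G, D)$ follows by the unit-Jacobian change of variables $(T, L, R) \mapsto (T, T-L, T+R)$, and the density of $K = D-G$ follows by integrating $\tau$ over $(-\rho, \lambda)$ (producing the factor $\kappa = \lambda + \rho$) and then reparameterizing via $\xi = \lambda$, $\kappa - \xi = \rho$.

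The main obstacle is the third step: since $T$ is not a stopping time in any natural filtration, both the conditional-independence factorization of the Palm law and the precise value of the peak intensity $2\alpha$ require care. The natural route is to build the two-sided independence out of the strong Markov property at $S$ and its time-reversed counterpart (as in the proof of Theorem~\ref{thm:Zregen}), using Millar's zero-one law at a L\'evy infimum to handle uniqueness of the argmin, and then to pin down the prefactor either by a direct infinitesimal-perturbation calculation or by matching against an independently verifiable special case.
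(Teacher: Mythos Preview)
Your pathwise characterization in step~1 is essentially the same as the paper's equation~\eqref{E:TLR_char}, and it is correct.  The gap is exactly where you locate it: steps~2 and~3 are heuristics, not proofs.  The time-reversal duality you invoke holds only at deterministic times $t_0$, so it says nothing directly about the processes $V^+$ and $W^-$ started at the random, non-stopping time $T$.  Likewise, the Palm factorization --- that under the Palm law the left and right pieces are conditionally independent given $H$ with marginals $f^-(\cdot,-h)$ and $f^+(\cdot,-h)$ --- is precisely the content of the proposition, so asserting it is circular.  Your ``slope-jump Jacobian'' intuition for the factor $2\alpha$ is the right picture, but it is not a computation.

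The paper avoids Palm theory altogether and instead carries out the ``direct infinitesimal-perturbation calculation'' you mention as a fallback.  Concretely: fix a deterministic $\tau$ and small $\Delta\tau>0$, and split $X$ into its independent pieces on $(-\infty,\tau]$, $[\tau,\tau+\Delta\tau]$, and $[\tau+\Delta\tau,\infty)$.  Write $(U^-,\underline Z^-)$ and $(U^+,\underline Z^+)$ for the argmin/infimum pairs of the left- and right-tilted processes started at $\tau$ and $\tau+\Delta\tau$ respectively; these are genuinely independent with densities $f^-$ and $f^+$.  The key observation is that the event $\{T\in[\tau,\tau+\Delta\tau]\}$ is, up to $o(\Delta\tau)$ corrections controlled using the bounded densities from Corollary~\ref{cor:boundeddensities}, the event
\[
(\underline Z^+ - \underline Z^-) + (X_{\tau+\Delta\tau}-X_\tau) \in [-\alpha\Delta\tau,\,\alpha\Delta\tau].
\]
A Fubini argument shows this interval has mass $2\alpha\Delta\tau$ against the distribution of $\underline Z^+ - \underline Z^-$ shifted by the independent increment, and this is exactly where the factor $2\alpha$ comes from.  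One then passes to the limit via the Lebesgue differentiation theorem, using an \emph{a priori} bound on the density of $T$ obtained from the finite intensity $\Lambda(\reals_+)/\int x\,\Lambda(dx)$ of the peak process.  Your Palm route, if it could be made rigorous, would ultimately have to reproduce this same calculation; the paper's approach simply does it directly without the Palm scaffolding.
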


\begin{proof}
Observe that $X$ is abrupt and so,
by Theorem~\ref{thm:newLambda} and Remark~\ref{rem:abrupt}, $\mathcal{Z}$ 
is a stationary discrete random set with intensity 
$$\left( \frac{\int_{\reals_+} x \Lambda(dx)}{\Lambda(\reals_+)}
\right)^{-1} = \frac{\Lambda(\reals_+)}{\int_{\reals_+} x \Lambda(dx)}
< \infty.$$ Hence, the set of times of peaks of the $\alpha$-Lipschitz minorant
$M$ is also a stationary discrete random set with the same finite
intensity.  The point process consisting of a single point at
time $T$ is included in the set of times of peaks of $M$, and so for $A$ a
Borel set with Lebesgue measure $|A|$ we have 
\begin{eqnarray*}
\P\{T \in A\} & \leq & 
\P \left\{ \text{at least one peak of $M$ at a time $t \in A$} \right\} \\
& \leq  & \E \left[ \text{number of times of peaks in $A$} \right] \\
& = & \frac{ \Lambda(\reals_+) }{ \int_{\reals_+} x \Lambda(dx)} \,
|A| . \\
\end{eqnarray*}
Thus, the distribution of $T$ is absolutely continuous with respect to
Lebesgue measure with density bounded above by 
$\Lambda(\reals_+)/\int_{\reals_+} x \Lambda(dx)$.

It follows from the observations made in
the proof of Theorem~\ref{thm:Zregen} about the nature of the global
infimum of the process $\tilde{X}$ that under our
hypotheses, almost surely 
$X_G = X_{G-} = M_T - \alpha |G-T| = M_T - \alpha L$, $X_D = X_{D-} = M_T - \alpha |D-T| = M_T - \alpha R$,
and $X_t \wedge X_{t-} > M_T - \alpha |t-T|$ for $t \notin \{ G, D \}$.
Thus,
\[
0
=
\inf\{ X_{T+t} - (M_T - \alpha t) : t \ge 0 \} 
=
X_{T+R} - (M_T - \alpha R)
\] 
and
\[
\begin{split}
0
& =
\inf\{ X_{T+t} - (M_T + \alpha t) : t \le 0 \} \\
& =
\inf\{ X_{T-t} - (M_T - \alpha t) : t \ge 0 \}
=
X_{T-L} - (M_T - \alpha L). \\
\end{split}
\]
Consequently,
\begin{equation}
\label{E:TLR_char}
\begin{split}
X_{T-L} - X_T + \alpha L
& =
\inf\{ X_{T-t} - X_T + \alpha t : t \ge 0 \} \\
& =
\inf\{X_{T+t} - X_T + \alpha t : t \ge 0 )\}
=
X_{T+R} - X_T + \alpha R. \\
\end{split}
\end{equation}
Conversely, $(T,L,R)$ is the unique triple with
$T-L < 0 < T+R$ such that \eqref{E:TLR_char} holds.

Fix $\tau \in \mathbb{R}$ and $\lambda,\rho \in \mathbb{R}_+$
such that $\tau - \lambda < 0 < \tau + \rho$.  Set
\[
\begin{split}
Z_t^- & := X_{\tau - t} - X_\tau + \alpha t, \quad t \ge 0, \\
\underline{Z}^-   & := \inf\{Z_t^- : t \ge 0\}, \\
U^-   & := \inf\{t \ge 0: Z_t^-   = \underline{Z}^-\}. \\
\end{split}
\]
For $0 < \Delta \tau < \rho$ set
\[
\begin{split}
Z_t^+ & := X_{t + \tau + \Delta \tau} - X_{\tau + \Delta \tau} + \alpha t, \quad t \ge 0, \\
\underline{Z}^+    &  := \inf\{Z_t^+ : t \ge 0\}, \\
U^+   & := \inf\{t \ge 0: Z_t^+ = \underline{Z}^+\}. \\
\end{split}
\]

From \eqref{E:TLR_char} we have  that
\begin{equation}
\label{upper_LTR_bd}
\begin{split}
& \mathbb{P}
\{
T \in [\tau, \tau+\Delta \tau],\,
L > \lambda, \,
R >\rho
\} \\
& \quad \le
\mathbb{P}
(
\{
U^- > \lambda - \Delta \tau, \,
U^+ > \rho - \Delta \tau
\} \\
& \qquad \cap
\{
\exists 0 \le s \le \Delta \tau: 
X_\tau + \underline{Z}^- + \alpha s = X_{\tau + \Delta \tau} + \underline{Z}^+ + \alpha (\Delta \tau - s)
\}
). \\
\end{split}
\end{equation}

Similarly,
\begin{equation}
\label{lower_LTR_bd}
\begin{split}
& \mathbb{P}
\{
T \in [\tau, \tau+\Delta \tau],\,
L > \lambda, \,
R >\rho
\} \\
& \quad \ge
\mathbb{P}
(
\{
U^- > \lambda, \,
U^+ > \rho
\} \\
& \qquad \cap
\{
\exists 0 \le s \le \Delta \tau: 
X_\tau + \underline{Z}^- + \alpha s = X_{\tau + \Delta \tau} + \underline{Z}^+ + \alpha (\Delta \tau - s)
\} \\
& \qquad \cap
\{
\inf\{
X_{\tau+s} - (X_\tau  + \underline{Z}^- + \alpha s)
: 
0 \le s \le \Delta \tau
\}
> 0
\} \\
& \qquad \cap
\{
\inf\{
X_{\tau+\Delta \tau - s} - (X_{\tau+\Delta \tau}  + \underline{Z}^+ + \alpha s)
: 
0 \le s \le \Delta \tau
\}
> 0
\}
) \\
& \quad \ge
\mathbb{P}
(
\{
U^- > \lambda, \,
U^+ > \rho
\} \\
& \qquad \cap
\{
\exists 0 \le s \le \Delta \tau: 
X_\tau + \underline{Z}^- + \alpha s = X_{\tau + \Delta \tau} + \underline{Z}^+ + \alpha (\Delta \tau - s)
\}
) \\
& \qquad -
\mathbb{P}
(
\{
\exists 0 \le s \le \Delta \tau: 
X_\tau + \underline{Z}^- + \alpha s = X_{\tau + \Delta \tau} + \underline{Z}^+ + \alpha (\Delta \tau - s)
\} \\
& \qquad \quad \cap
\{
\inf\{
X_{\tau+s} - (X_\tau  + \underline{Z}^- + \alpha s)
: 
0 \le s \le \Delta \tau
\}
\le 0
\}
) \\
& \qquad -
\mathbb{P}
(
\{
\exists 0 \le s \le \Delta \tau: 
X_\tau + \underline{Z}^- + \alpha s = X_{\tau + \Delta \tau} + \underline{Z}^+ + \alpha (\Delta \tau - s)
\} \\
& \qquad \quad \cap
\{
\inf\{
X_{\tau+\Delta \tau - s} - (X_{\tau+\Delta \tau}  + \underline{Z}^+ + \alpha s)
: 
0 \le s \le \Delta \tau
\}
\le 0
\}
). \\
\end{split}
\end{equation}

Observe that
\[
\begin{split}
& \mathbb{P}
(
\{
\exists 0 \le s \le \Delta \tau: 
X_\tau + \underline{Z}^- + \alpha s = X_{\tau + \Delta \tau} + \underline{Z}^+ + \alpha (\Delta \tau - s)
\} \\
& \qquad  \cap
\{
\inf\{
X_{\tau+s} - (X_\tau  + \underline{Z}^- + \alpha s)
: 
0 \le s \le \Delta \tau
\}
\le 0
\}
) \\
& \quad =
\mathbb{P}
(
\{
\{
\exists 0 \le s \le \Delta \tau: 
(\underline{Z}^+ - \underline{Z}^-)  + (X_{\tau + \Delta \tau} - X_\tau) =  2 \alpha s - \alpha \Delta \tau
\} \\
& \qquad \quad \cap
\{
\underline{Z}^- \ge \inf_{0 \le s \le \Delta \tau} (X_{\tau+s} - X_\tau - \alpha s)
\}
) \\
& \quad = 
\mathbb{P}
(
\{
\{
(\underline{Z}^+ - \underline{Z}^-)  + (X_{\tau + \Delta \tau} - X_\tau)  \in  [-\alpha \Delta \tau, \alpha \Delta \tau]
\} \\
& \quad \qquad \cap
\{
\underline{Z}^- \ge \inf_{0 \le s \le \Delta \tau} (X_{\tau+s} - X_\tau - \alpha s)
\}
). \\
\end{split}
\]
By Corollary~\ref{cor:boundeddensities},
the independent random variables $\underline{Z}^-$ and $\underline{Z}^+$ 
have densities bounded by some constant $c$.  
Moreover, they are independent of 
$(X_{\tau+s})_{0 \le s \le \Delta \tau}$.  Conditioning on $\underline{Z}^-$
and $(X_{\tau+s})_{0 \le s \le \Delta \tau}$, we see that the last probability 
is, using $| \cdot |$ to denote Lebesgue measure, at most
\[
\begin{split}
& \E
[
c
|
[
\underline{Z}^-  - (X_{\tau + \Delta \tau} - X_\tau) - \alpha \Delta \tau,
\underline{Z}^-  - (X_{\tau + \Delta \tau} - X_\tau) + \alpha \Delta \tau
] 
| \\
& \qquad \times
\mathbf{1}
\{
\underline{Z}^- \ge \inf_{0 \le s \le \Delta \tau} (X_{\tau+s} - X_\tau - \alpha s)
\}
] \\
& \quad =
2 c \alpha \Delta \tau
\P
\{
\underline{Z}^- \ge \inf_{0 \le s \le \Delta \tau} (X_{\tau+s} - X_\tau - \alpha s)
\}. \\
\end{split}
\]
Consequently, 
\begin{equation}
\label{Zminus_ot}
\begin{split}
& \mathbb{P}
(
\{
\exists 0 \le s \le \Delta \tau: 
X_\tau + \underline{Z}^- + \alpha s = X_{\tau + \Delta \tau} + \underline{Z}^+ + \alpha (\Delta \tau - s)
\} \\
& \qquad  \cap
\{
\inf\{
X_{\tau+s} - (X_\tau  + \underline{Z}^- + \alpha s)
: 
0 \le s \le \Delta \tau
\}
\le 0
\}
) \\
& \quad =
o(\Delta \tau) \\
\end{split}
\end{equation}
as $\Delta \tau \downarrow 0$.

The same argument shows that 
\begin{equation}
\label{Zplus_ot}
\begin{split}
& \mathbb{P}
(
\{
\exists 0 \le s \le \Delta \tau: 
X_\tau + \underline{Z}^- + \alpha s = X_{\tau + \Delta \tau} + \underline{Z}^+ + \alpha (\Delta \tau - s)
\} \\
& \qquad  \cap
\{
\inf\{
X_{\tau+\Delta \tau - s} - (X_{\tau+\Delta \tau}  + \underline{Z}^+ + \alpha s)
: 
0 \le s \le \Delta \tau
\}
\le 0
\}
) \\
& \quad =
o(\Delta \tau) \\
\end{split}
\end{equation}
as $\Delta \tau \downarrow 0$.

Now,
\[
\begin{split}
& \mathbb{P}
(
\{
U^- > \lambda, \,
U^+ > \rho
\} \\
& \qquad \cap
\{
\exists 0 \le s \le \Delta \tau: 
X_\tau + \underline{Z}^- + \alpha s = X_{\tau + \Delta \tau} + \underline{Z}^+ + \alpha (\Delta \tau - s)
\}
) \\
& \quad = \mathbb{P}
(
\{
U^- > \lambda,  \,
U^+ > \rho
\} \\
& \qquad \cap
\{
\exists 0 \le s \le \Delta \tau: 
(\underline{Z}^+ - \underline{Z}^-)  + (X_{\tau + \Delta \tau} - X_\tau) =  2 \alpha s - \alpha \Delta \tau
\}
) \\
& \quad =
\mathbb{P}
(
\{
U^- > \lambda,\,
U^+ > \rho
\} \\
& \quad \qquad \cap
\{
(\underline{Z}^+ - \underline{Z}^-)  + (X_{\tau + \Delta \tau} - X_\tau) \in [- \alpha \Delta \tau, + \alpha \Delta \tau]
\}
). \\
\end{split}
\]

The random vectors
$(U^-,\underline{Z}^-)$ and $(U^+, \underline{Z}^+)$ are independent with
respective densities $f^-$ and $f^+$, and so the
joint density of $(U^-, U^+, \underline{Z}^+ - \underline{Z}^- )$ is
\[
(u,v,w) \mapsto \int_{-\infty}^\infty  f^-(u,h-w) f^+(v,h) \, dh.
\]
Thus, using the facts that the random variable
$\underline{Z}^+ - \underline{Z}^-$ is independent of
$X_{\tau + \Delta \tau} - X_\tau$ and the latter random variable has the same
distribution as $X_{\Delta \tau}$,
\[
\begin{split}
& \mathbb{P}
(
\{
U^- > \lambda,\,
U^+ > \rho 
\} \\
& \qquad \cap
\{
(\underline{Z}^+ - \underline{Z}^-)  + (X_{\tau + \Delta \tau} - X_\tau) \in [- \alpha \Delta \tau, + \alpha \Delta \tau]
\}
) \\
& \quad =
\int_{\lambda}^{\infty}  du \,
\int_{\rho}^{\infty}  dv \,
\int_{-\infty}^\infty dw \,
\int_{-\infty}^\infty dh  \\
& \qquad \times
\P\{-w- \alpha \Delta \tau < X_{\Delta \tau} < -w + \alpha \Delta \tau\} \,
f^-(u,h-w) f^+(v,h).  \\
\end{split}
\]

By Fubini's theorem,
\[
\begin{split}
& \int_{-\infty}^\infty dw \,
\P\{-w- \alpha \Delta \tau < X_{\Delta \tau} < -w + \alpha \Delta \tau\} \\
& \quad =
\E\left[
\int_{-\infty}^\infty dw \,
\mathbf{1} \{-X_{\Delta \tau} - \alpha \Delta \tau < w < -X_{\Delta \tau} + \alpha \Delta \tau\}
\right] \\
& \quad = \E\left[2 \alpha \Delta \tau\right]
= 2 \alpha \Delta \tau. \\
\end{split}
\]
Moreover, for any $\epsilon > \Delta \tau$,
\[
\begin{split}
& \int_{-\infty}^\infty dw \,
\P\{-w- \alpha \Delta \tau < X_{\Delta \tau} < -w + \alpha \Delta \tau\} 
\mathbf{1}\{|w| > \epsilon\} \\
& \quad =
\E\left[
\int_{-\infty}^\infty dw \,
\mathbf{1} \{-X_{\Delta \tau} - \alpha \Delta \tau < w < -X_{\Delta \tau} + \alpha \Delta \tau, \, |w| > \epsilon\}
\right] \\
& \quad =
\E\left[
(|X_{\Delta \tau}|-(\epsilon-\Delta \tau))_+ \wedge (2 \Delta \tau)
\right]. \\
\end{split}
\]
Note that $(\Delta \tau)^{-1} [(|X_{\Delta \tau}|-(\epsilon-\Delta \tau))_+ \wedge (2 \Delta \tau)] \le 2$ and that 
the random variable on the left of this inequality
converges to $0$ almost surely as $\Delta \tau \downarrow 0$.
Hence, by bounded convergence,
\[
\lim_{\Delta \tau \downarrow 0}
\int_{-\infty}^\infty dw \,
(\Delta \tau)^{-1}
\P\{-w- \alpha \Delta \tau < X_{\Delta \tau} < -w + \alpha \Delta \tau\} 
\mathbf{1}\{|w| > \epsilon\}
=
0.
\]

Furthermore,  the independent
random variables $\underline{Z}^-$ and $\underline{Z}^+$ 
both have bounded densities
by Corollary~\ref{cor:boundeddensities};  that is, the functions
$h \mapsto \int_0^\infty du \, f^-(u,h)$  
and
$h \mapsto \int_0^\infty dv \, f^+(v,h)$
both belong to $L^1 \cap L^\infty$. Therefore, the
functions
$h \mapsto \int_{\lambda}^\infty du \, f^-(u,h)$
and
$h \mapsto \int_{\rho}^\infty  dv \, f^+(v,h)$
both certainly belong to $L^1 \cap L^\infty$.

It now follows from the Lebesgue differentiation theorem that
\[
\begin{split}
& \lim_{\Delta \tau \downarrow 0}
(\Delta \tau)^{-1}
\int_{-\infty}^\infty dw \,
\P\{-w- \alpha \Delta \tau < X_{\Delta \tau} < -w + \alpha \Delta \tau\}
\int_{\lambda}^\infty du \,
f^-(u,h-w) \\
& \quad =
2 \alpha
\int_{\lambda}^\infty du \,
f^-(u,h) \\
\end{split}
\]
for Lebesgue almost every $h \in \reals$.  Moreover, 
the quantity on the left is bounded by 
$\sup_{h \in \reals} 2 \alpha
\int_{\lambda}^\infty du \,
f^-(u,h) < \infty$.  Therefore, by \eqref{upper_LTR_bd},
\eqref{lower_LTR_bd}, \eqref{Zminus_ot}, \eqref{Zplus_ot},
and bounded convergence,
\begin{equation}
\label{E:T_limit}
\begin{split}
& \lim_{\Delta \tau \downarrow 0}
(\Delta \tau)^{-1} 
\mathbb{P}
\{
T \in [\tau, \tau + \Delta \tau], \,
L > \lambda, \,
R > \rho
\} \\
& \quad =
2 \alpha \int_{\lambda}^\infty du \,
\int_{\rho}^\infty dv \,
\int_{-\infty}^\infty dh \, f^-(u,h) f^+(v,h). \\
\end{split}
\end{equation}

As we observed above, the measure
$\P\{T \in d\tau\}$ is absolutely continuous with
density bounded above by $\Lambda(\reals_+) < \infty$,
and so the same is certainly true of the measure
$
\P\{
T \in d\tau, \,
L > \lambda, \,
R > \rho
\}
$
for fixed $\lambda$ and $\rho$.
Therefore, by \eqref{E:T_limit} and the Lebesgue differentiation theorem,
\[
\begin{split}
& \P\{
T \in A, \,
L > \lambda, \,
R > \rho
\} \\
& \quad =
2 \alpha 
\int_{-\infty}^{\infty} d \tau \,
\int_{\lambda}^\infty du \,
\int_{\rho}^\infty dv \,
\int_{-\infty}^\infty dh \, f^-(u,h) f^+(v,h) 
\mathbf{1}\{\tau \in A\} \\
\end{split}
\]
for any Borel set $A \subseteq (-\rho,\lambda)$, 
and this establishes that $(T,L,R)$ has the claimed density.

The remaining two claims follow immediately.
\end{proof}

\begin{cor}
\label{cor:Klaplace}
Under the assumptions of Proposition~\ref{prop:tlrdensity2},
 \[
\begin{split}
\! \! \! \! \! \! \! \! 
\mathbb{E}[e^{-\theta K}] 
& =
- 4 \pi \alpha \frac{d}{d \theta}
\int_{-\infty}^\infty
\Biggl(
\exp \left\{
 \int_0^\infty dt \int_0^\infty [ e^{- \theta t + i z x}   - 1 ] t^{-1}
 \P\{X_t - \alpha t \in dx\}
 \right\} \\
& \qquad \qquad \qquad \qquad \qquad \times \exp \left\{
 \int_0^\infty dt \int_0^\infty [ e^{- \theta t - i z x}   - 1 ] t^{-1}
 \P\{- X_t - \alpha t \in dx\}
 \right\}
\Biggr) \, dz.\\
\end{split}
\]
\end{cor}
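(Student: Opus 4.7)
The plan is to start from the explicit density of $K$ given at the end of Proposition~\ref{prop:tlrdensity2} and simply compute the Laplace transform directly. Writing
\[
\mathbb{E}[e^{-\theta K}]
= 2\alpha \int_0^\infty \int_0^\kappa \kappa \, e^{-\theta \kappa}
\int_{-\infty}^0 f^-(\xi, h)\, f^+(\kappa - \xi, h) \, dh \, d\xi \, d\kappa,
\]
I would first introduce the convolution-friendly variables $u = \xi$, $v = \kappa - \xi$ (Jacobian $1$) to uncouple the two density factors. The prefactor $\kappa = u+v$ is then absorbed through the identity $(u+v) e^{-\theta(u+v)} = -\partial_\theta\bigl(e^{-\theta u} e^{-\theta v}\bigr)$, which pulls a derivative in $\theta$ outside the whole integral.

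After Fubini (to be justified using the boundedness of the densities $f^\pm$ given by Corollary~\ref{cor:boundeddensities}), what remains inside the derivative is
\[
\int_{-\infty}^0 \psi^-(\theta,h)\, \psi^+(\theta,h)\, dh,
\qquad
\psi^\pm(\theta, h) := \int_0^\infty e^{-\theta u} f^\pm(u,h)\, du.
\]
I would then convert this integral over $h$ into an integral over a dual Fourier variable $z$, either by inserting the delta-function identity $\int_\reals e^{iz(h-h')}\,dz = 2\pi \delta(h-h')$ or equivalently by applying Plancherel's theorem. This yields a constant multiple of $\int_\reals \Phi^-(\theta,-z)\Phi^+(\theta,z)\,dz$, where $\Phi^\pm(\theta,z)$ denote the joint Laplace/Fourier transforms $\mathbb{E}\bigl[e^{-\theta U^\pm_\infty + i z \underline{Z}^\pm_\infty}\bigr]$ associated with the arg-min and min of the L\'evy processes $(X_t + \alpha t)_{t \ge 0}$ and $(-X_t + \alpha t)_{t \ge 0}$, respectively.

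The final step is to identify $\Phi^\pm$ with the two exponential factors in the statement. This follows from the joint Pecherskii-Rogozin formula, which is the natural $(\theta, z)$-extension of the paper's \eqref{pechrogo2} and \eqref{pechrogo1}: one starts from the classical identity for an exponentially killed process (as in \cite[Theorem VI.5]{bertoin}) and passes to the limit as the killing rate tends to zero, which is valid since $|\mathbb{E}[X_1]| < \alpha$ forces both $Z^\pm$ to drift to $+\infty$ (ensuring $(U^\pm_\infty, \underline{Z}^\pm_\infty)$ are a.s.\ finite). Substituting $y=-x$ in each resulting exponent rewrites the integral over $(-\infty,0]$ against $\P\{\pm X_t + \alpha t \in dx\}$ as one over $[0,\infty)$ against $\P\{X_t \mp \alpha t \in dy\}$, reproducing exactly the two exp-braces in the statement of the corollary.

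The principal hurdles are bookkeeping---keeping track of the sign, conjugation, and $2\pi$ conventions in Plancherel so that the overall constant comes out as stated---and justifying the interchanges of integration and of differentiation with integration. The boundedness of $h \mapsto \psi^\pm(\theta,h)$, a direct consequence of Corollary~\ref{cor:boundeddensities} under the standing non-zero Brownian component hypothesis, is what ensures Plancherel and Fubini are both applicable; beyond that the computation is mechanical once the Pecherskii-Rogozin identification has been made.
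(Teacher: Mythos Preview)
Your proposal is correct and follows essentially the same route as the paper: both start from the density of $K$ in Proposition~\ref{prop:tlrdensity2}, decouple the convolution via the identity $\kappa e^{-\theta\kappa}=-\partial_\theta e^{-\theta\kappa}$ to obtain $-2\alpha\,\partial_\theta\int_{-\infty}^0 \psi^-(\theta,h)\psi^+(\theta,h)\,dh$, then apply Plancherel (using that $\psi^\pm(\theta,\cdot)\in L^1\cap L^\infty$ by Corollary~\ref{cor:boundeddensities}) and identify the resulting Fourier transforms via the joint Pecherskii--Rogozin formula. The only cosmetic difference is that the paper writes out the intermediate integral manipulations more explicitly before reaching the derivative form, whereas you invoke the $(u,v)$ substitution and the $\partial_\theta$ trick in one step.
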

\vspace{-10pt}
\begin{proof}
From Proposition~\ref{prop:tlrdensity2},
\begin{equation}
\label{eq:Klaplace1}
\begin{split}
& \mathbb{E}[e^{-\theta K}] \\
& \quad = 2  \alpha \int_{-\infty}^0 \int_0^\infty \kappa \int_0^\kappa f^-(\kappa-\xi,h) f^+(\xi,h) e^{-\theta \kappa} \, d \xi \, d \kappa \, dh \\
& \quad = 2  \alpha \int_{-\infty}^0 \int_0^\infty \int_\xi^\infty \kappa f^-(\kappa-\xi,h) f^+(\xi,h)  e^{-\theta \kappa} \, d \kappa \, d \xi \, dh \\
& \quad = 2  \alpha \int_{-\infty}^0  \Biggl(
\int_0^\infty f^+(\xi,h) e^{-\theta \xi} \int_\xi^\infty ( \kappa - \xi) f^-(\kappa-\xi,h) e^{-\theta ( \kappa - \xi) } \, d \kappa \, d \xi  \\
& \qquad 
+ \int_0^\infty \xi f^+(h,\xi) e^{-\theta \xi} \int_\xi^\infty f^-(h,\kappa-\xi) e^{-\theta ( \kappa - \xi) } \, d \kappa \, d \xi 
 \Biggr) \, dh \\
& = 2  \alpha \int_{-\infty}^0  \Biggl(
\int_0^\infty f^+(\xi,h) e^{-\theta \xi} \int_0^\infty  \kappa f^-(\kappa,h) e^{-\theta \kappa } \, d \kappa \, d \xi  \\
& \qquad
+ \int_0^\infty \xi f^+(\xi,h) e^{-\theta \xi} \int_0^\infty f^-(\kappa,h) e^{-\theta  \kappa  } \, d \kappa \, d \xi 
 \Biggr) \, dh \\
&  \quad = - 2 \alpha \frac{d}{d \theta} \Biggl( \int_{-\infty}^0 
\Bigl( \int_0^\infty f^+(\xi,h) e^{-\theta \xi} \, d \xi \Bigr) 
\Bigl( \int_0^\infty f^-(\kappa,h) e^{-\theta \kappa } \, d \kappa \Bigr) 
 \, dh \Biggr). \\
\end{split}
\end{equation}
% \begin{equation}
% \label{eq:Klaplace1}
% \begin{split}
% & \mathbb{E}[e^{-\theta K}]  \\
% & \quad =
% 2  \alpha \int_{-\infty}^0 \int_0^\infty \kappa \int_0^\kappa f^-(\kappa-\xi,h) f^+(\xi,h) e^{-\theta \kappa} \, d \xi \, d \kappa \, dh \\
% & \quad = 2  \alpha \int_{-\infty}^0 \int_0^\infty \int_\xi^\infty \kappa f^-(\kappa-\xi,h) f^+(\xi,h)  e^{-\theta \kappa} \, d \kappa \, d \xi \, dh \\
% & \quad = 2  \alpha \int_{-\infty}^0  \Biggl(
% \int_0^\infty f^+(\xi,h) e^{-\theta \xi} \int_\xi^\infty ( \kappa - \xi) f^-(\kappa-\xi,h) e^{-\theta ( \kappa - \xi) } \, d \kappa \, d \xi  \\
% & \qquad 
% + \int_0^\infty \xi f^+(h,\xi) e^{-\theta \xi} \int_\xi^\infty f^-(h,\kappa-\xi) e^{-\theta ( \kappa - \xi) } \, d \kappa \, d \xi 
%  \Biggr) \, dh \\
% & \quad = 2  \alpha \int_{-\infty}^0  \Biggl(
% \int_0^\infty f^+(\xi,h) e^{-\theta \xi} \int_0^\infty  \kappa f^-(\kappa,h) e^{-\theta \kappa } \, d \kappa \, d \xi  \\
% & \qquad 
% + \int_0^\infty \xi f^+(\xi,h) e^{-\theta \xi} \int_0^\infty f^-(\kappa,h) e^{-\theta  \kappa  } \, d \kappa \, d \xi 
%  \Biggr) \, dh \\
% & \quad = - 2 \alpha \frac{d}{d \theta} \Biggl( \int_{-\infty}^0 
% \Bigl( \int_0^\infty f^+(\xi,h) e^{-\theta \xi} \, d \xi \Bigr) 
% \Bigl( \int_0^\infty f^-(\kappa,h) e^{-\theta \kappa } \, d \kappa \Bigr) 
%  \, dh \Biggr) \\
% \end{split}
% \end{equation}
Viewing $\int_0^\infty f^+(\xi,h) e^{-\theta \xi} \, d \xi $ 
and $\int_0^\infty f^-(\kappa,h) e^{-\theta \kappa }  \, d \kappa $
as functions of $h$ that belong to
$L^1 \cap L^\infty \subset L^2$, we can use Plancherel's Theorem 
and then the Pecherskii-Rogozin formulas \cite[p. 28]{doney} again
to get that $\mathbb{E}[e^{-\theta K}]$ is 
 \[
\begin{split}
& - 2 \alpha \frac{d}{d \theta} 
 2 \pi \int_{-\infty}^\infty 
 \Biggl(
 \int_0^{\infty} \int_0^\infty f^+(\xi,-h) e^{izh - \theta \xi} \, d \xi \, dh \\
& \qquad \times \overline{
\int_0^{\infty} \int_0^\infty f^-(\kappa,-h) e^{izh - \theta \kappa} \, d \kappa \, dh
}
 \, dz \Biggr) \\
&  \quad = - 4 \pi \alpha \frac{d}{d \theta}
\int_{-\infty}^\infty
\Biggl(  
\exp \left\{
 \int_0^\infty dt \int_0^\infty [ e^{- \theta t + i z x}   -1 ] t^{-1}
 \P\{X_t - \alpha t \in dx\}
 \right\} \\
& \quad \qquad \times
\overline{
\exp \left\{
 \int_0^\infty dt \int_0^\infty [ e^{- \theta t + i z x}   -1 ] t^{-1}
 \P\{- X_t - \alpha t \in dx\}
 \right\}
}
\Biggr) \, dz \\
& \quad = 
- 4 \pi \alpha \frac{d}{d \theta}
\int_{-\infty}^\infty
\Biggl(
\exp \left\{
 \int_0^\infty dt \int_0^\infty [ e^{- \theta t + i z x}   - 1 ] t^{-1}
 \P\{X_t - \alpha t \in dx\}
 \right\} \\
& \quad \qquad \times \exp \left\{
 \int_0^\infty dt \int_0^\infty [ e^{- \theta t - i z x}   - 1 ] t^{-1}
 \P\{- X_t - \alpha t \in dx\}
 \right\}
\Biggr) \, dz .\\
% & \quad =
% - 4 \pi \alpha \frac{d}{d \theta}
% \int_{-\infty}^\infty
% \exp \biggl(
%  \int_0^\infty 
% t^{-1}
% \E\Bigl[
% \left(e^{- \theta t + i z X_t - i z \alpha t}   - 1\right)
% \mathbf{1}\{X_t \ge + \alpha t\} \\
% & \qquad \qquad +
% \left(e^{- \theta t + i z X_t + i z \alpha t}   - 1\right)
% \mathbf{1}\{X_t \le - \alpha t\}
% \Bigr]
% \, dt 
% \biggr)
% \, dz.\\
\end{split}
\]

\vspace{-20pt}

\end{proof}

\subsection{Extension to more general L\'evy processes}

Corollary~\ref{cor:Klaplace} establishes 
Theorem~\ref{thm:bigthm} when $X$ has a non-zero Brownian component. 
The next few results allow us establish the latter result
for the class of L\'evy processes described in its statement.

Recall the definitions
\[
\begin{split}
  G &:= \sup\{ t < 0 : X_t \wedge X_{t-} = M_t \} = \sup\{t < 0 : t
  \in \mathcal{Z}\}
  ,  \\
  D &:= \inf\{ t > 0 : X_t \wedge X_{t-} = M_t \} = \inf\{t > 0 : t
  \in \mathcal{Z}\}
  , \\
  T &:= \arg \max \{M_t : G \le t \le D\} , \\
  S &:= \inf \{ t > 0 : X_t \wedge X_{t-} - \alpha t 
  \leq \inf\{ X_{s} - \alpha s : s \le 0\} \}. \\
\end{split}
\]
As in the proof of Theorem~\ref{thm:Zregen}, 
it follows from Lemma~\ref{lem:recipe} that almost surely
\[
D = \inf \{ t \ge S : X_t \wedge X_{t-} + \alpha (t-S) = \inf \{ X_u
+ \alpha (u-S) : u \ge S\} \} .
\]

\begin{prop}
\label{prop:order}
Suppose that $X$ is  L\'evy process satisfying
our standing assumptions Hypothesis~\ref{H:standing}. 
Then, $\P\{0 \notin \mathcal{Z}, \, S=0\} = 0$. In addition,
\begin{itemize}
\item[(a)]
If $X$ has paths of unbounded variation,  then $G<T<S<D$ a.s.
\item[(b)]
If $X$ has paths of bounded variation and drift
coefficient $d$ satisfying $d < - \alpha$, then $G < T<S<D$ a.s.,
and if $X$ has paths of bounded variation and drift
coefficient $d$ satisfying $d > \alpha$, then $G < T<S \leq D$ a.s.
\item[(c)]
If $X$ has paths of bounded variation and drift 
coefficient $d$ satisfying $|d| < \alpha$, then almost surely either $0 \in
\mathcal{Z}$ and $G = T = S = D = 0$, or $0 \notin \mathcal{Z}$ and $G
\leq T \leq S \leq D$. Furthermore, $T=S=D$ almost surely 
on the event $\{T=S\}$.
\end{itemize}
\end{prop}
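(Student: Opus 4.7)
The plan is to establish the assertions by separating the first claim from the orderings, exploiting the sawtooth structure of $M$ on the component $(G, D)$ (Lemma~\ref{L:sawtooth}), the small-time behavior results \eqref{eq:shtatland_statement} and \eqref{eq:Rogozin_small_time}, and the descriptions of the local behavior of $X$ at local extrema (Propositions~\ref{prop:localExtrema_unbounded}, \ref{prop:localExtrema_bounded}, \ref{prop:localExtrema_zero}). A preliminary observation is that $X_0 = X_{0-} = 0$ almost surely, so $\{0 \in \mathcal{Z}\}$ coincides with $\{M_0 = 0\} = \{I^- = 0\} \cap \{I^+ = 0\}$, where $I^- := \inf\{X_s - \alpha s : s \le 0\}$ and $I^+ := \inf\{X_s + \alpha s : s \ge 0\}$ are the independent quantities from the proof of Theorem~\ref{thm:Zmeasure}.

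For the first assertion, my plan is to note that $S = 0$ forces $I^- = 0$ (since $X_t \wedge X_{t-} - \alpha t \to 0$ as $t \downarrow 0$ by right-continuity), reducing the claim to showing that the joint event $\{I^- = 0, I^+ < 0\}$ cannot coexist with $0\notin\mathcal{Z}$ except on a null set. I would combine the independence of $I^-$ and $I^+$ with the Pecherskii--Rogozin formulas \eqref{pechrogo4} and \eqref{pechrogo3} and the small-time regularity dichotomy to rule out this joint event.

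Next, I would establish the weak inequalities $G \leq T \leq S \leq D$ valid in all parts. The relations $G \leq T \leq D$ follow from the definition of $T$ as the argmax over $[G, D]$. For $T \leq S$, the inequality is automatic when $T \leq 0$ since $S \geq 0$, while for $T > 0$ the ascending portion of the sawtooth on $[G, T]$ satisfies $M_t - \alpha t = X_G - \alpha G = I^-$, so $X_t - \alpha t \geq I^-$ with strict inequality on the gap $(G, D) \setminus \mathcal{Z} \supseteq (0, T)$, yielding $S \geq T$. For $S \leq D$, I would invoke the recipe of Lemma~\ref{lem:recipe} used in the proof of Theorem~\ref{thm:Zregen}, which expresses $D$ as an infimum over times $s \geq S$.

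The strict inequalities in parts (a)--(c) then come from case analysis of the local behavior at the random time $D$, which is the time of a local infimum of the shifted L\'evy process $(X_t + \alpha t)_{t \geq 0}$. For (a), Proposition~\ref{prop:localExtrema_unbounded} gives $X_D = X_{D-}$ almost surely, so $X_{D-} = M_D < I^- + \alpha D$ (the latter inequality by the sawtooth structure and $T<D$), yielding $X_t < I^- + \alpha t$ for $t$ slightly less than $D$ and hence $S < D$; the other strict inequalities follow from the fact that $T \in (G,D) \setminus \mathcal{Z}$ gives $X_T > M_T$ almost surely. For (b), I would use Millar's jump-in/out theorem (via Proposition~\ref{prop:localExtrema_bounded}): when $d < -\alpha$, $X$ jumps up at $D$ with $X_{D-} = M_D < I^- + \alpha D$, so $S < D$; when $d > \alpha$, $X$ jumps down at $D$, so $X_{D-}$ may equal or exceed $I^- + \alpha D$, preventing strict inequality and leaving only $S \leq D$. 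For (c), Theorem~\ref{thm:Zmeasure} and Remark~\ref{R:Zmeasure}(ii) give that $\mathcal{Z}$ has infinite Lebesgue measure almost surely; on the positive-probability event $\{0 \in \mathcal{Z}\}$, zero is a Lebesgue density point of $\mathcal{Z}$, forcing $G = D = 0$ and hence $T = S = 0$, while on $\{0 \notin \mathcal{Z}\}$ only the weak inequalities hold, and the refinement $T = S \Rightarrow T = S = D$ follows from the recipe in Lemma~\ref{lem:recipe} applied at $T = S$. The main obstacle is the first assertion, which requires delicate control of the joint small-time behavior on both sides of zero.
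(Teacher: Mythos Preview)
Your outline has two genuine gaps.

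\textbf{The first assertion.} Your reduction is correct: $\{S=0,\,0\notin\mathcal{Z}\}\subseteq\{I^-=0,\,I^+<0\}$. But the plan to ``rule out this joint event'' via Pecherskii--Rogozin and regularity cannot succeed, because that event has positive probability in several of the cases under consideration. For instance, if $X$ has bounded variation with $d<-\alpha$, then $(-X_t+\alpha t)_{t\ge0}$ has drift $-d+\alpha>0$, so zero is not regular for $(-\infty,0]$ and $\P\{I^-=0\}>0$; meanwhile $(X_t+\alpha t)_{t\ge0}$ has drift $d+\alpha<0$, so $\P\{I^+=0\}=0$ and hence $\P\{I^+<0\}=1$. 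By independence, $\P\{I^-=0,\,I^+<0\}>0$. Moreover, on $\{I^-=0\}$ Shtatland gives $X_t-\alpha t<0$ for small $t>0$, so $S=0$ there. Thus your proposed argument breaks down precisely where it is needed. (The paper argues instead that $0\notin\mathcal{Z}$ forces $I^-<0$; you should examine that implication carefully too.)

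\textbf{The strict inequalities in (a) and (b).} Your plan is to analyze the path at the random time $D$ using the local-extrema propositions, deduce $X_{D-}=M_D<I^-+\alpha D$, and conclude $S<D$. But the inequality $M_D<I^-+\alpha D$ is equivalent to $T<D$ via the sawtooth description, and $T<D$ is one of the strict inequalities you are trying to prove; your appeal to ``$T\in(G,D)\setminus\mathcal{Z}$'' for the remaining strictness is likewise circular. The configuration $T=S=D$ is not excluded by anything you have written. The paper avoids this by analyzing the path at the \emph{stopping time} $S$ rather than at $D$: the strong Markov property combined with \eqref{eq:Rogozin_small_time} (unbounded variation) or \eqref{eq:shtatland_statement} (bounded variation, $d<-\alpha$) shows directly that the minorant cannot touch the path at $S$, whence $S\notin\mathcal{Z}$ and $S<D$; then Corollary~\ref{C:t_s_d} gives $T<S$, and time reversal gives $G<T$. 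Working at $S$ is the key device you are missing.

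Your treatment of the weak chain $G\le T\le S\le D$ (essentially reproving Corollary~\ref{C:t_s_d}) and of part~(c) is along the right lines and close to the paper's argument.
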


\begin{proof}
Firstly, if $0 \notin \mathcal{Z}$, then
$\inf \{ X_u - \alpha u : u \leq 0 \} < 0$, and thus $S>0$ a.s.
on the event $\{0 \notin \mathcal{Z}\}$. 

(a) Suppose that $X$ has paths of unbounded variation.
We have from Theorem~\ref{thm:Zmeasure} 
(see Remark~\ref{R:Zmeasure} (i)) that
$0 \notin \mathcal{Z}$ almost surely.
It follows from \eqref{eq:Rogozin_small_time} that
at the stopping time $S$
\[
- \liminf_{\eps \downarrow 0} \eps^{-1}(X_{S+\eps} - X_S) = \limsup_{\eps \downarrow 0}
\eps^{-1}(X_{S+\eps} - X_S) = \infty,
\]
and hence it is not possible for the 
$\alpha$-Lipschitz minorant to meet the path of $X$ at time $S$. 
Thus, $T < S < D$ almost surely by Corollary~\ref{C:t_s_d}.
By time reversal, $G < T$ almost surely.  

(b) Suppose $X$ has paths of bounded variation and drift
coefficient $d$ satisfying $|d| > \alpha$, then
we have from Theorem~\ref{thm:Zmeasure} 
(see Remark~\ref{R:Zmeasure} (ii)) that
$0 \notin \mathcal{Z}$ almost surely. Therefore, by 
Corollary~\ref{C:t_s_d}, if $T = S$ then $T=S=D$.

Suppose that $d<-\alpha$.
It follows from \eqref{eq:shtatland_statement}
that
\[
\lim_{\eps \downarrow 0} \eps^{-1}(X_{S+\eps} - X_S) = d, \quad \text{a.s.}
\]
Thus, $S \notin \mathcal{Z}$ and, in particular,
$S < D$, so that $T < S < D$ a.s.

On the other hand, if $d > \alpha$, then the L\'evy process
$(X_t - \alpha t)_{t \ge 0}$ has
positive drift and so the associated
descending ladder process has zero drift coefficient \cite[p. 56]{doney}. 
In that case, for any $x < 0$ we have $X_V < x$ almost surely, where 
$V := \inf\{t \ge 0 : X_t - \alpha t \le x\}$
\cite[Theorem III.4]{bertoin}. Therefore,
\[
X_S - \alpha S < \inf \{ X_{u} - \alpha u : u \le 0 \}  \quad
\text{a.s.} 
\]
If $T=S$, then $T=S=D$ by Corollary~\ref{C:t_s_d}, and then 
\[
\begin{split}
X_S 
& = X_D \wedge X_{D-} \\
& = X_G \wedge X_{G-} + \alpha(D-G) \\
& = X_G \wedge X_{G-} + \alpha(S-G), \\
\end{split}
\]
which results in the contradiction
\[
X_G \wedge X_{G-} - \alpha G 
= X_S - \alpha S 
 < \inf \{ X_{u} - \alpha u : u \le 0 \}. 
\]
Thus, $T<S \leq D$ a.s. 

The results for $G$ now follow by a time reversal argument.

(c) Suppose $X$ has paths of bounded variation and drift
coefficient $d$ satisfying $|d| > \alpha$.
We know from Theorem~\ref{thm:Zmeasure} and
Remark~\ref{R:Zmeasure} that the subordinator associated with
$\mathcal{Z}$ has non-zero drift and so $\mathcal{Z}$
has positive Lebesgue measure almost surely.  The subset of points
of $\mathcal{Z}$ that are isolated on either the left or
the right is countable and hence has zero Lebesgue measure.
It follows from the stationarity of $\mathcal{Z}$ that
$G = T = S = D = 0$ almost surely  
on the event 
$\{0 \in \mathcal{Z}\}$. The
remaining statements can be read from Corollary~\ref{C:t_s_d}.
\end{proof}

\begin{lem}
\label{lem:continuousmapping}
Let $X$ be a L\'evy process that satisfies our standing assumptions
Hypothesis~\ref{H:standing}.
Suppose, moreover, that if $X$ has paths of bounded variation, then
$|d| \ne \alpha$.
For $\eps > 0$ set $X^\eps = X + \eps B$, where $B$ is a standard
Brownian motion on $\reals$, independent of $X$. Define
$G^\eps$, $D^\eps$ and $K^\eps = D^\eps - G^\eps$
to be the analogues of $G$, $D$ and $K$ with
$X$ replaced by $X^\eps$.  Then, $(G^\eps,D^\eps)$
converges almost surely to $(G,D)$ as $\eps \downarrow 0$,
and so $K^\eps$ converges almost surely to $K$ as $\eps \downarrow 0$.
\end{lem}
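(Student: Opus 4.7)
I plan to prove this in two stages: first pathwise uniform convergence $M^\eps\to M$ on compacta, then transfer this to $(G^\eps,D^\eps)\to(G,D)$ using the sawtooth structure from Lemma~\ref{L:sawtooth}. For the uniform convergence, the strong law of large numbers applied to both $X$ and $B$, together with $|\E[X_1]|<\alpha$, produces almost surely a random $N'$ such that for all $|t|\le N$, $|s|\ge N'$, and $\eps\in(0,1]$, both $X_s+\alpha|t-s|$ and $X^\eps_s+\alpha|t-s|$ exceed $M(t)+1$. Hence the infima in the representation \eqref{mformula} for $M(t)$ and $M^\eps(t)$ are attained, up to any prescribed precision, within $|s|\le N'$, on which $|X^\eps_s-X_s|\le \eps\sup_{|s|\le N'}|B_s|\to 0$; this yields $\sup_{|t|\le N}|M^\eps(t)-M(t)|\to 0$ almost surely.

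In the generic case $G<0<D$, set $g:=X\wedge X_- - M\ge 0$ and $g^\eps:=X^\eps\wedge X^\eps_- - M^\eps = g+\eps B-(M^\eps-M)$, using that $B$ is continuous. The lower bound $\liminf D^\eps\ge D$ follows because $g$ is lower semicontinuous and strictly positive on $(G,D)$, so $c_0:=\inf_{[0,D-\delta]}g>0$ for any $\delta\in(0,D)$; the uniform convergence then forces $g^\eps\ge c_0/2$ on this set for small $\eps$, yielding $\mathcal{Z}^\eps\cap[0,D-\delta]=\emptyset$ and so $D^\eps>D-\delta$.

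For the upper bound $\limsup D^\eps\le D$ I would invoke Lemma~\ref{L:sawtooth}: $M$ has a strict V at $D$, so for $\eta>0$ smaller than both $D-T$ and the distance from $D$ to the next contact point of $\mathcal{Z}$, one has $M(D\pm\eta)=M(D)+\alpha\eta>M(D)$. Uniform convergence then forces $M^\eps$ to attain its minimum over $[D-\eta,D+\eta]$ at some interior $t^\eps$ for small $\eps$. Since $X^\eps$ has a non-zero Brownian component it is abrupt (Remark~\ref{rem:abrupt}), so $\mathcal{Z}^\eps$ is discrete (Theorem~\ref{thm:newLambda}) and Lemma~\ref{L:sawtooth} applied to $M^\eps$ says $M^\eps$ is a tent between successive contact points; hence its minimum on any sub-interval is attained at a contact point or endpoint, forcing $t^\eps\in\mathcal{Z}^\eps$. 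Combined with the lower bound, this yields $D^\eps\le t^\eps\le D+\eta$, and letting $\eta\downarrow 0$ gives the claim; $G^\eps\to G$ follows symmetrically.

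The main obstacle is that this sawtooth-based upper bound needs $D$ (and $G$) to be isolated in $\mathcal{Z}$ on one side. By Theorem~\ref{thm:Zmeasure}, this fails only in the bounded variation regime with $|d|<\alpha$, and then only on the event $\{0\in\mathcal{Z}\}$, where $G=D=0$ by Proposition~\ref{prop:order}(c). On this event I would argue directly: after first localizing $G^\eps, D^\eps$ to a random neighborhood of $0$, the contact identity $M^\eps(G^\eps)=X^\eps(G^\eps)\wedge X^\eps(G^\eps-)$, the sawtooth identity $M^\eps(0)=M^\eps(G^\eps)+\alpha|G^\eps|$ (on $\{T^\eps\ge 0\}$; symmetric otherwise), together with $M^\eps(0)\to M(0)=0$ and Shtatland's formula~\eqref{eq:shtatland_statement} for $X$, yield $(\alpha-d)|G^\eps|\to 0$; since $\alpha-d>0$, this forces $G^\eps\to 0$, and $D^\eps\to 0$ follows analogously.
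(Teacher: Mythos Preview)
Your uniform-convergence argument $M^\eps \to M$ and the lower bound $\liminf_{\eps\downarrow 0} D^\eps \ge D$ are fine, but the upper bound has a real gap. You need $D$ to be isolated in $\mathcal{Z}$ from the right in order to speak of ``the next contact point'' and conclude $M(D+\eta)=M(D)+\alpha\eta$. You invoke Theorem~\ref{thm:Zmeasure} to claim this only fails for bounded variation with $|d|<\alpha$, but that theorem characterizes when $\mathcal{Z}$ has zero Lebesgue measure (equivalently $\delta=0$), \emph{not} when $\mathcal{Z}$ is discrete. Discreteness needs both $\delta=0$ and $\Lambda(\reals_+)<\infty$, and Theorem~\ref{thm:newLambda} shows the latter can fail for unbounded-variation processes --- the truncated-Cauchy example in the paper is exactly such a case. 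There $D$ is almost surely a right-accumulation point of $\mathcal{Z}$, there is no ``next contact point'', and your V-argument collapses. The same problem arises for bounded variation with $|d|<\alpha$ on the event $\{0\notin\mathcal{Z}\}$ (which has positive probability): since $\delta>0$ there, an entire interval $[D,D+\gamma]$ lies in $\mathcal{Z}$, so $M=X\wedge X_-$ on it, and $M(D+\eta)-M(D)$ has no reason to be positive (if $d<0$ it is typically negative by Shtatland). Your separate treatment of $\{0\in\mathcal{Z}\}$ therefore does not exhaust the problematic cases.

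The paper sidesteps this by not analyzing $M$ near $D$ at all. It works with the intermediate stopping time $S$ and the representation of $D$ from Lemma~\ref{lem:recipe} as the time of the global infimum of $(X_u+\alpha(u-S))_{u\ge S}$. Convergence $S^\eps\to S$ is shown directly from the definition, and then $D^\eps\to D$ reduces to continuity of the argmin of a L\'evy-type path under the perturbation $X\mapsto X^\eps$, which holds because that argmin is almost surely unique (\cite[Proposition~VI.4]{bertoin}). This route is indifferent to whether $\mathcal{Z}$ is discrete.
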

\begin{proof}
By symmetry, it suffices to show that 
$D^\eps$
converges almost surely to $D$ as $\eps \downarrow 0$. We first show
the convergence on the event $\{S>0\}$.

Let $S^\eps$ be the analogue of the stopping time $S$ with $X$
replaced by $X^\eps$.  
As we observed in the proof of Theorem~\ref{thm:Zregen}, 
$X_S - \alpha S 
= X_S \wedge X_{S-} - \alpha S 
\le \inf \{ X_u - \alpha u : u \leq 0 \}$.
If $X$ has paths of unbounded variation
or bounded variation with drift satisfying $d < \alpha$, then, since $S$
is a stopping time, 
$\liminf_{u \downarrow S} (X_u - X_S - \alpha(u-S))/(u-S) < 0$.
If $X$ has paths of bounded variation with drift satisfying $d  > \alpha$,
then by the remarks at the top of page 56 of \cite{doney},
the downwards ladder height process of the process
$(X_u - \alpha u)_{u \ge 0}$ (resp. $(-X_u + \alpha u)_{u \ge 0}$)
has zero drift (resp. non-zero drift).
By Lemma~\ref{lem:boundeddensity}, the distribution of 
$\inf \{ X_u - \alpha u : u \leq 0 \}$ is absolutely continuous with
a bounded density, and hence
\[
\P \left\{ X_S - \alpha S = \inf \{ X_u - \alpha u : u \leq 0 \}
\right\}
=
0
\]
by Fubini's theorem and the fact that the range of
a subordinator with zero drift has zero Lebesgue measure almost surely.

For all three of these cases, 
given any $\delta > 0$ we can, with probability one,
thus find a time $t \in (S,S+\delta)$ such that 
$$
X_t \wedge X_{t-} - \alpha t <  \inf \{ X_u - \alpha u : u \leq 0 \}.
$$
By the strong law of large numbers for the Brownian motion $B$,
\[
\lim_{\eps \downarrow 0} 
\inf\{X_{u}^\eps - \alpha u : u \le 0\}
=
\inf \{ X_u - \alpha u : u \leq 0 \}.
\]
Hence, $X_t^\eps \wedge X_{t-}^\eps - \alpha t \le \inf\{
X_{u}^\eps - \alpha u : u \le 0\}$ for $\eps$ sufficiently small, and so
$S^\eps \le S + \delta$ for such an $\eps$.  
Therefore, $\limsup_{\eps \downarrow 0}
S^\eps \le S$.

On the other hand, for any $\delta > 0$ we have
\[
\inf\left\{
X_t \wedge X_{t-} - \alpha t 
- \inf \{ X_u - \alpha u
: u \le 0 \} 
: t \in [0, (S - \delta)_+]
\right\} > 0. 
\]
Thus, $X_t^\eps
\wedge X_{t-}^\eps - \alpha t > \inf\{ X_{u}^\eps - \alpha u : u \le
0\}$ for all $t \in [0, (S- \delta)_+]$ for $\eps$ sufficiently small, so
that $S^\eps \ge (S - \delta)_+$. Therefore, $\liminf_{\eps \downarrow 0}
S^\eps \ge S$. Consequently, $\lim_{\eps \downarrow 0} S^\eps = S$.

Now, as a result of the uniqueness of the global infima of L\'evy
processes that are not compound Poisson processes with zero drift
\cite[Proposition VI.4]{bertoin}, and the law of large numbers applied
to $B$, we have
\[
\lim_{\eps \downarrow 0} 
\arg \inf_{u \ge S^\eps} \{ X_u^\eps + \alpha (u-S^\eps)\}
= 
\arg \inf_{u \ge S} \{ X_u + \alpha (u-S)\} .
\]
It follows readily that $D^\eps$ converges
to $D$ almost surely as $\epsilon \downarrow 0$
on the event $\{S>0\}$.

Suppose now that we are on the event 
$\{S=0\}$. Then, by Proposition~\ref{prop:order}, $0 \in \mathcal{Z}$
almost surely, and we may suppose that $X$ satisfies the conditions of
part (c) of that result, so that $G=T=S=D=0$ almost surely. 
Then, by the strong law of large numbers 
for the Brownian motion $B$, almost surely
$$
\lim_{\eps \downarrow 0} \inf_{u \le 0} \{ X_u^\eps - \alpha u \}
 = 
\lim_{\eps \downarrow 0} \inf_{u \ge 0} \{ X_u^\eps + \alpha u \}
=
0.
$$
Therefore, $D^\eps$ also converges
to $D$ almost surely as $\epsilon \downarrow 0$ on the event
$\{S = 0\}$.
\end{proof}

We are finally in a position to give the proof of Theorem \ref{thm:bigthm}. 
Suppose for the moment that $X$ has a non-zero Brownian component. 
It follows from Theorem \ref{thm:Zmeasure} that $\delta = 0$, and hence
from \eqref{E:K_size_bias_Lambda} we have that
\begin{equation}
\label{E:K_Laplace_to_Lambda}
\begin{split}
\frac{\int_{\reals_+} (1-e^{-\theta x}) \, \Lambda( dx )}
{\int_{\reals_+} x \, \Lambda(dx)}
& = 
\frac{\int_{\reals_+} \left( \int_0^\theta x e^{- \varphi x} \, d \varphi
\right)  \,  \Lambda (dx)}
{\int_{\reals_+} x \,  \Lambda(dx)}
\\
& = 
\int_0^\theta
\left(
\frac{ \int_{\reals_+} x e^{- \varphi x} \,  \Lambda(dx)}
{\int_{\reals_+} x \,  \Lambda(dx)} 
\right) 
\, d \varphi 
 = 
\int_0^\theta \E[e^{-\varphi K}] \, d \varphi.\\
\end{split}
\end{equation}

By Corollary \ref{cor:Klaplace}, this last integral is
\begin{equation}
\label{eq:bigintegral}
\begin{split}
& 4 \pi \alpha 
\int_{-\infty}^\infty
\Biggl\{
\exp \biggl(
 \int_0^\infty 
t^{-1} 
\E\Bigl[
\left(e^{i z X_t - i z \alpha t}   - 1\right)
\mathbf{1}\{X_t \ge + \alpha t\} \\
& \qquad +
\left(e^{i z X_t + i z \alpha t}   - 1\right)
\mathbf{1}\{X_t \le - \alpha t\}
\Bigr]
\, dt 
\biggr) \\
& \quad - 
\exp \biggl(
 \int_0^\infty 
t^{-1} 
\E\Bigl[
\left(e^{- \theta t + i z X_t - i z \alpha t}   - 1\right)
\mathbf{1}\{X_t \ge + \alpha t\} \\
& \quad \qquad +
\left(e^{- \theta t + i z X_t + i z \alpha t}   - 1\right)
\mathbf{1}\{X_t \le - \alpha t\}
\Bigr]
\, dt 
\biggr)
\Biggr\}
\, dz, \\
\end{split}
\end{equation}
as claimed in the theorem.

Now suppose $X$ has zero Brownian component, but satisfies the
conditions of Theorem~\ref{thm:bigthm}. Since $X$ has paths of
unbounded variation almost surely, it follows from
Remark~\ref{R:Zmeasure} that $\delta = 0$. Let $X^\eps = X + \eps
B$ and $K^\eps$ be as in Lemma \ref{lem:continuousmapping}, and let
$\Lambda^\eps$ be the L\'evy measure of the
subordinator associated with the set of points where $X^\eps$ meets
its $\alpha$-Lipschitz minorant. By Lemma
\ref{lem:continuousmapping} we know that $K^\eps \to K$ almost surely,
and thus since $\delta = 0$, arguing as in
\eqref{E:K_Laplace_to_Lambda} we have
\begin{equation}
\label{eq:limit_representation} 
\begin{split}
\frac{\int_{\reals_+} (1-e^{-\theta x}) \, \Lambda( dx )}
{\int_{\reals_+} x \, \Lambda(dx)}
& =
\int_0^\theta \E[e^{-\varphi K}] \, d \varphi \\
& =
\lim_{\eps \downarrow 0}
\int_0^\theta \E[e^{-\varphi K^\eps}] \, d \varphi
=
\lim_{\eps \downarrow 0} 
\frac{\int_{\reals_+} (1-e^{-\theta x}) \, \Lambda^\eps( dx )}
{\int_{\reals_+} x \, \Lambda^\eps(dx)} . \\
\end{split}
\end{equation}

Now, in the notation of of the proof of Corollary~\ref{cor:Klaplace} ,
it can be seen that the square integrability of the densities of
$\inf_{t \ge 0} \{ X_t + \alpha t \}$ and $\inf_{t \ge 0} \{ -X_t +
\alpha t \}$ implies that
\[
\int_{-\infty}^0 
\Bigl( \int_0^\infty f^+(\xi,h) e^{-\theta \xi} \, d \xi \Bigr) 
\Bigl( \int_0^\infty f^-(\kappa,h) e^{-\theta \kappa } \, d \kappa \Bigr) 
 \, dh 
<
\infty
\]
for all $\theta \ge 0$. Thus, by the same methods used in the proof of
Corollary~\ref{cor:Klaplace} from the last line of \eqref{eq:Klaplace1}
onwards, it follows that \eqref{eq:bigintegral} is finite. Then, since for
each fixed value of $z$ the integrand in \eqref{eq:bigintegral} is a
product of characteristic functions of certain infima, and hence not equal
to zero, we can apply Fubini's theorem to swap the order of the
integrals within the exponentials (here we are using the absolute
continuity of the distribution of $X_t$ for all $t>0$).
We now have that the integrand for each fixed value of $z$ with $X_t$
replaced by $X^\eps_t$ converges to the inegrand with just $X_t$ as
$\eps \ra 0$. Then, by finiteness of \eqref{eq:bigintegral}, we have that
\eqref{eq:bigintegral} with $X_t$ replaced by $X^\eps_t$ converges to
\eqref{eq:bigintegral}.

It remains to show that $\Lambda(\reals_+) < \infty$. 
We have from \eqref{eq:Klaplace1} that
\[
\begin{split}
\frac{\int_{\reals_+} (1-e^{-\theta x}) \, \Lambda( dx )}
{\int_{\reals_+} x \, \Lambda(dx)}
& =
2 \alpha
\Biggl[
\Biggl( \int_{-\infty}^0 
\Bigl( \int_0^\infty f^+(\xi,h)  \, d \xi \Bigr) 
\Bigl( \int_0^\infty f^-(\kappa,h) \, d \kappa \Bigr) 
 \, dh \Biggr) \\
& \quad -
\Biggl( \int_{-\infty}^0 
\Bigl( \int_0^\infty f^+(\xi,h) e^{-\theta \xi} \, d \xi \Bigr) 
\Bigl( \int_0^\infty f^-(\kappa,h) e^{-\theta \kappa } \, d \kappa \Bigr) 
 \, dh \Biggr)
\Biggr] \\
& \to
2 \alpha
\Biggl( \int_{-\infty}^0 
\Bigl( \int_0^\infty f^+(\xi,h)  \, d \xi \Bigr) 
\Bigl( \int_0^\infty f^-(\kappa,h) \, d \kappa \Bigr) 
 \, dh \Biggr)
 < \infty \\
\end{split}
\]
as $\theta \to \infty$,
and we conclude that $\Lambda(\reals_+) < \infty$. \qed

\section{Lipschitz Minorants of Brownian Motion}
\label{sec:bm}

\subsection{Williams' path decomposition for Brownian motion with drift}
\label{SS:Williams}

We recall for later use a path composition due to David Williams 
that describes the distribution of a Brownian motion
with positive drift in terms of the segment of the
path up to the time the process achieves its
global minimum and the segment of the path
after that time -- see \cite[p. 436]{rogerswills} or, for a concise
description, \cite[Section IV.5]{handbook}. 

For $\mu \in \reals$, let $Z^{(\mu)} = (Z_t^{(\mu)})_{t \ge 0}$
be a Brownian motion with drift $\mu$ started at zero.
Take $\beta > 0$ and let $E$ be a random variable that is
independent of $Z^{(-\beta)}$ and has an exponential
distribution with mean $(2 \beta)^{-1}$. Set
\[
T_E := \inf \{ t \ge 0 : Z^{(-\beta)} = - E \}. 
\]
Then, there is a diffusion $W = (W_t)_{t \ge 0}$ 
with the properties
\begin{enumerate}[(i)]
  \item  $W$ is independent of $Z^{(-\beta)}$ and $E$;
  \item  $W_0 = 0$;
  \item  $W_t > 0$ for all $t>0$ a.s.;
\end{enumerate}
such that if we define a process $(\tilde Z_t)_{t \ge 0}$ by
\eq
\lb{decomp}
\tilde Z_t :=
\begin{cases}
  Z_t^{(-\beta)}, & 0 \le t < T_E, \\
  Z_{T_E}^{(-\beta)} + W_{t-T_E}, & t \ge T_E, \\
\end{cases}
\en 
then $\tilde Z$ has the same distribution as
$Z^{(\beta)}$.
Thus, in particular, 
\eq
\lb{infdist}
- \inf\{Z^{(\beta)}_t  : t \ge 0 \} \sim \mathrm{Exp}(2 \beta)
\en
and the unique time that $Z^{(\beta)}$ achieves its
global minimum is distributed as $T_E$.
Recall also that 
\eq 
\lb{hittime} 
\E[\inf \{ t \ge 0 : Z_t^{(-\beta)} = h \}] = \frac{h}{\beta} 
\en 
for $h \le 0$ (see, for example, 
\cite[page 295, equation $2.2.0.1$]{handbook}).

\subsection{Random variables related to the Brownian Lipschitz minorant}

\begin{prop}
\label{P:K_Laplace_drift_BM}
Let $X$ be a Brownian motion with drift $\beta$, where $|\beta| <
\alpha$. Then, the distribution of $K$
is characterized by
\[
\mathbb{E}[e^{-\theta K}]
= \frac{8 \alpha ( \alpha^2 -\beta^2) \left(\frac{1}{\sqrt{2 \theta + ( \alpha +\beta)^2
   }}+\frac{1}{\sqrt{2 \theta  + ( \alpha -\beta)^2 }}\right)}{\left(\sqrt{2 \theta  +( \alpha +\beta)^2
   }+\sqrt{2 \theta  +( \alpha -\beta)^2}+2  \alpha \right)^2}
\] 
for $\theta \ge 0$, and hence $\Lambda$ is characterized by
\[
\frac
{\int_{\reals_+} (1-e^{-\theta x}) \, \Lambda(dx)}
{\int_{\reals_+} x \, \Lambda(dx)}
= \frac{4 (\alpha^2 - \beta^2) \theta}
{
\left(\sqrt{2 \theta + (\alpha - \beta)^2} + \alpha - \beta \right)
\left(\sqrt{2 \theta + (\alpha + \beta)^2} + \alpha + \beta \right)
}
\]
for $\theta \ge 0$.
\end{prop}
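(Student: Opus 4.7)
The plan is to apply the intermediate formula
\[
\E[e^{-\theta K}] = -2\alpha\, \frac{d}{d\theta}\! \int_{-\infty}^0 \!\Bigl(\int_0^\infty \! f^+(\xi,h)\,e^{-\theta\xi}\,d\xi\Bigr) \Bigl(\int_0^\infty \! f^-(\kappa,h)\,e^{-\theta\kappa}\,d\kappa\Bigr) dh
\]
derived at the end of \eqref{eq:Klaplace1}, after computing the two inner Laplace transforms in closed form using Williams' path decomposition from Section \ref{SS:Williams}. Note that a Brownian motion with drift has a non-zero Brownian component, so both Proposition \ref{prop:tlrdensity2} and Corollary \ref{cor:Klaplace} are available.

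Since $|\beta|<\alpha$, the processes $(X_t+\alpha t)_{t\ge 0}$ and $(-X_t+\alpha t)_{t\ge 0}$ are Brownian motions with positive drifts $\alpha+\beta$ and $\alpha-\beta$, respectively. Williams' decomposition applied to the first gives that $-\underline{Z}^+_\infty$ has the $\mathrm{Exp}(2(\alpha+\beta))$ distribution, and that conditionally on $-\underline{Z}^+_\infty = e$ the time $U^+_\infty$ is distributed as the first passage time to $-e$ of an independent Brownian motion with drift $-(\alpha+\beta)$, or equivalently the first passage time to $e$ of a Brownian motion with drift $\alpha+\beta$. Using the classical formula $\E[e^{-\theta T_e}] = \exp(e(\alpha+\beta) - e\sqrt{(\alpha+\beta)^2+2\theta})$ for this Laplace transform, one finds for $h\le 0$ that
\[
\int_0^\infty f^+(\xi,h)\,e^{-\theta\xi}\,d\xi = 2(\alpha+\beta)\, e^{h[(\alpha+\beta)+\sqrt{(\alpha+\beta)^2+2\theta}]},
\]
and the analogous identity with $\alpha+\beta$ replaced by $\alpha-\beta$ holds for $f^-$. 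Writing $P := \sqrt{(\alpha+\beta)^2+2\theta}$ and $Q := \sqrt{(\alpha-\beta)^2+2\theta}$, the $h$-integral in the displayed formula is then an elementary exponential integral equal to $4(\alpha^2-\beta^2)/(2\alpha+P+Q)$, and one differentiation in $\theta$ (using $P' = 1/P$ and $Q' = 1/Q$) recovers the stated formula for $\E[e^{-\theta K}]$.

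For the second claim, since $X$ has a non-zero Brownian component, Remark \ref{R:Zmeasure}(i) gives $\delta = 0$, so the size-biasing identity \eqref{E:K_Laplace_to_Lambda} applies and reduces the task to computing $\int_0^\theta \E[e^{-\varphi K}]\, d\varphi$. The antiderivative was identified in the previous step; evaluating between $0$ and $\theta$ (with the value $(\alpha^2-\beta^2)/\alpha$ at $\theta=0$, where $P+Q = 2\alpha$) produces $2(\alpha^2-\beta^2)(P+Q-2\alpha)/(P+Q+2\alpha)$. The identities $P-(\alpha+\beta) = 2\theta/(P+\alpha+\beta)$ and $Q-(\alpha-\beta) = 2\theta/(Q+\alpha-\beta)$ let one rewrite $P+Q-2\alpha$ as $2\theta(P+Q+2\alpha)/[(P+\alpha+\beta)(Q+\alpha-\beta)]$, and the factor $P+Q+2\alpha$ cancels to give the advertised formula. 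No substantive obstacle is anticipated: once Williams' decomposition delivers the joint laws of $(U^\pm_\infty,\underline{Z}^\pm_\infty)$, the remaining work amounts to a single elementary integral, one differentiation, and an algebraic rationalization.
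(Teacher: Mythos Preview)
Your proposal is correct and follows essentially the same route as the paper: both plug explicit formulas for $\int_0^\infty f^\pm(\xi,h)e^{-\theta\xi}\,d\xi$ into \eqref{eq:Klaplace1}, evaluate the resulting elementary $h$-integral, differentiate, and then invoke \eqref{E:K_Laplace_to_Lambda} for the second claim. The only cosmetic difference is that the paper pulls those Laplace transforms from a handbook citation, whereas you rederive them from the Williams decomposition already recorded in Section~\ref{SS:Williams}; the resulting expressions and the subsequent algebra coincide.
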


\begin{proof}
We have from \cite[page 269, equation $1.14.3(1)$]{handbook} that
\[
\int_{-\infty}^0 f^-(\xi,h) e^{-\theta \xi} \, d \xi = 2(\alpha-\beta)e^{h(\sqrt{2 \theta + (\alpha - \beta)^2}+(\alpha -\beta))}
\]
and
\[
\int_0^\infty f^+(\xi,h) e^{-\theta \xi} \, d \xi 
= 2(\alpha+\beta)e^{h(\sqrt{2 \theta + (\alpha + \beta)^2}+(\alpha+\beta))}.
\]
Thus, from \eqref{eq:Klaplace1},
\[
\begin{split}
\mathbb{E}[e^{-\theta K}]
&  
= - 2 \alpha \frac{d}{d \theta} \Biggl( \int_{-\infty}^0 
4(\alpha^2-\beta^2)e^{h(\sqrt{2 \theta + (\alpha + \beta)^2}+\sqrt{2 \theta + (\alpha - \beta)^2}+2\alpha)}
 \, dh \Biggr) \\
\\
&  
= 8 \alpha (\alpha^2-\beta^2) \frac{d}{d \theta} \left( 
\frac{1}{\sqrt{2 \theta + (\alpha + \beta)^2}+\sqrt{2 \theta + (\alpha - \beta)^2}+2\alpha}
 \right) \\
&
= \frac{8 \alpha ( \alpha^2 -\beta^2) \left(\frac{1}{\sqrt{2 \theta + ( \alpha +\beta)^2
   }}+\frac{1}{\sqrt{2 \theta  + ( \alpha -\beta)^2 }}\right)}{\left(\sqrt{2 \theta  +( \alpha +\beta)^2
   }+\sqrt{2 \theta  +( \alpha -\beta)^2}+2  \alpha \right)^2},
\\
\end{split}
\] 
as required.

Now, by \eqref{E:K_Laplace_to_Lambda},
\[
\begin{split}
\frac
{\int_{\reals_+} (1-e^{-\theta x}) \, \Lambda(dx)}
{\int_{\reals_+} x \, \Lambda(dx)}
& = \int_0^\theta \E[e^{-\varphi K}] \, d\varphi \\
& = 8 \alpha (\alpha^2-\beta^2)
\biggl[
\frac{1}{\sqrt{(\alpha + \beta)^2}+\sqrt{(\alpha - \beta)^2}+2\alpha} \\
& \quad -
\frac{1}{\sqrt{2 \theta + (\alpha + \beta)^2}+\sqrt{2 \theta + (\alpha - \beta)^2}+2\alpha}
\biggr] \\
& = 
8 \alpha (\alpha^2-\beta^2)
\biggl[
\frac{1}{4 \alpha}
 \\
& \quad - 
\frac{1}{\sqrt{2 \theta + (\alpha + \beta)^2}+\sqrt{2 \theta + (\alpha - \beta)^2}+2\alpha}
\biggr] \\
& = \frac{4 (\alpha^2 - \beta^2) \theta}
{
\left(\sqrt{2 \theta + (\alpha - \beta)^2} + \alpha - \beta \right)
\left(\sqrt{2 \theta + (\alpha + \beta)^2} + \alpha + \beta \right)
}\\
\end{split} 
\]
after a little algebra.
\end{proof}

\begin{remark}
There is an alternative way to verify that the Laplace transform
for $K$ presented in Proposition~\ref{P:K_Laplace_drift_BM} is correct.  Recall from the proof of Theorem~\ref{thm:Zregen}
that $D = S + \tilde T$, where the independent random variables $S$
and $\tilde T$ are defined by
\[
S
= 
\inf \left\{ s > 0 : X_s - \alpha s
=
\inf\{ X_u  - \alpha u : u \le 0\}  \right\}
\]
and
\[
\tilde{T} 
= \sup \{ t \ge 0 : \tilde{X}_t = \inf\{\tilde{X}_s : s \ge 0\} \}
\]
with
\[
(\tilde{X}_s)_{s \ge 0} := \left( (X_{S+s} - X_S) + \alpha s
\right)_{s \geq 0}.
\]

Set $I^- := \inf\{ X_u  - \alpha u : u \le 0\}$.
Because $(X_{-t} + \alpha t)_{t \ge 0}$ is a Brownian motion with
drift $\alpha - \beta$, we  know from Subsection~\ref{SS:Williams} that 
$-I^-$ has an exponential distribution with
mean $(2(\alpha - \beta))^{-1}$.  Now $(X_t - \alpha t)_{t \ge 0}$
is a Brownian motion with drift $\beta - \alpha$, and so, again from  Subsection~\ref{SS:Williams}, $S$ is distributed as the time until
this process achieves its global minimum.  It follows that
\[
\E[e^{-\theta S}]
=
\frac{2(\alpha-\beta)}{\sqrt{2 \theta + (\alpha - \beta)^2} + \alpha - \beta}
\]
and
\[
\E[e^{-\theta \tilde T}]
=
\frac{2(\alpha+\beta)}{\sqrt{2 \theta + (\alpha + \beta)^2} + \alpha + \beta}
\]
-- see, for example, \cite[page 266, equation $1.12.3(2)$]{handbook}.

By stationarity, $D$ has the same distribution as $U(D-G) = UK$, where
$U$ is an independent random variable that is uniformly distributed on $[0,1]$.
Thus, 
\[
\E[e^{-\theta D}]
=
\int_0^1 \E[e^{- u \theta K}] \, du
=
\E\left[\frac{1}{\theta K} \left(1 - e^{- \theta  K}\right)\right]
=
\frac{1}{\theta} 
\frac
{\int_{\reals_+} (1-e^{-\theta x} ) \, \Lambda(dx)}
{\int_{\reals_+} x \, \Lambda(dx)},
\]
and
\[
\begin{split}
\frac
{\int_{\reals_+} (1-e^{-\theta x}) \, \Lambda(dx)}
{\int_{\reals_+} x \, \Lambda(dx)}
& =
\theta \E[e^{-\theta D}] \\
& = 
\theta \E[e^{-\theta S}] \E[e^{-\theta \tilde T}] \\
& =
\frac{4 (\alpha^2 - \beta^2) \theta}
{
\left(\sqrt{2 \theta + (\alpha - \beta)^2} + \alpha - \beta \right)
\left(\sqrt{2 \theta + (\alpha + \beta)^2} + \alpha + \beta \right)
}.\\
\end{split} 
\]
This equality agrees with the one found in
Proposition~\ref{P:K_Laplace_drift_BM}.  Differentiating
the expression on the right with respect to $\theta$ and recalling the
observation \eqref{E:K_Laplace_to_Lambda}, we arrive at the
the expression for the Laplace transform of $K$ 
in Proposition~\ref{P:K_Laplace_drift_BM}.
\end{remark}

\begin{prop}
Let $X$ be a Brownian motion with zero drift. Then,
\[
\P\{K \in d \kappa\} =
\left(
\frac{4 \alpha^3 }{ \sqrt{2 \pi} } \kappa^{1/2} e^{- \alpha^2 \kappa /2} - 4 \alpha^4 \kappa \Phi(- \alpha \kappa^{1/2})
\right) \, d\kappa,\\
\]
where $\Phi$ is the standard normal cumulative distribution function.
Thus, 
\[
\frac{\Lambda(dx)}{\Lambda(\reals_+)}
=
\frac{2 \alpha }{ \sqrt{2 \pi} } x^{-1/2} e^{- \alpha^2 x /2} - 2 \alpha^2 \Phi(- \alpha x^{1/2}) 
\] 
\end{prop}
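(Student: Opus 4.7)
The plan is to specialize Proposition~\ref{P:K_Laplace_drift_BM} to $\beta=0$ and then invert the resulting Laplace transform to recover the density of $\Lambda/\Lambda(\reals_+)$; the density of $K$ then follows by size-biasing.  Setting $\beta = 0$ in the formula for $\int(1-e^{-\theta x})\,\Lambda(dx)/\int x\,\Lambda(dx)$ and using the identity $(\sqrt{2\theta+\alpha^2}-\alpha)(\sqrt{2\theta+\alpha^2}+\alpha) = 2\theta$ to refactor, one obtains
\[
\frac{\int_{\reals_+}(1-e^{-\theta x})\,\Lambda(dx)}{\int_{\reals_+}x\,\Lambda(dx)}
= 2\alpha^2 - \frac{4\alpha^3}{\sqrt{2\theta+\alpha^2}+\alpha}.
\]
Letting $\theta\to\infty$ identifies the total mass ratio $\Lambda(\reals_+)/\int_{\reals_+} x\,\Lambda(dx) = 2\alpha^2$, so subtracting yields the clean identity
\[
\int_{\reals_+} e^{-\theta x}\,\frac{\Lambda(dx)}{\Lambda(\reals_+)}
= \frac{2\alpha}{\sqrt{2\theta+\alpha^2}+\alpha}, \quad \theta \ge 0.
\]

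The main analytic step is then to verify that the proposed density has exactly this Laplace transform.  The Gaussian factor $(2\pi x)^{-1/2}e^{-\alpha^2 x/2}$ has the standard Laplace transform $(2\theta+\alpha^2)^{-1/2}$.  For the second piece, I would integrate by parts in $\int_0^\infty e^{-\theta x}\Phi(-\alpha\sqrt{x})\,dx$, differentiating $\Phi(-\alpha\sqrt{x})$ to produce $-\frac{\alpha}{2\sqrt{2\pi x}}e^{-\alpha^2 x/2}$; this reduces to the first Laplace transform and yields
\[
\int_0^\infty e^{-\theta x}\,\Phi(-\alpha\sqrt{x})\,dx
= \frac{1}{2\theta}\left(1 - \frac{\alpha}{\sqrt{2\theta+\alpha^2}}\right).
\]
Assembling the two pieces with coefficients $2\alpha$ and $-2\alpha^2$ and rationalizing via the same identity used above collapses the answer to $2\alpha/(\sqrt{2\theta+\alpha^2}+\alpha)$.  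Uniqueness of Laplace transforms then identifies the density.

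For the density of $K$, I would invoke the size-biasing relation \eqref{E:K_size_bias_Lambda} of Remark~\ref{R:size-biasing}, which applies because $\delta = 0$ by Theorem~\ref{thm:Zmeasure} (Brownian paths have unbounded variation).  This gives $\P\{K \in d\kappa\} = \kappa\,\Lambda(d\kappa)\bigl/\int_{\reals_+} y\,\Lambda(dy) = 2\alpha^2\kappa\cdot\Lambda(d\kappa)/\Lambda(\reals_+)$; multiplying the density of $\Lambda/\Lambda(\reals_+)$ by the factor $2\alpha^2\kappa$ delivers the claimed density of $K$.  The only genuinely delicate step in this entire argument is the integration by parts for $\int_0^\infty e^{-\theta x}\Phi(-\alpha\sqrt{x})\,dx$ followed by the algebraic collapse to $2\alpha/(\sqrt{2\theta+\alpha^2}+\alpha)$; everything else is bookkeeping.
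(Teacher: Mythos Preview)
Your proposal is correct but takes a genuinely different route from the paper.  The paper works \emph{forward} from Proposition~\ref{prop:tlrdensity2}: it substitutes the explicit first-passage density
\[
f^-(\xi,h) = f^+(\xi,h) = \frac{-2\alpha h}{\sqrt{2\pi}\,\xi^{3/2}}\exp\!\Bigl\{-\frac{(\alpha\xi-h)^2}{2\xi}\Bigr\}
\]
into the double integral for the density of $K$, and then evaluates it directly via two changes of variable (first $y = \frac{1}{\xi(1-\xi)} - 4$ to collapse the inner integral to a Gaussian Laplace transform, then $z = 2\kappa^{-1/2}h - \alpha\kappa^{1/2}$ to handle the outer one).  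The $\Lambda$-density is then obtained by de-size-biasing.  You instead work \emph{backward} from the Laplace transform in Proposition~\ref{P:K_Laplace_drift_BM}: specialize to $\beta=0$, extract the Laplace transform of $\Lambda/\Lambda(\reals_+)$, and verify by direct computation that the stated density has that transform.  Your approach is computationally lighter (standard Laplace transforms and one integration by parts, rather than bespoke substitutions) and also reverses the logical order, obtaining $\Lambda$ first and $K$ second.  The paper's route has the advantage of being constructive---it \emph{derives} the density rather than merely confirming a given formula---but since the formula is part of the proposition statement, your verification strategy is a legitimate proof.
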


\begin{proof}
We have from \cite[page 269, equation $1.14.4(1)$]{handbook} that
\[
f^-(\xi,h) = f^+(\xi,h)
=
\frac{- 2 \alpha h }{ \sqrt{2\pi} \xi^{3/2} }  \exp \left\{ - \frac{( \alpha \xi -h)^2}{ 2 \xi} \right\}.
\]
Thus, by Proposition~\ref{prop:tlrdensity2},
\[
\begin{split}
\frac{\P\{ K \in d \kappa\} }{d\kappa}
& = 
\frac{ 4 \alpha^3 \kappa  e^{- \alpha^2 \kappa / 2 } }{ \pi }
\int_0^\kappa \int_{-\infty}^0 
\frac{ h^2 }{ \xi^{3/2} (\kappa-\xi)^{3/2} }  \exp \left\{ 2 \alpha h - \frac{ \kappa h^2}{ 2 \xi (\kappa - \xi)  } \right\} 
\, dh \, d \xi \\
& = 
\frac{ 4 \alpha^3  e^{- \alpha^2 \kappa / 2 } }{ \pi \kappa }
\int_{-\infty}^0 h^2 e^{ 2 \alpha h} 
\left( \int_0^1 
\frac{ 1 }{ \xi^{3/2} (1-\xi)^{3/2} }  \exp \left\{ - \frac{ h^2/2 \kappa }{  \xi (1 - \xi)  } \right\} \, d \xi \right)
\, dh.  \\
\end{split}
\] 
The change of variable $y = \frac{1}{\xi(1-\xi)} -4$ gives that
\[
\int_0^{1/2} \frac{ 1 }{ \xi^{3/2} (1-\xi)^{3/2} }  \exp \left\{ - \frac{ c }{  \xi (1 - \xi)  } \right\} \, d \xi = e^{-4c} \int_0^\infty z^{-1/2} e^{- c z} dz 
= \frac{e^{-4c} \sqrt{\pi}}{ \sqrt{c}}
\]
for any $c>0$,
and hence
\[
\begin{split}
\frac{\P\{ K \in d \kappa\}}{d\kappa} 
& =
\frac{ 4 \alpha^3 e^{- \alpha^2 \kappa / 2 } }{ \pi \kappa }
\int_{-\infty}^0 h^2 e^{2 \alpha h }
\frac{2 e^{-2h^2 / \kappa } \sqrt{\pi}}{ \sqrt{ h^2/2 \kappa }}
\, dh  \\
& =
- \frac{ 8 \sqrt{2} \alpha^3 e^{- \alpha^2  \kappa / 2 } }{ \sqrt{\pi \kappa} }
\int_{-\infty}^0 h  e^{2 \alpha h - 2h^2 / \kappa } 
\, dh.  \\
\end{split}
\] 
The further change of variable $z = 2 \kappa^{-1/2} h -  \alpha \kappa^{1/2} $ leads to
\[
\begin{split}
\frac{\P\{K \in d \kappa\}}{d\kappa}
& =
- 4 \alpha^3
\int_{-\infty}^{- \alpha \kappa^{1/2} } (\kappa^{1/2} z + \alpha \kappa  ) \frac{1}{ \sqrt{2 \pi} }  e^{- z^2 / 2  } 
\, dz  \\
& =
- 4 \alpha^3
\left( - \frac{\kappa^{1/2} }{ \sqrt{2 \pi} } e^{- \alpha^2 \kappa /2} + \alpha \kappa \Phi(- \alpha \kappa^{1/2} ) \right)  \\
& =
\frac{4 \alpha^3 }{ \sqrt{2 \pi} } \kappa^{1/2} e^{- \alpha^2 \kappa /2} - 4 \alpha^4 \kappa \Phi(- \alpha \kappa^{1/2}).\\
\end{split}
\] 

Because $\Lambda(dx)$ is proportional to 
$x^{-1} \P\{K \in dx\}$,
we need only find $\int_{\reals_+} x^{-1} \P\{K \in dx\}$
to establish the claim for $\Lambda$,
and this can be done
using methods of integration similar to those used in
Remark~\ref{R:K_density_integral} below to check
that the density of $K$ integrates to one.
\end{proof}

\begin{remark} 
\label{R:K_density_integral}
We can check directly that the density given for $K$ integrates to one. 
For the first term, we use the substitution $\eta = \alpha^2 \kappa/2$, and
for the second we use the substitution $\eta = \alpha^2 \kappa$ and
then change the order of integration to get that the integral
of the claimed density is
\[
\begin{split}
& \frac{4 }{ \Gamma(3/2) } \int_0^{\infty} \eta^{1/2} e^{- \eta} \, d \eta - 4 \int_0^\infty \eta \Phi(- \eta^{1/2}) \, d \eta \\
& \quad =
4 - \frac{4}{\sqrt{2 \pi} } \int_0^\infty \int_{\eta^{1/2}}^\infty \eta e^{-y^2/2} \, d y \, d \eta \\
& \quad =
4 - \frac{4}{\sqrt{2 \pi} } \int_0^\infty \left( \int_0^{y^2} \eta \, d \eta \right) e^{-y^2/2} \, d y \\
& \quad = 
4 - \frac{2}{\sqrt{2 \pi} } \int_0^\infty y^4 e^{- y^2/2} \, dy \\
& \quad =  
4 - \frac{3}{ \Gamma(5/2) } \int_0^\infty x^{3/2} e^{- x} \, \, dx \,
\, = \, 1. \\
\end{split}
\]
\end{remark}

\begin{prop}
\label{prop:Hlaplace}
Let $X$ be a Brownian motion with
drift $\beta$, where $|\beta| < \alpha$.
Recall that $T := \arg \max \{M_t : G \le t \le D\}$
and
$H := X_T - M_T $.
Then, $H$ has a  $\mathrm{Gamma}(2, 4 \alpha)$ distribution;
that is, the distribution of $H$ is absolutely continuous with
respect to Lebesgue measure with density
$h \mapsto (4 \alpha)^2 h e^{-4 \alpha h}$, $h \ge 0$.
Also,
\[
\P\{T>0\} = \frac{1}{2} \left( 1 + \frac{\beta}{\alpha} \right) ,
\]
and the distribution of $T$ is characterized by
\[
\E\left[e^{-\theta T}\right] = 8 \alpha (\alpha^2-\beta^2) \frac{1}{\theta} \left( \frac{1}{ \sqrt{(\alpha+\beta)^2 - 2 \theta} + 3 \alpha - \beta  } - \frac{1}{ \sqrt{(\alpha-\beta)^2 + 2 \theta} + 3 \alpha + \beta  } \right)
\]
for $- \frac{(\alpha-\beta)^2}{2} \le \theta \le \frac{(\alpha+\beta)^2}{2}$.
\end{prop}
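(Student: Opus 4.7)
The plan is to upgrade Proposition~\ref{prop:tlrdensity2} to a joint density for $(T, L, R, H)$ and then marginalize, reusing the explicit Brownian expressions from the proof of Proposition~\ref{P:K_Laplace_drift_BM}. The key first step is that the value $h$ at which the two independent infima $\underline{Z}^\pm_\infty$ are forced to coincide in the proof of Proposition~\ref{prop:tlrdensity2} equals $-H$: from the definitions one has $\underline{Z}^-_\infty = Z^-_L = X_G - X_T + \alpha L$, and the identity $X_G = M_T - \alpha L = X_T - H - \alpha L$ collapses this to $-H$, and symmetrically $\underline{Z}^+_\infty = -H$. Consequently, on $\{\lambda, \rho, h > 0,\ -\rho < \tau < \lambda\}$ the joint density of $(T, L, R, H)$ is
\[
f_{T,L,R,H}(\tau, \lambda, \rho, h) = 2\alpha\, f^-(\lambda, -h)\, f^+(\rho, -h).
\]

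For Brownian motion with drift $\beta$, the processes $(\pm X_t + \alpha t)_{t \ge 0}$ are Brownian motions with positive drifts $\nu^\pm := \alpha \pm \beta$, so $f^\pm$ has the closed form used already in the proof of Proposition~\ref{P:K_Laplace_drift_BM}. Every subsequent step reduces to three elementary integrals against $f^\pm(\cdot, h')$: total mass $2\nu^\pm e^{2\nu^\pm h'}$, first moment $-2h' e^{2\nu^\pm h'}$, and exponential transform $\int e^{su} f^\pm(u,h')\,du = 2\nu^\pm e^{h'(\nu^\pm + \sqrt{(\nu^\pm)^2 - 2s})}$, the last following from Williams' identification of the conditional law of $U^\pm_\infty$ given $\underline{Z}^\pm_\infty = h'$ as the first-passage time to $h'$ of drifted Brownian motion. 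With these in hand part (d) is a pure marginalization: integrating out $\tau$ (contributing the length factor $\lambda + \rho$) and then $\lambda$ and $\rho$ using the first two integrals collapses $f_H$ directly to $16\alpha^2 h e^{-4\alpha h}$. For $\P\{T > 0\}$ I would restrict the $\tau$-integral to $(0, \lambda)$ (factor $\lambda$) and reapply the same two integrals to obtain $\nu^+/(2\alpha) = \tfrac12(1+\beta/\alpha)$. For the Laplace transform of $T$ I would split $\E[e^{-\theta T}] = \int_0^\infty + \int_{-\infty}^0$ and use Fubini to perform the $\tau$-integrals first; the third exponential transform (Laplace form with $s = -\theta$ on the $T > 0$ side, MGF form with $s = +\theta$ on the $T < 0$ side) then produces exponentials in $h$ with decay rates $3\alpha + \beta + \sqrt{(\alpha-\beta)^2 + 2\theta}$ and $3\alpha - \beta + \sqrt{(\alpha+\beta)^2 - 2\theta}$ respectively, alongside two copies of the rate $4\alpha$ that cancel between the two halves, and evaluating $\int_0^\infty e^{-ch}\,dh = 1/c$ delivers exactly the stated formula.

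The calculation is essentially routine once the joint density is in place; the real care required is sign bookkeeping, namely the substitution $h' = -h$ that passes between the convention of $f^\pm(\cdot, h')$ with $h' < 0$ and of $H > 0$, and the Laplace-vs-MGF dichotomy that determines the sign of $2s$ under the square root. The stated range $-(\alpha-\beta)^2/2 \le \theta \le (\alpha+\beta)^2/2$ is exactly the joint domain of convergence of the MGF factor on the $T < 0$ side and of the outer $h$-integrals, so the formula extends to this full range by analytic continuation even though the argument above ran $\theta \ge 0$.
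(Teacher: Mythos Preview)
Your proposal is correct and follows essentially the same route as the paper: the paper likewise upgrades Proposition~\ref{prop:tlrdensity2} to the joint density $2\alpha f^-(\lambda,\eta)f^+(\rho,\eta)$ for $(T,L,R,-H)$, then marginalizes using exactly the total-mass and first-moment identities you list (deduced from Williams' decomposition) to obtain the $\mathrm{Gamma}(2,4\alpha)$ law for $H$ and the formula for $\P\{T>0\}$. For the Laplace transform of $T$ the paper merely says ``very similar methods,'' so your explicit split into $T>0$ and $T<0$ with the exponential transform $\int e^{su}f^\pm(u,h')\,du$ is a welcome spelling-out of what the paper leaves to the reader; the paper also offers, as an aside, a second derivation of $\P\{T>0\}$ directly from $\{T>0\}=\{I^+>I^-\}$ with $-I^\pm$ independent exponentials, which you might add as a sanity check.
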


\begin{proof} 
Consider the claim regarding the
distribution of $H$.  A slight elaboration of the proof of
Proposition~\ref{prop:tlrdensity2} shows, in the notation of that result,
that the random vector $(T,L,R,-H)$ has a distribution that is 
absolutely continuous with respect to Lebesgue measure with joint density
$(\tau,\lambda,\rho,\eta) \mapsto 2 \alpha f^-(\lambda,\eta) f^+(\rho,\eta)$,
$\lambda,\rho>0$, $\tau - \lambda < 0 < \tau + \rho$, $\eta < 0$.
Therefore,
\[
\P\{H \in dh\}
=
2  \alpha 
\int_0^\infty \int_0^\infty (\lambda + \rho) f^-(\lambda,-h) f^+(\rho,-h)
\, d\lambda d\rho d\eta.
\]

By \eqref{infdist},
\begin{equation}
\label{E:f-marg1}
\int_0^\infty f^-(\lambda, -h) \, d \lambda 
= 2(\alpha-\beta)e^{-2(\alpha-\beta) h}.
\end{equation}
Combining this with \eqref{hittime} gives
\begin{equation}
\label{E:f-marg2}
\int_{-\infty}^0 \lambda f^-(\lambda, -h) \, d \lambda 
= \frac{-\eta}{\alpha - \beta} \times 2(\alpha-\beta)e^{-2(\alpha-\beta) h} 
= 2 h \eta e^{-2(\alpha-\beta) h}.
\end{equation}
Similarly,
\begin{equation}
\label{E:f+marg1}
\int_0^\infty f^+(\rho, -h) \, d \rho 
= 2(\alpha+\beta)e^{-2(\alpha+\beta) h}
\end{equation}
and
\begin{equation}
\label{E:f+marg2}
\int_{-\infty}^0 \rho f^+(\rho, \eta) \, d \rho  
= 2 h e^{-2(\alpha+\beta) h}.
\end{equation}
Thus,
\[
\begin{split}
\P\{H \in dh\}
& =
2  \alpha 
\biggl[
2 h e^{-2(\alpha-\beta) h}
\times
2(\alpha+\beta)e^{-2(\alpha+\beta) h} \\
& \quad +
2(\alpha-\beta)e^{-2(\alpha-\beta) h}
\times
2 h e^{-2(\alpha+\beta) h}
\biggr] \, dh \\
& =
(4 \alpha)^2 h e^{-4 \alpha h} \, dh. \\
\end{split}
\]

Note that $T>0$ if and only if $I^+ > I^-$,
where
\[
I^+ := \inf\{X_t + \alpha t : t \ge 0\}
\quad
\text{ and }
\quad
I^- := \inf\{X_t - \alpha t : t \le 0\}.
\]
Recall from Subsection~\ref{SS:Williams} that the independent
random variables $I^+$ and $I^-$ are exponentially
distributed with respective means $(2(\alpha + \beta))^{-1}$
and $(2(\alpha - \beta))^{-1}$.  It follows that
\[
\P\{T > 0\} 
= 
\frac{2(\alpha + \beta)}{2(\alpha + \beta) + 2(\alpha - \beta)}
=
\frac{1}{2} \left( 1 + \frac{\beta}{\alpha} \right).
\] 

We can also derive this last result from Proposition~\ref{prop:tlrdensity2}
as follows.
\[
\begin{split}
\P\{T>0\}
& = 2 \alpha \int_{-\infty}^0 \int_0^\infty  \int_{-\infty}^0 \int_\tau^\infty
f^-(\tau-\gamma,h) f^+(\delta-\tau,h) \,
d \delta d \gamma  d\tau dh \\
& = 2 \alpha 
\int_{-\infty}^0 \int_0^\infty  \int_{-\infty}^0 
f^-(\tau-\gamma,h) \left( \int_0^\infty  f^+(\eta,h) \, d\eta \right)
dh  d \tau d \gamma \\
& = 2 \alpha \int_{-\infty}^0 
\left( \int_0^\infty \int_{-\infty}^0  f^+(\tau-\gamma,h) \, d \gamma \, d
  \tau \right)
\left( \int_0^\infty  f^+(\eta,h) \, d\eta \right)
\, dh\\
& = 2 \alpha \int_{-\infty}^0 
\left( \int_0^\infty \eta  f^-(\eta,h) \, d\eta \right)
\left( \int_0^\infty  f^+(\eta,h) \, d\eta \right)
\, dh.\\  
\end{split}
\]
Substituting in \eqref{E:f-marg2} and \eqref{E:f+marg1}, and then
evaluating the resulting straightforward integral establishes the result.

The Laplace transform of $T$ may be calculated using very similar
methods.
\end{proof}

\section{Some facts about Lipschitz minorants}
\label{sec:facts}

The following is a restatement of \eqref{mformula}  
accompanied by a proof.

\begin{lem} 
\label{L:minorant_explicit}
Suppose that the function $f: \reals \to \reals$ has $\alpha$-Lipschitz
minorant $m: \reals \to \reals$.  Then,
\[
\begin{split}
m(t) & = \sup \{ h \in \reals : h - \alpha|t-s| \leq f(s)  \text{ for all } s \in \reals \} \\
& = \inf \{f(s) + \alpha |t-s| : s \in \reals\}. \\
\end{split}
\]
\end{lem}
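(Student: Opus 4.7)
The plan is to write $m_1(t) := \sup \{ h \in \reals : h - \alpha|t-s| \leq f(s) \text{ for all } s \in \reals \}$ and $m_2(t) := \inf \{f(s) + \alpha |t-s| : s \in \reals\}$, establish $m_1 = m_2$ by a trivial rewriting, and then verify that $m_2$ satisfies the two characterizing properties of the Lipschitz minorant.

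First I would note that the inequality $h - \alpha|t-s| \leq f(s)$ is the same as $h \leq f(s) + \alpha|t-s|$, so that the set $\{ h : h - \alpha|t-s| \leq f(s) \text{ for all } s \}$ is exactly $(-\infty, m_2(t)]$. Its supremum is therefore $m_2(t)$, giving $m_1(t) = m_2(t)$. Note that under the standing hypothesis that an $\alpha$-Lipschitz minorant of $f$ exists, some $\alpha$-Lipschitz function $g \le f$ is available, so $m_2(t) \ge g(t) - \alpha|t-s| + \inf_s(f(s) - g(s) + \alpha|t-s| - \alpha|t-s|)$ bounds are finite; more simply, $m_2(t) \ge g(t) > -\infty$ by the argument below, while $m_2(t) \le f(t) < \infty$ by taking $s = t$.

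Next I would show that $m_2$ is itself an $\alpha$-Lipschitz minorant of $f$. The inequality $m_2 \le f$ follows from taking $s = t$ in the defining infimum. For the Lipschitz property, for any $t, t', s \in \reals$ the triangle inequality gives $|t-s| \le |t'-s| + |t-t'|$, hence $f(s) + \alpha|t-s| \le f(s) + \alpha|t'-s| + \alpha|t-t'|$; taking the infimum over $s$ yields $m_2(t) \le m_2(t') + \alpha|t-t'|$, and symmetry in $t, t'$ completes the argument.

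Finally, I would check the maximality: if $g$ is any $\alpha$-Lipschitz function with $g \le f$, then for each $t$ and $s$ one has $g(t) \le g(s) + \alpha|t-s| \le f(s) + \alpha|t-s|$; taking the infimum over $s$ gives $g(t) \le m_2(t)$. Hence $m_2$ dominates every $\alpha$-Lipschitz minorant of $f$ and is itself such a minorant, so $m_2 = m$, and combined with $m_1 = m_2$ this proves the lemma. There is no substantive obstacle here: the only subtlety worth flagging is confirming that $m_2$ takes values in $\reals$, which follows from the existence hypothesis (giving some $g \le f$ with $g$ $\alpha$-Lipschitz, hence $m_2 \ge g$ pointwise).
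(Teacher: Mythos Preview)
Your proof is correct, but it proceeds in the opposite direction from the paper's. You take the infimum expression $m_2$, verify directly that it is an $\alpha$-Lipschitz function dominated by $f$, and then show it majorizes every such function, so that $m_2$ must coincide with the minorant $m$. The paper instead starts from the already-given minorant $m$ and argues pointwise: for fixed $t$, if $h \le m(t)$ then the tent $s \mapsto h - \alpha|t-s|$ lies below $m$ (because $m$ is $\alpha$-Lipschitz) and hence below $f$, while if $h > m(t)$ then $(h - \alpha|t-\cdot|) \vee m$ would be an $\alpha$-Lipschitz function strictly dominating $m$ at $t$, forcing it to exceed $f$ somewhere. Your route is slightly more constructive and makes the existence hypothesis do explicit work (to guarantee $m_2 > -\infty$); the paper's route is marginally shorter and leans on the maximality of $m$ via a contradiction. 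One cosmetic point: the garbled inequality you wrote before ``more simply'' should be deleted, since the clean argument $g(t) \le m_2(t)$ that you give afterwards is exactly what is needed.
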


%%%%% finding minorant
\begin{figure}%[h]
\begin{center}
\includegraphics[width=2.5in]{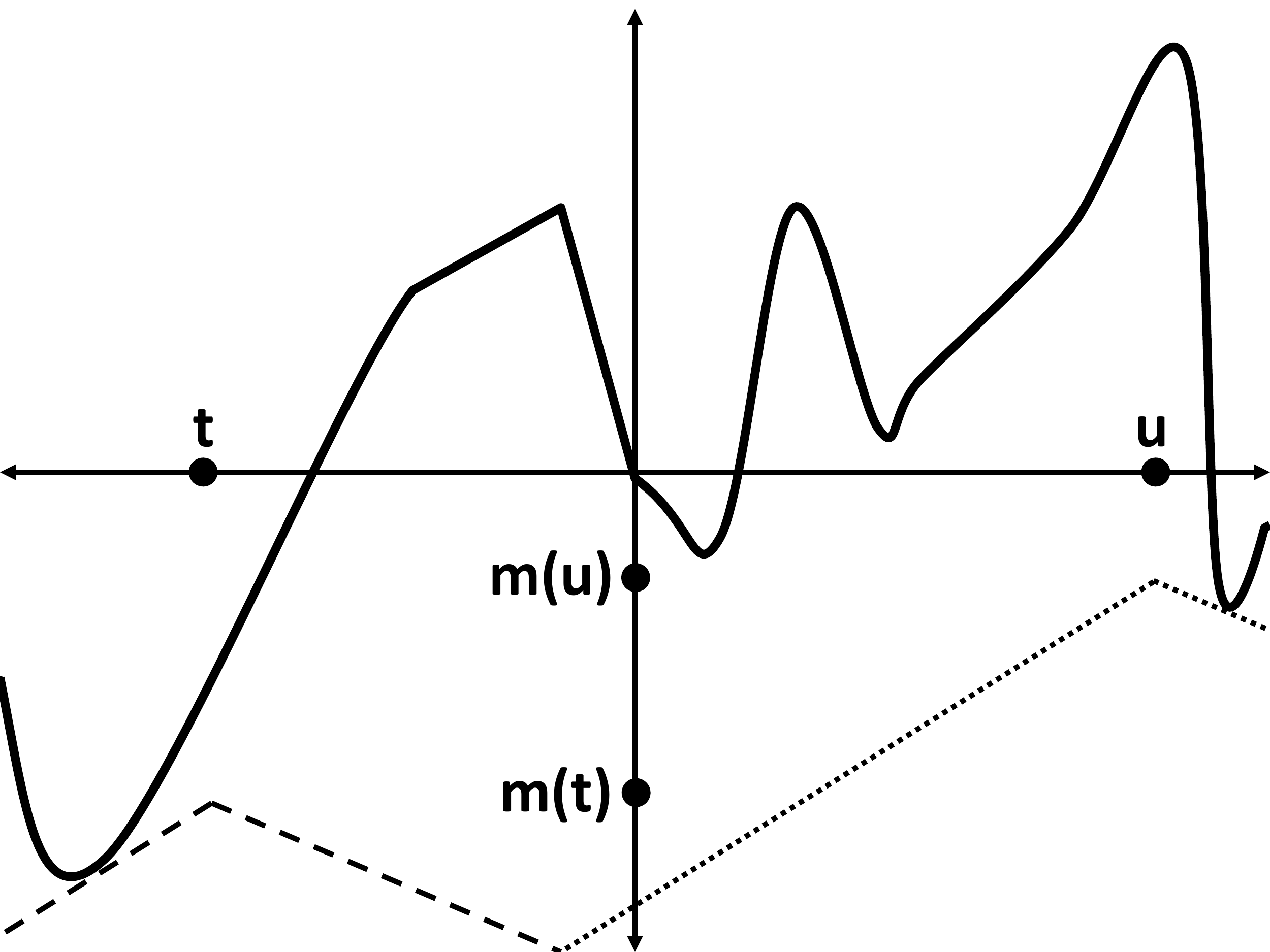}
\end{center}
\caption{Lemma~\ref{L:minorant_explicit} shows that the height of the
  $\alpha$-Lipschitz minorant of a function $f$ at a fixed time $t$ is given by $\sup \{ h \in \reals : h - \alpha|t-s| \leq f(s)  \text{ for all } s \in \reals \} $.}
\label{fig:height}
\end{figure}

\begin{proof}
Consider the first equality.  Fix $t \in \reals$. Because $m$ is $\alpha$-Lipschitz, if $h \le m(t)$, then
$h - \alpha |t-s| \le m(t) - \alpha |t-s| \le m(s) \le f(s)$
for all $s \in \reals$.  On the other hand, if $h > m(t)$, then
$s \mapsto (h - \alpha |t-s|) \vee m(s)$ is an $\alpha$-Lipschitz
function that dominates $m$ (strictly at $t$), and so 
$(h - \alpha |t-s|) \vee m(s) > f(s)$ for some $s \in \reals$.
This implies that $h - \alpha |t-s|  > f(s)$, since $m(s) \le f(s)$.
The second equality is simply a rephrasing of the first.
\end{proof}

We leave the proof of the following straightforward consequence of
Lemma~\ref{L:minorant_explicit} to the reader.

\begin{cor}
\label{C:flow}
Suppose that the function $f: \reals \to \reals$ has $\alpha$-Lipschitz
minorant $m: \reals \to \reals$.  Define functions
$f^\leftarrow: \reals \to \reals$
and
$f^\rightarrow: \reals \to \reals$
by
\[
f^\leftarrow(t)
:=
\begin{cases}
f(t),& t < 0, \\
m(0) - \alpha t,& t \ge 0,
\end{cases}
\]
and
\[
f^\rightarrow(t)
:=
\begin{cases}
m(0) + \alpha t,& t \le 0, \\
f(t),& t > 0.
\end{cases}
\]
Denote the $\alpha$-Lipschitz minorants of $f^\leftarrow$ and $f^\rightarrow$
by $m^\leftarrow$ and $m^\rightarrow$, respectively.  Then,
$m^\leftarrow(t) = m(t)$ for all $t \le 0$ and $m^\rightarrow(t) = m(t)$ for all $t \ge 0$.
\end{cor}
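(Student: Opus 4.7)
The plan is to apply the explicit formula $m(t) = \inf_s\{f(s) + \alpha|t-s|\}$ from Lemma~\ref{L:minorant_explicit} and split the defining infimum at $s=0$. I will prove the assertion for $m^\leftarrow$ on $t \le 0$; the statement for $m^\rightarrow$ on $t \ge 0$ then follows by symmetry applied to the time-reversed function $\tilde f(u) := f(-u)$. The key algebraic observation is that for $t \le 0$ and $s \ge 0$ one has $|t-s| = s-t$, so
\[
f^\leftarrow(s) + \alpha|t-s| \;=\; (m(0) - \alpha s) + \alpha(s-t) \;=\; m(0) - \alpha t,
\]
a constant in $s$. Thus the $s \ge 0$ part of the infimum defining $m^\leftarrow(t)$ collapses to the single value $m(0) - \alpha t$, while the $s < 0$ part is literally identical to the corresponding part of the infimum defining $m(t)$.

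Writing $A(t) := \inf_{s<0}\{f(s)+\alpha|t-s|\}$ and $B(t) := \inf_{s \ge 0}\{f(s)+\alpha|t-s|\}$, the previous paragraph gives, for $t \le 0$,
\[
m^\leftarrow(t) \;=\; A(t) \wedge (m(0) - \alpha t), \qquad m(t) \;=\; A(t) \wedge B(t).
\]
I would then verify two one-sided inequalities. For $m(t) \le m^\leftarrow(t)$, the $\alpha$-Lipschitz property of $m$ together with $|t| = -t$ yields $m(t) \le m(0) + \alpha|t| = m(0) - \alpha t$, while $m(t) \le A(t)$ is trivial since $A(t)$ is an infimum over a smaller set; hence $m(t) \le A(t) \wedge (m(0)-\alpha t) = m^\leftarrow(t)$. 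For the reverse inequality it suffices to show $m(0)-\alpha t \le B(t)$. But the very definition of $m(0)$ gives $m(0) \le f(s) + \alpha|s| = f(s) + \alpha s$ for any $s \ge 0$; adding $-\alpha t$ to both sides yields $m(0) - \alpha t \le f(s) + \alpha(s-t) = f(s) + \alpha|t-s|$, and taking the infimum over $s \ge 0$ delivers the claim.

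I do not expect any genuine obstacle: the argument is essentially careful bookkeeping of the split at $s=0$, and the one substantive point is recognizing that the cone $s \mapsto m(0) - \alpha s$ appended on $[0,\infty)$ is engineered precisely so that its contribution to the explicit formula at any $t \le 0$ is the constant $m(0) - \alpha t$, which the Lipschitz property of $m$ already dominates on the left half-line.
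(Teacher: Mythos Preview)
Your argument is correct and is precisely the intended one: the paper does not write out a proof of this corollary but simply states that it is a ``straightforward consequence of Lemma~\ref{L:minorant_explicit},'' and your split of the infimum at $s=0$ together with the two one-sided bounds is exactly the routine verification the authors had in mind.
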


The next result says that if $f$ is a c\`adl\`ag function with
$\alpha$-Lipschitz minorant $m$, then on an open interval
in the complement of the closed set 
$\{t \in \reals: m(t) = f(t) \wedge f(t-)\}$ the graph
of the function $m$ is either a straight line or a ``sawtooth''.

\begin{lem}
\label{L:sawtooth}
Suppose that 
$f: \reals \to \reals$ be a c\`adl\`ag function with
$\alpha$-Lipschitz minorant $m : \reals \to \reals$.
The set $\{t \in \reals: m(t) = f(t) \wedge f(t-)\}$ is closed.
If $t' < t''$ 
are such that $f(t') \wedge f(t'-) = m(t')$,
$f(t'') \wedge f(t''-) = m(t'')$, and $f(t) \wedge f(t-) > m(t)$
for $t' < t < t''$, then, setting
$t^* = (f(t'') \wedge f(t''-) - f(t') \wedge f(t'-) + \alpha (t'' + t'))/(2 \alpha)$,
\[
m(t) 
= 
\begin{cases}
f(t') \wedge f(t'-) + \alpha(t - t'), 
& t' \le t \le t^*, \\
f(t'') \wedge f(t''-) + \alpha(t'' - t),
& t^* \le t \le t''. \\
\end{cases}
\]
\end{lem}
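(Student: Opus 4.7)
The plan is to first check that $\mathcal{Z} := \{t : m(t) = f(t) \wedge f(t-)\}$ is closed, and then to pin down the shape of $m$ between two consecutive contact points $t' < t''$. Set $f^*(t) := f(t) \wedge f(t-)$. Because $f$ is c\`adl\`ag, $\liminf_{s \to t} f(s) = f^*(t)$, so $f^*$ is lower semicontinuous; because $m$ is continuous and dominated by $f$, sending $s \uparrow t$ in $m(s) \le f(s)$ gives $m(t) \le f(t-)$, whence $m \le f^*$ pointwise. The function $f^* - m$ is then nonnegative and lower semicontinuous, so $\mathcal{Z} = \{f^* - m = 0\} = \{f^* - m \le 0\}$ is closed.

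For the sawtooth statement, introduce the two $\alpha$-Lipschitz lines $\phi(t) := f^*(t') + \alpha(t-t')$ and $\psi(t) := f^*(t'') + \alpha(t''-t)$; the formula claims $m = \phi \wedge \psi$ on $[t', t'']$, with crossover at the explicit $t^*$. The upper bound $m \le \phi \wedge \psi$ falls straight out of the infimum formula of Lemma~\ref{L:minorant_explicit}: the choices $s = t'$ and $s \uparrow t'$ give $m(t) \le f(t') + \alpha(t-t')$ and $m(t) \le f(t'-) + \alpha(t-t')$, whose minimum is $\phi(t)$, and symmetrically $m(t) \le \psi(t)$. Lipschitz continuity of $m$ forces $\phi(t') \le \psi(t')$ and $\psi(t'') \le \phi(t'')$, so $\phi \wedge \psi$ has the claimed tent shape with apex at $t^*$.

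The main step is the reverse inequality $m(t_0) \ge \phi(t_0) \wedge \psi(t_0)$ for a fixed $t_0 \in (t', t'')$, and the key device is to extract a witness contact point. Pick a minimizing sequence $s_n$ with $f(s_n) + \alpha|t_0 - s_n| \to m(t_0)$; the existence of the minorant forces $(s_n)$ to be bounded, and after passing to a subsequence we may assume $s_n \to s^*$ monotonically from one side (the possibility $s_n = t_0$ infinitely often would force $m(t_0) = f(t_0) \ge f^*(t_0) \ge m(t_0)$ and hence $t_0 \in \mathcal{Z}$, contradicting the hypothesis). By right-continuity or the left limit of $f$, one of $f(s^*)$ or $f(s^*-)$ equals $\lim f(s_n)$, so in either case $m(t_0) \ge f^*(s^*) + \alpha|t_0 - s^*|$. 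Combining this with $m(s^*) \le f^*(s^*)$ and the Lipschitz bound $m(s^*) \ge m(t_0) - \alpha|t_0 - s^*|$ forces the whole chain of inequalities to be equalities: $m(s^*) = f^*(s^*)$, so $s^* \in \mathcal{Z}$; and the Lipschitz estimate is saturated, which forces $m$ to be linear with slope $\pm\alpha$ on the interval between $s^*$ and $t_0$. An identical argument rules out $s^* = t_0$.

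To finish, since $(t', t'') \cap \mathcal{Z} = \emptyset$, we must have $s^* \le t'$ or $s^* \ge t''$. In the first case, $t' \in [s^*, t_0]$ and the linear segment through $t'$ has slope $+\alpha$, so $m(t_0) = m(t') + \alpha(t_0-t') = \phi(t_0)$; in the second case, symmetrically, $m(t_0) = \psi(t_0)$. Either way $m(t_0) \ge \phi(t_0) \wedge \psi(t_0)$, which closes the argument. The main obstacle is the careful treatment of $s^*$: the infimum in $m(t_0) = \inf_s \{f(s) + \alpha|t_0-s|\}$ need not be attained, so one has to use one-sided monotone convergence together with the c\`adl\`ag structure of $f$ to convert the asymptotic identity along $(s_n)$ into a genuine equality at the limit $s^*$, and then to exploit the saturated Lipschitz inequality to propagate the value of $m$ through $t'$ (respectively $t''$).
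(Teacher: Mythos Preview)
Your argument is correct and takes a genuinely different route from the paper for the main (sawtooth) claim.  For closedness the two proofs are really the same idea in different clothing: the paper shows directly that the complement is open via an $\epsilon/3$ estimate, while you package the same estimate as lower semicontinuity of $f^* = f \wedge f_-$.  For the shape of $m$ on $(t',t'')$, however, the paper argues by contradiction from the \emph{maximality} of $m$: assuming $m(t_0) < (\phi\wedge\psi)(t_0)$, it bumps $m$ up to $(m(t_0)+\epsilon-\alpha|\cdot-t_0|)\vee m$ and checks this is still an $\alpha$-Lipschitz minorant, contradicting the definition of $m$.  You instead work from the \emph{infimum formula}, extracting a limit point $s^*$ of a minimizing sequence and showing it is a contact point with $m$ linear of slope $\pm\alpha$ between $s^*$ and $t_0$.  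The paper's proof is a couple of lines shorter; yours is more constructive and actually yields a slightly stronger statement (the linear piece of $m$ extends to an honest contact point, possibly beyond $[t',t'']$).

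One small point deserves tightening.  The assertion ``the existence of the minorant forces $(s_n)$ to be bounded'' is not true in general: if $f(t)=-\alpha|t|+e^{-|t|}$ then $m(t)=-\alpha|t|$ and every minimizing sequence for $t_0>0$ runs off to $+\infty$.  What saves you here is the presence of the contact points $t',t''$: for $s\le t'$ one has $f(s)+\alpha|t_0-s|\ge m(s)+\alpha(t_0-s)\ge m(t')-\alpha(t'-s)+\alpha(t_0-s)=\phi(t_0)$, and symmetrically $f(s)+\alpha|t_0-s|\ge\psi(t_0)$ for $s\ge t''$.  So either $m(t_0)\ge\phi(t_0)\wedge\psi(t_0)$ already, or any minimizing sequence is eventually trapped in $(t',t'')$ and your witness argument goes through verbatim with $s^*\in[t',t'']$.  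With that patch your proof is complete.
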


\begin{proof}
We first show that the set 
$\{t \in \reals: m(t) = f(t) \wedge f(t-)\}$ is closed
by showing that its complement is open.  Suppose
$t$ is in the complement, so that 
$f(t) \wedge f(t-) - m(t) =: \epsilon > 0$.  Because
$f$ is c\`adl\`ag and $m$ is continuous, there exists
$\delta > 0$ such that if $|s - t| < \delta$, then
$f(s) > f(t) \wedge f(t-) - \epsilon/3$ and
$m(s) < m(t) + \epsilon/3$. 
Hence, $f(s-) \ge f(t) \wedge f(t-) - \epsilon/3$
and $f(s) \wedge f(s-) - m(s) > \epsilon/3$
for $|s - t| < \delta$, showing that a neighborhood
of $t$ is also in the complement.

Turning to the second claim,
define a function $\tilde m : \reals \to \reals$ by
\[
\tilde m(t) 
:= 
\begin{cases}
f(t') \wedge f(t'-) + \alpha(t - t'), 
&  t \le t^*, \\
f(t'') \wedge f(t''-) + \alpha(t'' - t),
& t^* \le t. \\
\end{cases}
\]
That is, $\tilde m(t) = h^* - \alpha|t - t^*|$, where
\[
h^*
=
(f(t'') \wedge f(t''-) + f(t') \wedge f(t'-) + \alpha (t'' - t'))/2.
\]

Because 
$m(t') = \tilde m(t')$
$m(t'') = \tilde m(t'')$, and 
$m$ is $\alpha$-Lipschitz, we have $m(t) \le \tilde m(t)$
for $t \in [t',t'']$ and $m(t) \ge \tilde m(t)$ for 
$t \notin [t',t'']$.  Suppose for some $t_0 \in (t',t'')$
that $m(t_0) < \tilde m(t_0)$.  We must have that
$m(t_0) - \alpha|t'-t_0| \le m(t') \le f(t') \wedge f(t'-)$ 
and
$m(t_0) - \alpha|t''-t_0| \le m(t'') \le f(t'') \wedge f(t''-)$.
Moreover, both of these inequalities must be strict, because
otherwise we would conclude that $m(t_0) \ge \tilde m(t_0)$.

We can therefore choose $\epsilon > 0$ sufficiently small so that
$m(t_0) + \epsilon - \alpha |t - t_0| < f(t) \wedge f(t-)$ for
$t \in [t',t'']$.  This implies that 
$m(t_0) + \epsilon - \alpha |t - t_0| < \tilde m(t) \le m(t) \le
f(t) \wedge f(t-)$ for $t \notin [t',t'']$.  Thus,
$t \mapsto (m(t_0) + \epsilon - \alpha |t - t_0|) \vee m(t)$
is an $\alpha$-Lipschitz function that is dominated 
everywhere by $f$
and strictly dominates $m$ at the point $t_0$, contradicting
the definition of $m$.
\end{proof}

We have a recipe for finding 
$\inf \{ t>0 : f(t) \wedge f(t-) = m(t) \}$ when
$f$ is a c\`adl\`ag function with
$\alpha$-Lipschitz minorant $m$. Figure~\ref{fig:recipe} gives two
examples of how the recipe applies to different paths (note that the
value of $\alpha$ differs for the two examples).

\begin{lem}
\label{lem:recipe}
Let $f: \reals \to \reals$ be a c\`adl\`ag function with
$\alpha$-Lipschitz minorant $m : \reals \to \reals$. 
Set 
\[
\mathbf{d} := \inf \{ t>0 : f(t) \wedge f(t-) = m(t) \}, 
\]
\[
\mathbf{s} := 
\inf \left\{  
t > 0 : f(t) \wedge f(t-) -  \alpha t
\leq 
\inf\{ f(u) - \alpha u : u \leq 0 \}  
\right\},
\]
and
\[
\mathbf{e} := 
\inf \left\{ t \ge \mathbf{s} : f(t) \wedge f(t-) + \alpha (t-\mathbf{s}) 
= 
\inf \{ f(u) + \alpha (u-\mathbf{s}) : u \geq \mathbf{s}\}  \right\}.
\]
Suppose that
$f(\mathbf{s}) \le f(\mathbf{s}-)$.
Then, $\mathbf{e}=\mathbf{d}$.
\end{lem}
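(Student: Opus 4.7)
The plan is to establish $\mathbf{d} = \mathbf{e}$ by proving the two inequalities $\mathbf{d} \le \mathbf{e}$ and $\mathbf{d} \ge \mathbf{e}$ separately. Throughout, set $c^- := \inf\{f(u) - \alpha u : u \le 0\}$ and $I := \inf\{f(u) + \alpha(u - \mathbf{s}) : u \ge \mathbf{s}\}$, so that $\mathbf{s}$ is the first time $t > 0$ at which $f(t) \wedge f(t-) \le c^- + \alpha t$, and $\mathbf{e}$ is the first time $t \ge \mathbf{s}$ at which $f(t) \wedge f(t-) + \alpha(t - \mathbf{s}) = I$. A key preliminary bound is $I \le c^- + \alpha \mathbf{s}$, and this is where the hypothesis $f(\mathbf{s}) \le f(\mathbf{s}-)$ enters: if instead $f(\mathbf{s}) > c^- + \alpha \mathbf{s}$, then the right-continuity of $f$ at $\mathbf{s}$ combined with $f(\mathbf{s}) \wedge f(\mathbf{s}-) = f(\mathbf{s})$ would force $f(r) \wedge f(r-) > c^- + \alpha r$ on some right-neighborhood $(\mathbf{s}, \mathbf{s}+\delta)$, contradicting the defining infimum for $\mathbf{s}$; thus $f(\mathbf{s}) \le c^- + \alpha\mathbf{s}$, and taking $u = \mathbf{s}$ in the definition of $I$ gives $I \le f(\mathbf{s}) \le c^- + \alpha\mathbf{s}$.

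For $\mathbf{d} \le \mathbf{e}$, I will show that $m(\mathbf{e}) = f(\mathbf{e}) \wedge f(\mathbf{e}-)$. The upper bound $m(\mathbf{e}) \le f(\mathbf{e}) \wedge f(\mathbf{e}-)$ follows from $m \le f$ together with the continuity of the $\alpha$-Lipschitz function $m$. For the matching lower bound, I would apply the tent characterization in Lemma~\ref{L:minorant_explicit} to the specific tent $s \mapsto h - \alpha|s - \mathbf{e}|$ with $h := I - \alpha(\mathbf{e} - \mathbf{s}) = f(\mathbf{e}) \wedge f(\mathbf{e}-)$, verifying that it lies below $f$ pointwise. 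On $s \ge \mathbf{e}$ the tent reduces to the line $\ell(s) := I - \alpha(s - \mathbf{s})$, which is $\le f(s)$ by the definition of $I$; on $\mathbf{s} \le s \le \mathbf{e}$ the tent equals $\ell(s) - 2\alpha(\mathbf{e} - s) \le \ell(s) \le f(s)$; and on $s < \mathbf{s}$ the tent evaluates to $I + \alpha s + \alpha\mathbf{s} - 2\alpha\mathbf{e}$, which by $I \le c^- + \alpha\mathbf{s}$ and $\mathbf{s} \le \mathbf{e}$ is at most $c^- + \alpha s$, and this is $\le f(s)$ by the definition of $c^-$ when $s \le 0$ and by the definition of $\mathbf{s}$ when $0 < s < \mathbf{s}$.

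For $\mathbf{d} \ge \mathbf{e}$, I will show that $m(t) < f(t) \wedge f(t-)$ for every $t \in (0, \mathbf{e})$, using the formula $m(t) = \inf_s\{f(s) + \alpha|t - s|\}$ from Lemma~\ref{L:minorant_explicit}. For $t \in (0, \mathbf{s})$, a sequence $u_n \le 0$ with $f(u_n) - \alpha u_n \to c^-$ yields $f(u_n) + \alpha(t - u_n) \to c^- + \alpha t$, which is strictly below $f(t) \wedge f(t-)$ by the very definition of $\mathbf{s}$, so $m(t) \le c^- + \alpha t < f(t) \wedge f(t-)$. For $t \in [\mathbf{s}, \mathbf{e})$, the strict inequality $f(v) \wedge f(v-) + \alpha(v - \mathbf{s}) > I$ on $[\mathbf{s}, \mathbf{e})$ means any minimizing sequence for $I$ must cluster at or above $\mathbf{e}$; in either of the subcases $f(\mathbf{e}) + \alpha(\mathbf{e} - \mathbf{s}) = I$ (take $v_n \equiv \mathbf{e}$) or $f(\mathbf{e}-) + \alpha(\mathbf{e} - \mathbf{s}) = I$ (take $v_n \uparrow \mathbf{e}$ with $v_n > t$ eventually) one obtains $f(v_n) + \alpha(v_n - t) \to I - \alpha(t - \mathbf{s})$, and this limit is strictly below $f(t) \wedge f(t-)$ by the definition of $\mathbf{e}$.

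The principal obstacle is the tent verification in the region $s < \mathbf{s}$ of the first step, which is the only place where the hypothesis $f(\mathbf{s}) \le f(\mathbf{s}-)$ actually enters (through the bound $I \le c^- + \alpha\mathbf{s}$); the rest of the argument is a careful but routine exploitation of the two infimum definitions.
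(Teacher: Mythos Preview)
Your proof takes essentially the same approach as the paper's: both establish (i) $m(t) < f(t)\wedge f(t-)$ for all $0<t<\mathbf{e}$ and (ii) $m(\mathbf{e}) = f(\mathbf{e})\wedge f(\mathbf{e}-)$, and your tent verification for (ii) via the three regions $s\ge\mathbf{e}$, $\mathbf{s}\le s\le\mathbf{e}$, $s<\mathbf{s}$ mirrors the paper's three-interval estimate almost line for line, with the key inequality $I\le c^{-}+\alpha\mathbf{s}$ playing the same role in both.  One small omission: you tacitly assume $\mathbf{e}>0$ when deducing $\mathbf{d}\le\mathbf{e}$ from (ii), since $\mathbf{e}$ must lie in the defining set $\{t>0:\ldots\}$; the paper records a third claim, $\mathbf{d}>0\Rightarrow\mathbf{e}>0$ (attributed without proof to the sawtooth Lemma~\ref{L:sawtooth}), precisely to cover the boundary case $\mathbf{e}=0$.
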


%%%%% peak negative and peak positive
\begin{figure}%[h]
\begin{center}$
\begin{array}{cc}
\includegraphics[width=2.5in]{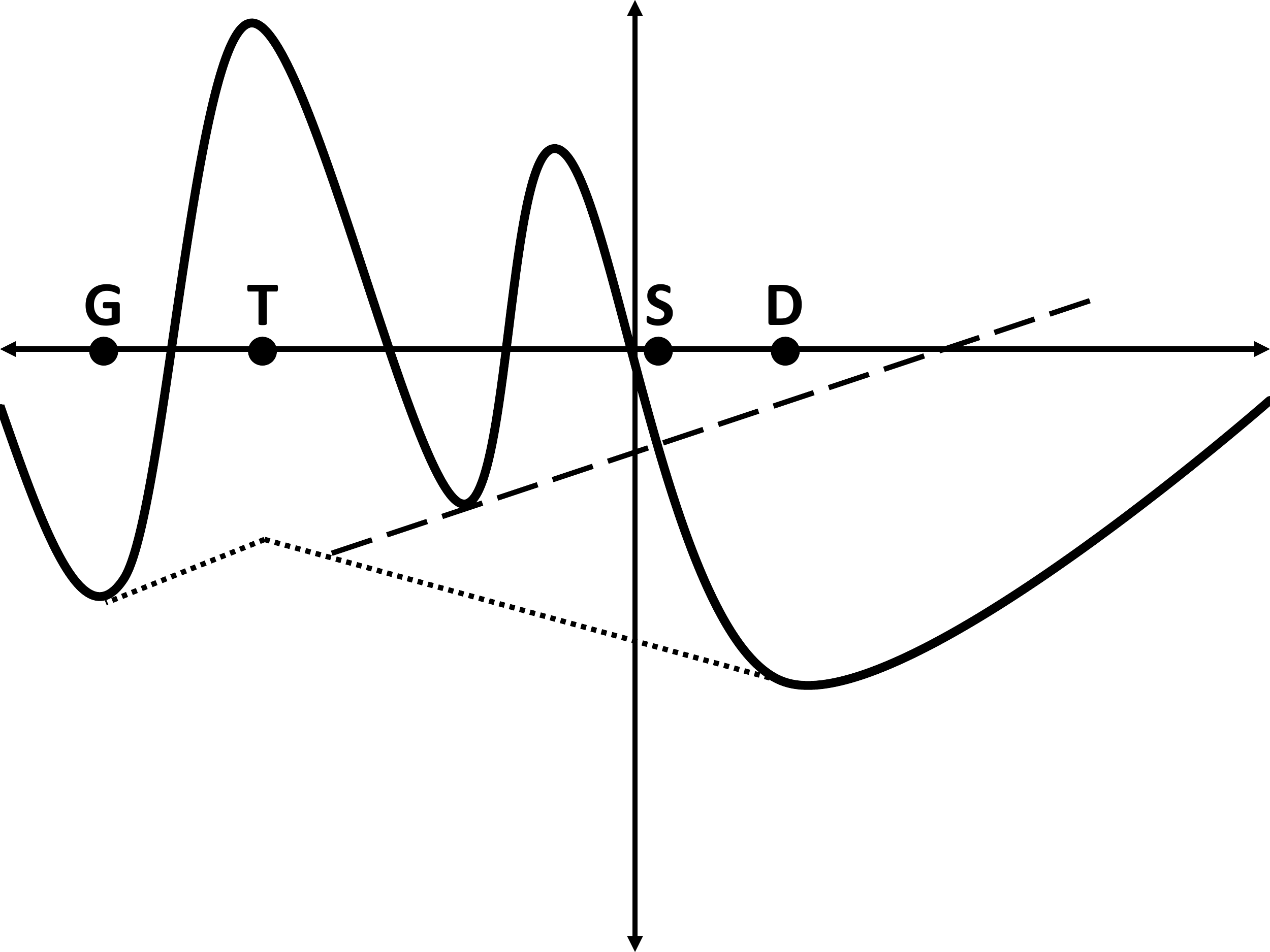} &
\includegraphics[width=2.5in]{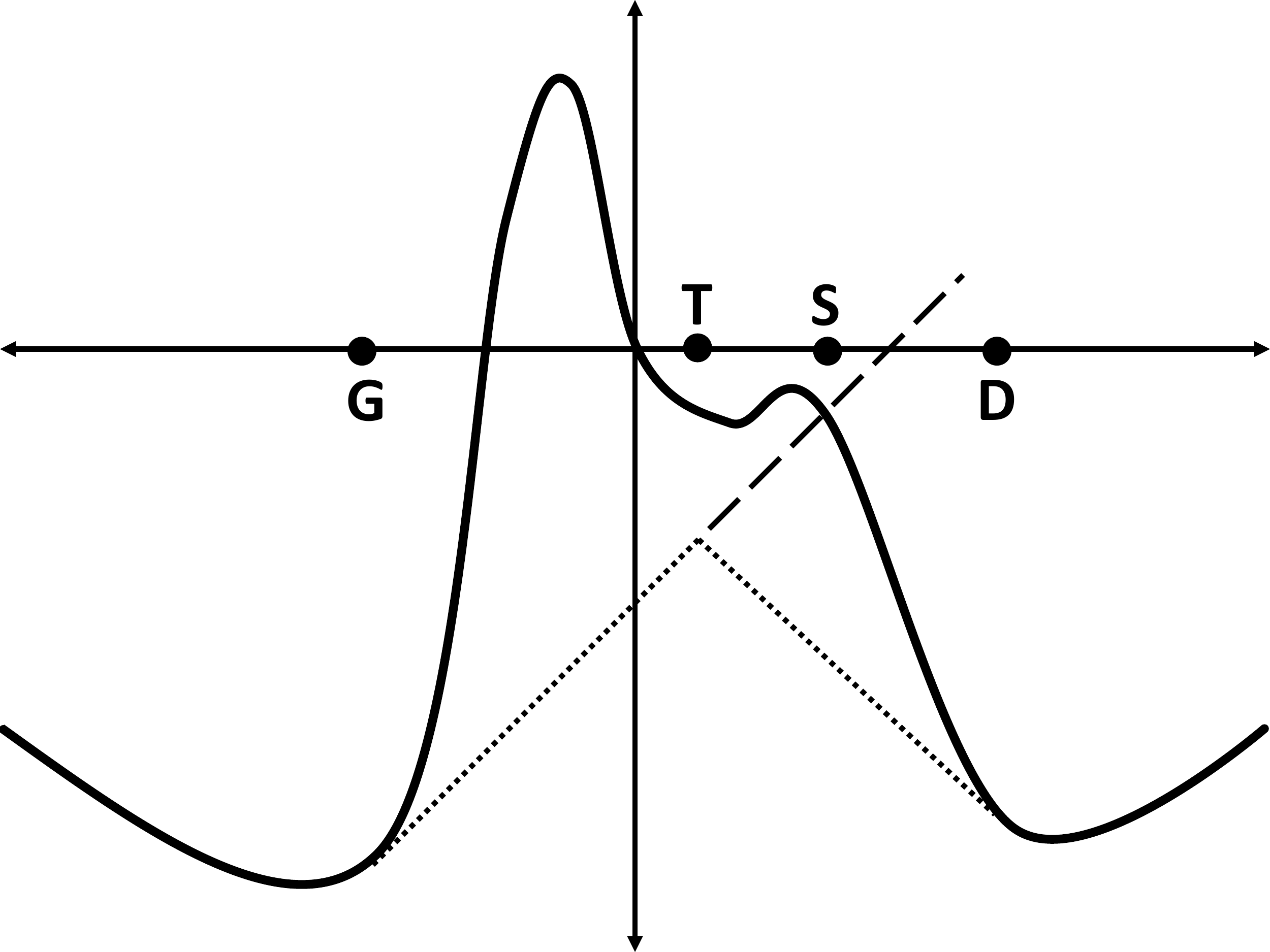}
\end{array}$
\end{center}
\caption{Two instances of the construction of
  Lemma~\ref{lem:recipe}.}
\label{fig:recipe}
\end{figure}

\begin{proof} 
It suffices to show the following:
\begin{equation}
\label{E:determin1}
f(t) \wedge f(t-) > m(t) \; \text{for} \; 0 < t < \mathbf{e},
\end{equation}
\begin{equation}
\label{E:determin2}
f(\mathbf{e}) \wedge f(\mathbf{e}-) \le m(\mathbf{e}),
\end{equation}
\begin{equation}
\label{E:determin3}
\mathbf{d} > 0 \Longrightarrow \mathbf{e} > 0.
\end{equation}

For $0 < t < \mathbf{s}$, it follows from the definition of $\mathbf{s}$ that
\[
\begin{split}
f(t) \wedge f(t-) 
& > 
\inf\{ f(u) - \alpha u : u \leq 0 \} + \alpha t\\
& = 
\inf\{ f(u) + \alpha (t-u) : u \leq 0 \} \\
& \ge
\inf\{ f(u) + \alpha |t-u| : u \in \reals \}
= m(t). \\
\end{split}
\]
For $\mathbf{s} \le t < \mathbf{e}$, it follows from the definition of $\mathbf{e}$ that
\[
f(t) \wedge f(t-) + \alpha (t-\mathbf{s}) > \inf \{ f(u) + \alpha (u-\mathbf{s}) : u \geq \mathbf{s}\},
\] 
and hence 
\[
\begin{split}
f(t) \wedge f(t-) 
& >
\inf\{f(u) + \alpha (u-\mathbf{s}) : u \ge \mathbf{s}\} - \alpha(t-\mathbf{s}) \\
& =  
\inf\{f(u) + \alpha (u-t) : u \ge \mathbf{s}\} \\
& \ge
\inf\{ f(u) + \alpha |t-u| : u \in \reals \}
= m(t). \\
\end{split}
\]
This completes the proof of \eqref{E:determin1}

Now 
$f(\mathbf{e}) \wedge f(\mathbf{e}-)  + \alpha(\mathbf{e} - \mathbf{s}) = \inf\{ f(u) + \alpha (u - \mathbf{s}) : u \ge \mathbf{s}\} $ ,
and so
$f(\mathbf{e}) \wedge f(\mathbf{e}-)  = \inf\{ f(u) + \alpha (u - \mathbf{e}) : u \ge \mathbf{s}\} $ 
This certainly gives 
\begin{equation}
\label{E:first_interval}
f(\mathbf{e}) \wedge f(\mathbf{e}-)  \le \inf\{ f(u) + \alpha |\mathbf{e} - u| : u \ge \mathbf{s}\}.
\end{equation}
Combined with the definition of $\mathbf{s}$,
it also gives
\[
\begin{split}
f(\mathbf{e}) \wedge f(\mathbf{e}-)  + \alpha(\mathbf{e} - \mathbf{s})
& \le 
f(\mathbf{s})  + \alpha(\mathbf{s} - \mathbf{s}) \\
& \le 
\inf\{ f(s) - \alpha s : s \leq 0 \} + \alpha \mathbf{s}. \\
\end{split}
\]
Thus,
$f(\mathbf{e}) \wedge f(\mathbf{e}-)  + 2 \alpha(\mathbf{e} - \mathbf{s})  \le \inf\{ f(s) + \alpha (\mathbf{e} - s) : s \leq 0 \} 
$
and hence, {\em a fortiori},
\begin{equation}
\label{E:second_interval}
f(\mathbf{e}) \wedge f(\mathbf{e}-)  
\le \inf\{ f(s) + \alpha |\mathbf{e} - s| : s \leq 0 \}.
\end{equation}

For $0 < s < \mathbf{s}$,
$f(s)  - s
>
\inf\{ f(r) - \alpha r : r \leq 0 \}$,
and so
\begin{equation}
\label{E:third_interval}
\begin{split}
\inf\{f(s) + \alpha |\mathbf{e} - s| : 0 \le s < \mathbf{s}\}
& =
\inf\{f(s) + \alpha (\mathbf{e} - s) : 0 \le s < \mathbf{s}\} \\
& =
\inf\{f(s) - \alpha s : 0 \le s < \mathbf{s}\} + \alpha \mathbf{e} \\
& \ge
\inf\{ f(r) - \alpha r : r \leq 0 \} + \alpha \mathbf{e} \\
& =
\inf\{ f(r) + \alpha(\mathbf{e} - r) : r \leq 0 \} \\
& =
\inf\{ f(r) + \alpha|\mathbf{e} - r| : r \leq 0 \}. \\
\end{split}
\end{equation}

Combining \eqref{E:first_interval}, \eqref{E:second_interval}
and \eqref{E:third_interval} gives
\eqref{E:determin2}.  

The proof of \eqref{E:determin3}
is a straightforward consequence of Lemma~\ref{L:sawtooth}
and we leave it to the reader.
\end{proof}

\begin{cor}
\label{C:t_s_d}
Let
$f: \reals \to \reals$ be a c\`adl\`ag function with
$\alpha$-Lipschitz minorant $m : \reals \to \reals$.
Define 
$\mathbf{d}$, $\mathbf{s}$, and $\mathbf{e}$ as in
Lemma~\ref{lem:recipe}.  Assume that 
$f(\mathbf{s}) \le f(\mathbf{s}-)$, so that
$\mathbf{e}=\mathbf{d}$.  
Put $\mathbf{g}  := \sup \{ t<0 : f(t) \wedge f(t-) = m(t) \}$
and assume that $f(0) \wedge f(0-) > m(0)$, so that
$f(t) \wedge f(t-) > m(t)$ for $t \in (\mathbf{g},\mathbf{d})$.
Let
$\mathbf{t} := (f(\mathbf{d}) \wedge f(\mathbf{d}-) - f(\mathbf{g}) \wedge f(\mathbf{g}-) 
+ \alpha (\mathbf{d} + \mathbf{g}))/(2 \alpha)$
be the point in $[\mathbf{g}, \mathbf{d}]$ at which the function $m$
achieves its maximum.
Then,
$\mathbf{g} \le \mathbf{t} \le \mathbf{s} \le \mathbf{d}$.
Moreover, if $\mathbf{t}=\mathbf{s}$, then 
$\mathbf{t}=\mathbf{s}=\mathbf{d}$.
\end{cor}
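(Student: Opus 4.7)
The plan is to handle the three inequalities separately and then the moreover statement. The bounds $\mathbf{g} \le \mathbf{t} \le \mathbf{d}$ are immediate from the formula for $\mathbf{t}$ together with Lemma~\ref{L:sawtooth}: the peak of the sawtooth joining the contact points at $\mathbf{g}$ and $\mathbf{d}$ by segments of slopes $+\alpha$ and $-\alpha$ necessarily lies in $[\mathbf{g}, \mathbf{d}]$. The inequality $\mathbf{s} \le \mathbf{d}$ follows because $\mathbf{s} \le \mathbf{e}$ is built into the definition of $\mathbf{e}$, and $\mathbf{e} = \mathbf{d}$ is the conclusion of Lemma~\ref{lem:recipe}.

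For the essential inequality $\mathbf{t} \le \mathbf{s}$, I would separate into two cases. Since $\mathbf{s} \ge 0$ by definition, the case $\mathbf{t} \le 0$ is immediate. The main case is $\mathbf{t} > 0$, in which $\mathbf{g} < 0 < \mathbf{t}$ (the assumption $f(0)\wedge f(0-) > m(0)$ rules out $\mathbf{g} = 0$), and by Lemma~\ref{L:sawtooth} the minorant is linear with slope $+\alpha$ on $[\mathbf{g}, \mathbf{t}]$; in particular $m(u) = m(0) + \alpha u$ for $u \in [\mathbf{g}, \mathbf{t}]$, and $f(\mathbf{g}) \wedge f(\mathbf{g}-) = m(\mathbf{g}) = m(0) + \alpha \mathbf{g}$. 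The key identity I plan to establish is
\[
\inf\{ f(u) - \alpha u : u \le 0 \} = m(0).
\]
The direction ``$\ge$'' uses $f \ge m$ together with the fact that $u \mapsto m(u) - \alpha u$ is non-increasing (since $m$ is $\alpha$-Lipschitz), so $\inf_{u \le 0}(m(u) - \alpha u) = m(0)$. The direction ``$\le$'' is obtained by evaluating near $\mathbf{g}$: if $f(\mathbf{g}) \le f(\mathbf{g}-)$, then $f(\mathbf{g}) - \alpha \mathbf{g} = (f(\mathbf{g}) \wedge f(\mathbf{g}-)) - \alpha \mathbf{g} = m(0)$; otherwise, taking $u_n \uparrow \mathbf{g}$ and using $f(u_n) \to f(\mathbf{g}-) = f(\mathbf{g}) \wedge f(\mathbf{g}-)$ yields $f(u_n) - \alpha u_n \to m(0)$. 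With this identity in hand, for $t \in (0, \mathbf{t}) \subseteq (\mathbf{g}, \mathbf{d})$ the hypothesis $f(t) \wedge f(t-) > m(t) = m(0) + \alpha t$ gives $f(t) \wedge f(t-) - \alpha t > m(0)$, so $\mathbf{s} \ge \mathbf{t}$ follows directly from the definition of $\mathbf{s}$.

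For the moreover statement, I would assume $\mathbf{t} = \mathbf{s}$ and suppose for contradiction that $\mathbf{t} < \mathbf{d}$. Then $\mathbf{s} > 0$, placing us in the case $\mathbf{t} > 0$ above, and $\mathbf{t} \in (\mathbf{g}, \mathbf{d})$. By the definition of $\mathbf{s}$, there is a sequence $s_n \downarrow \mathbf{t}$ (possibly $s_n = \mathbf{t}$) satisfying $f(s_n) \wedge f(s_n-) - \alpha s_n \le m(0)$. The right-continuity of $f$ gives $f(s_n) \to f(\mathbf{t})$ and $f(s_n-) \to f(\mathbf{t})$, hence $f(s_n) \wedge f(s_n-) \to f(\mathbf{t})$; passing to the limit yields $f(\mathbf{t}) \le m(0) + \alpha \mathbf{t} = m(\mathbf{t})$. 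Combined with $f \ge m$ this forces $f(\mathbf{t}) = m(\mathbf{t})$, and then $f(\mathbf{t}) \wedge f(\mathbf{t}-) = m(\mathbf{t})$, putting $\mathbf{t}$ into the contact set and contradicting $\mathbf{t} \in (\mathbf{g}, \mathbf{d})$. Therefore $\mathbf{t} = \mathbf{d}$, and the assumption $\mathbf{t} = \mathbf{s}$ then delivers $\mathbf{t} = \mathbf{s} = \mathbf{d}$.

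The hardest step is verifying $\inf_{u \le 0}(f(u) - \alpha u) = m(0)$ in the case $\mathbf{t} > 0$, because attaining the infimum requires separate treatment of the two possible jump configurations $f(\mathbf{g}) \le f(\mathbf{g}-)$ and $f(\mathbf{g}) > f(\mathbf{g}-)$, the second of which only yields the infimum as a left-sided limit at $\mathbf{g}$. The remaining steps are routine consequences of the sawtooth structure of Lemma~\ref{L:sawtooth} and right-continuity of $f$.
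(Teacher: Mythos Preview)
Your proposal is correct and follows essentially the same approach as the paper. Both arguments reduce to the case $\mathbf{t} \ge 0$, establish the key identity $\inf\{f(u) - \alpha u : u \le 0\} = f(\mathbf{g}) \wedge f(\mathbf{g}-) - \alpha \mathbf{g} = m(0)$, and use it together with the definition of $\mathbf{s}$; the paper frames the conclusion as a contradiction (if $\mathbf{s} < \mathbf{t}$ then $f(\mathbf{s}) \wedge f(\mathbf{s}-) \le m(\mathbf{s})$, contradicting $\mathbf{s} < \mathbf{d}$), while you argue directly that the defining inequality for $\mathbf{s}$ fails on $(0,\mathbf{t})$. The paper avoids your case split on $f(\mathbf{g})$ versus $f(\mathbf{g}-)$ by passing to $\inf_{u \le 0}\{f(u)\wedge f(u-) - \alpha u\}$, which for c\`adl\`ag $f$ equals $\inf_{u \le 0}\{f(u) - \alpha u\}$ and is visibly attained at $u=\mathbf{g}$.

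One small point: in the ``moreover'' part your assertion ``Then $\mathbf{s} > 0$'' is neither justified nor needed. From $\mathbf{t} = \mathbf{s} \ge 0$ you already have $\mathbf{t} \ge 0$, and the identity $\inf_{u \le 0}(f(u) - \alpha u) = m(0)$ holds in that range by the same reasoning as in your main case; the rest of your limiting argument then goes through unchanged (and in fact if $\mathbf{t}=\mathbf{s}=0$ it yields $f(0) \le m(0)$, directly contradicting the hypothesis). The paper, for its part, simply says ``a similar argument'' for this clause, so your write-up is more explicit here.
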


\begin{proof}  
We first show that 
$\mathbf{g} \le \mathbf{t} \le \mathbf{s} \le \mathbf{d}$.
We certainly have $\mathbf{g} \le \mathbf{s} \le \mathbf{d}$
and $\mathbf{g} \le \mathbf{t} \le \mathbf{d}$, so it suffices to
prove that $\mathbf{t} \le \mathbf{s}$.  Because $\mathbf{s} \ge 0$, 
this is clear when $\mathbf{t} < 0$,
so it further suffices to consider the case where $\mathbf{t} \ge 0$.  Suppose, then,
that $\mathbf{g} \le 0 \le \mathbf{s} < \mathbf{t} \le \mathbf{d}$. 
 
From Lemma~\ref{L:sawtooth} we have $m(u) = f(\mathbf{g}) \wedge f(\mathbf{g}-) + \alpha(u - \mathbf{g})$
for $\mathbf{g} \le u \le \mathbf{t}$ and
$f(u) \wedge f(u-) \ge f(\mathbf{g}) \wedge f(\mathbf{g}-) + \alpha(u - \mathbf{g})$
for $u \le \mathbf{t}$.  Therefore, 
$\inf\{f(u) \wedge f(u-) - \alpha u : u \le 0\} \ge f(\mathbf{g}) \wedge f(\mathbf{g}-) -  \mathbf{g}$,
and hence 
$\inf\{f(u) \wedge f(u-) - \alpha u : u \le 0\} 
= f(\mathbf{g}) \wedge f(\mathbf{g}-) -  \alpha \mathbf{g}$.
Now, by definition of $\mathbf{s}$,
$f(\mathbf{s}) \wedge f(\mathbf{s-}) - \alpha \mathbf{s} 
\le 
\inf\{f(u) \wedge f(u-) - \alpha u : u \le 0\}$,
and so
\[
\begin{split}
f(\mathbf{s}) \wedge f(\mathbf{s-}) 
& \le f(\mathbf{g}) \wedge f(\mathbf{g}-) -  \alpha \mathbf{g} + \alpha \mathbf{s} \\
& = f(\mathbf{g}) \wedge f(\mathbf{g}-) +  \alpha (\mathbf{s} - \mathbf{g}) \\
& = m(\mathbf{s}), \\
\end{split}
\]
which contradicts $\mathbf{d} = \inf\{u > 0 : f(u) \wedge f(u-) = m(u)\}
= \inf\{u > 0 : f(u) \wedge f(u-) \le m(u)\}$ unless 
$\mathbf{s}=0$ and $f(0) \wedge f(0-) = m(0)$, but we have
assumed that this is not the case.

A similar argument shows that if
$\mathbf{t}=\mathbf{s}$, then 
$\mathbf{t}=\mathbf{s}=\mathbf{d}$.

\end{proof}

\def\cprime{$'$}
\providecommand{\bysame}{\leavevmode\hbox to3em{\hrulefill}\thinspace}
\providecommand{\MR}{\relax\ifhmode\unskip\space\fi MR }
% \MRhref is called by the amsart/book/proc definition of \MR.
\providecommand{\MRhref}[2]{%
  \href{http://www.ams.org/mathscinet-getitem?mr=#1}{#2}
}
\providecommand{\href}[2]{#2}


\begin{thebibliography}{APRUB11}

\bibitem[APRUB11]{ECP2011-38}
J.~Abramson, J.~Pitman, N.~Ross, and G.~Uribe~Bravo, \emph{Convex minorants of
  random walks and {L}\'evy processes}, Electronic Communications in
  Probability \textbf{16} (2011), 423--434.

\bibitem[Bal77]{MR0452694}
E.~J. Balder, \emph{An extension of duality-stability relations to nonconvex
  optimization problems}, SIAM J. Control Optimization \textbf{15} (1977),
  no.~2, 329--343. \MR{0452694 (56 \#10973)}

\bibitem[Bas84]{MR770946}
Richard~F. Bass, \emph{Markov processes and convex minorants}, Seminar on
  probability, {XVIII}, Lecture Notes in Math., vol. 1059, Springer, Berlin,
  1984, pp.~29--41. \MR{770946 (86d:60086)}

\bibitem[Ber96]{bertoin}
Jean Bertoin, \emph{L\'evy processes}, Cambridge Tracts in Mathematics, vol.
  121, Cambridge University Press, Cambridge, 1996. \MR{1406564 (98e:60117)}

\bibitem[Ber97a]{bertoin_regen}
\bysame, \emph{Regenerative embedding of {M}arkov sets}, Probab. Theory Related
  Fields \textbf{108} (1997), no.~4, 559--571. \MR{1465642 (98h:60013)}

\bibitem[Ber97b]{bertoin97}
\bysame, \emph{Regularity of the half-line for {L}\'evy processes}, Bull. Sci.
  Math. \textbf{121} (1997), no.~5, 345--354. \MR{1465812 (98m:60115)}

\bibitem[Ber00]{MR1747095}
\bysame, \emph{The convex minorant of the {C}auchy process}, Electron. Comm.
  Probab. \textbf{5} (2000), 51--55 (electronic). \MR{1747095 (2001i:60081)}

\bibitem[BS02]{handbook}
Andrei~N. Borodin and Paavo Salminen, \emph{Handbook of {B}rownian
  motion---facts and formulae}, second ed., Probability and its Applications,
  Birkh\"auser Verlag, Basel, 2002. \MR{1912205 (2003g:60001)}

\bibitem[CD01]{MR1891744}
Chris Carolan and Richard Dykstra, \emph{Marginal densities of the least
  concave majorant of {B}rownian motion}, Ann. Statist. \textbf{29} (2001),
  no.~6, 1732--1750. \MR{1891744 (2003c:62030)}

\bibitem[{\c{C}}in92]{MR1145458}
Erhan {\c{C}}inlar, \emph{Sunset over {B}rownistan}, Stochastic Process. Appl.
  \textbf{40} (1992), no.~1, 45--53. \MR{1145458 (93f:60120)}

\bibitem[Die88]{MR955448}
H.~Dietrich, \emph{Zur {$c$}-{K}onvexit\"at und {$c$}-{S}ubdifferenzierbarkeit
  von {F}unktionalen}, Optimization \textbf{19} (1988), no.~3, 355--371.
  \MR{955448 (89j:90268)}

\bibitem[Don07]{doney}
Ronald~A. Doney, \emph{Fluctuation theory for {L}\'evy processes}, Lecture
  Notes in Mathematics, vol. 1897, Springer, Berlin, 2007, Lectures from the
  35th Summer School on Probability Theory held in Saint-Flour, July 6--23,
  2005, Edited and with a foreword by Jean Picard. \MR{2320889 (2008m:60085)}

\bibitem[EN74]{MR0348591}
Karl-Heinz Elster and Reinhard Nehse, \emph{Zur {T}heorie der
  {P}olarfunktionale}, Math. Operationsforsch. Statist. \textbf{5} (1974),
  no.~1, 3--21. \MR{0348591 (50 \#1088)}

\bibitem[Fag09]{faggionato}
Alessandra Faggionato, \emph{The alternating marked point process of
  {$h$}-slopes of drifted {B}rownian motion}, Stochastic Process. Appl.
  \textbf{119} (2009), no.~6, 1765--1791. \MR{2519344 (2010e:60175)}

\bibitem[Fou98]{fourati}
S.~Fourati, \emph{Points de croissance des processus de {L}\'evy et th\'eorie
  g\'en\'erale des processus}, Probab. Theory Related Fields \textbf{110}
  (1998), no.~1, 13--49. \MR{1602032 (99e:60164)}

\bibitem[FT88]{fitztaksar}
P.~J. Fitzsimmons and Michael Taksar, \emph{Stationary regenerative sets and
  subordinators}, Ann. Probab. \textbf{16} (1988), no.~3, 1299--1305.
  \MR{942770 (89m:60176)}

\bibitem[Gro83]{MR714964}
Piet Groeneboom, \emph{The concave majorant of {B}rownian motion}, Ann. Probab.
  \textbf{11} (1983), no.~4, 1016--1027. \MR{714964 (85h:60119)}

\bibitem[GS74]{MR0334335}
R.~K. Getoor and M.~J. Sharpe, \emph{Last exit decompositions and
  distributions}, Indiana Univ. Math. J. \textbf{23} (1973/74), 377--404.
  \MR{0334335 (48 \#12654)}

\bibitem[HJL92a]{MR1168181}
Pierre Hansen, Brigitte Jaumard, and Shi-Hui Lu, \emph{Global optimization of
  univariate {L}ipschitz functions. {I}. {S}urvey and properties}, Math.
  Programming \textbf{55} (1992), no.~3, Ser. A, 251--272. \MR{1168181
  (93f:90149)}

\bibitem[HJL92b]{MR1168182}
\bysame, \emph{Global optimization of univariate {L}ipschitz functions. {II}.
  {N}ew algorithms and computational comparison}, Math. Programming \textbf{55}
  (1992), no.~3, Ser. A, 273--292. \MR{1168182 (93f:90150)}

\bibitem[HT93]{MR1274246}
Reiner Horst and Hoang Tuy, \emph{Global optimization}, second ed.,
  Springer-Verlag, Berlin, 1993, Deterministic approaches. \MR{1274246
  (94m:90004)}

\bibitem[HU80a]{MR593233}
J.-B. Hiriart-Urruty, \emph{Extension of {L}ipschitz functions}, J. Math. Anal.
  Appl. \textbf{77} (1980), no.~2, 539--554. \MR{593233 (83i:58013)}

\bibitem[HU80b]{MR600082}
\bysame, \emph{Lipschitz {$r$}-continuity of the approximate subdifferential of
  a convex function}, Math. Scand. \textbf{47} (1980), no.~1, 123--134.
  \MR{600082 (82c:58007)}

\bibitem[Kal02]{MR1876169}
Olav Kallenberg, \emph{Foundations of modern probability}, second ed.,
  Probability and its Applications (New York), Springer-Verlag, New York, 2002.
  \MR{1876169 (2002m:60002)}

\bibitem[Lev03]{MR2027382}
V.~L. Levin, \emph{Abstract convexity in measure theory and in convex
  analysis}, J. Math. Sci. (N. Y.) \textbf{116} (2003), no.~4, 3432--3467,
  Optimization and related topics, 4. \MR{2027382 (2004k:90163)}

\bibitem[Luc09]{MR2496900}
Yves Lucet, \emph{What shape is your conjugate? {A} survey of computational
  convex analysis and its applications}, SIAM J. Optim. \textbf{20} (2009),
  no.~1, 216--250. \MR{2496900 (2010e:52036)}

\bibitem[Mai71]{maisonregen2}
B.~Maisonneuve, \emph{Ensembles r\'eg\'en\'eratifs, temps locaux et
  subordinateurs}, S\'eminaire de {P}robabilit\'es, {V} ({U}niv. {S}trasbourg,
  ann\'ee universitaire 1969--1970), Springer, Berlin, 1971, pp.~147--169.
  Lecture Notes in Math., Vol. 191. \MR{0448586 (56 \#6892)}

\bibitem[Mai83]{maisonregen1}
\bysame, \emph{Ensembles r\'eg\'en\'eratifs de la droite}, Z. Wahrsch. Verw.
  Gebiete \textbf{63} (1983), no.~4, 501--510. \MR{705620 (85g:60076)}

\bibitem[Mil73]{MR0321198}
P.~W. Millar, \emph{Exit properties of stochastic processes with stationary
  independent increments}, Trans. Amer. Math. Soc. \textbf{178} (1973),
  459--479. \MR{0321198 (47 \#9731)}

\bibitem[Mil77a]{millarzeroone}
\bysame, \emph{Zero-one laws and the minimum of a {M}arkov process}, Trans.
  Amer. Math. Soc. \textbf{226} (1977), 365--391. \MR{0433606 (55 \#6579)}

\bibitem[Mil77b]{millar_77}
P.~Warwick Millar, \emph{Random times and decomposition theorems}, Probability
  ({P}roc. {S}ympos. {P}ure {M}ath., {V}ol. {XXXI}, {U}niv. {I}llinois,
  {U}rbana, {I}ll., 1976), Amer. Math. Soc., Providence, R. I., 1977,
  pp.~91--103. \MR{0443109 (56 \#1482)}

\bibitem[Mil78]{millarpostmin}
P.~W. Millar, \emph{A path decomposition for {M}arkov processes}, Ann.
  Probability \textbf{6} (1978), no.~2, 345--348. \MR{0461678 (57 \#1663)}

\bibitem[NO05]{MR2217473}
V.~I. Norkin and B.~O. Onishchenko, \emph{Minorant methods of stochastic global
  optimization}, Kibernet. Sistem. Anal. \textbf{41} (2005), no.~2, 56--70,
  189. \MR{2217473 (2007a:90053)}

\bibitem[NP89a]{pitmanneveu1}
J.~Neveu and J.~Pitman, \emph{Renewal property of the extrema and tree property
  of the excursion of a one-dimensional {B}rownian motion}, S\'eminaire de
  {P}robabilit\'es, {XXIII}, Lecture Notes in Math., vol. 1372, Springer,
  Berlin, 1989, pp.~239--247. \MR{1022914 (91e:60239)}

\bibitem[NP89b]{pitmanneveu2}
J.~Neveu and J.~W. Pitman, \emph{The branching process in a {B}rownian
  excursion}, S\'eminaire de {P}robabilit\'es, {XXIII}, Lecture Notes in Math.,
  vol. 1372, Springer, Berlin, 1989, pp.~248--257. \MR{1022915 (91e:60240)}

\bibitem[Pit83]{MR733673}
J.~W. Pitman, \emph{Remarks on the convex minorant of {B}rownian motion},
  Seminar on stochastic processes, 1982 ({E}vanston, {I}ll., 1982), Progr.
  Probab. Statist., vol.~5, Birkh\"auser Boston, Boston, MA, 1983,
  pp.~219--227. \MR{733673 (85f:60119)}

\bibitem[PR69]{pechrogo}
E.~A. Pecherskii and B.~A. Rogozin, \emph{On joint distributions of random
  variables associated with fluctuations of a process with independent
  increments}, Theory of Probability and its Applications \textbf{14} (1969),
  no.~3, 410--423.

\bibitem[PS72]{MR0297019}
A.~O. Pittenger and C.~T. Shih, \emph{Coterminal families and the strong
  {M}arkov property}, Bull. Amer. Math. Soc. \textbf{78} (1972), 439--443.
  \MR{0297019 (45 \#6077)}

\bibitem[PU11]{pitmanbravo}
J.~{Pitman} and G.~{Uribe Bravo}, \emph{The convex minorant of a {L}\'evy
  process}, Ann. Probab. (2011), To appear.

\bibitem[Rog68]{rogozin}
B.~A. Rogozin, \emph{The local behavior of processes with independent
  increments}, Teor. Verojatnost. i Primenen. \textbf{13} (1968), 507--512.
  \MR{0242261 (39 \#3593)}

\bibitem[RR98]{MR1619170}
Svetlozar~T. Rachev and Ludger R{\"u}schendorf, \emph{Mass transportation
  problems. {V}ol. {I}}, Probability and its Applications (New York),
  Springer-Verlag, New York, 1998, Theory. \MR{1619170 (99k:28006)}

\bibitem[RW87]{rogerswills}
L.~C.~G. Rogers and David Williams, \emph{Diffusions, {M}arkov processes, and
  martingales. {V}ol. 2}, Wiley Series in Probability and Mathematical
  Statistics: Probability and Mathematical Statistics, John Wiley \& Sons Inc.,
  New York, 1987, It{\^o} calculus. \MR{921238 (89k:60117)}

\bibitem[RW98]{MR1491362}
R.~Tyrrell Rockafellar and Roger J.-B. Wets, \emph{Variational analysis},
  Grundlehren der Mathematischen Wissenschaften [Fundamental Principles of
  Mathematical Sciences], vol. 317, Springer-Verlag, Berlin, 1998. \MR{1491362
  (98m:49001)}

\bibitem[{\v{S}}ta65]{shtatland}
E.~S. {\v{S}}tatland, \emph{On local properties of processes with independent
  increments}, Teor. Verojatnost. i Primenen. \textbf{10} (1965), 344--350.
  \MR{0183022 (32 \#504)}

\bibitem[Sui01]{suidan}
T.~M. Suidan, \emph{Convex minorants of random walks and {B}rownian motion},
  Teor. Veroyatnost. i Primenen. \textbf{46} (2001), no.~3, 498--512.
  \MR{MR1978665 (2004d:60097)}

\bibitem[Vig]{vigon_unpublished}
Vincent Vigon, \emph{D\'eriv\'ees de dini des processus de l\'evy},
  \url{http://www-irma.u-strasbg.fr/~vigon/boulot/pas_publication/fichiers/con%
jecture.pdf}.

\bibitem[Vig02]{MR1875147}
\bysame, \emph{Votre {L}\'evy rampe-t-il?}, J. London Math. Soc. (2)
  \textbf{65} (2002), no.~1, 243--256. \MR{1875147 (2002i:60101)}

\bibitem[Vig03]{abrupt}
\bysame, \emph{Abrupt {L}\'evy processes}, Stochastic Process. Appl.
  \textbf{103} (2003), no.~1, 155--168. \MR{1947963 (2003m:60124)}

\bibitem[Vil09]{MR2459454}
C{\'e}dric Villani, \emph{Optimal transport}, Grundlehren der Mathematischen
  Wissenschaften [Fundamental Principles of Mathematical Sciences], vol. 338,
  Springer-Verlag, Berlin, 2009, Old and new. \MR{2459454 (2010f:49001)}

\end{thebibliography}
\end{document}